\titleformat{\section}{\normalfont\Large\bfseries}{\thesection.}{0.5em}{}
\titleformat{\subsection}{\normalfont\large\bfseries}{\thesection.\thesubsection.}{0.5em}{}
\titleformat{\subsubsection}{\normalfont\bfseries}{\thesection.\thesubsection.\thesubsubsection.}{0.5em}{}
\newtheorem{thm}[equation]{Theorem}
\newtheorem{lem}[equation]{Lemma}
\newtheorem{prop}[equation]{Proposition}
\newcommand{\thmref}[1]{theorem~\ref{#1}}
\newcommand{\lemref}[1]{lemma~\ref{#1}}
\newcommand{\secref}[1]{section~\ref{#1}}
\DeclareMathOperator{\Ad}{Ad}
\numberwithin{equation}{section}
\renewcommand\a{\alpha}
\newcommand\g{\gamma}
\renewcommand\d{\delta}
\renewcommand\l{\lambda}
\newcommand\s{\sigma}
\newcommand\D{\Delta}
\newcommand\fa{\mathfrak a}
\newcommand\fb{\mathfrak b}
\newcommand\fn{\mathfrak n}
\renewcommand\D{\Delta}
\newcommand\G{\Gamma}
\newcommand\f{\frac}
\newcommand\smallf[2]{{\textstyle{\frac{#1}{#2}}}}
\newcommand\srel[2]{\begin{smallmatrix} {#1} \\ {#2} \end{smallmatrix}}
\newcommand{\N}{{\mathbb{N}}}
\newcommand{\Z}{{\mathbb{Z}}}
\newcommand{\R}{{\mathbb{R}}}
\newcommand{\C}{{\mathbb{C}}}
\newcommand{\A}{{\mathbb{A}}}
\newcommand{\Q}{{\mathbb{Q}}}
\newcommand{\Sch}{{\mathcal{S}}}
\renewcommand\Re{\text{Re~}}
\newcommand{\supp}{\operatorname{supp}}
\renewcommand\i{^{-1}}
\renewcommand\({\left(}
\renewcommand\){\right)}
\newcommand{\ttwo}[4]{
\(\begin{smallmatrix}{#1} & {#2}
\\ {#3} & {#4} \end{smallmatrix}\)}
\newcommand{\tthree}[9]{
\(\begin{smallmatrix}{#1} & {#2} & {#3}
\\ {#4} & {#5} & {#6} \\ {#7} & {#8} & {#9} \end{smallmatrix}\)}
\newcommand{\sgn}{\operatorname{sgn}}
\newcommand{\gobble}[1]{}
  \newcommand{\rangeref}[2]{%
    \ref{#1}--\afterassignment\gobble\fam 0\ref{#2}%
  }
\def\imod#1{\allowbreak\mkern10mu({\operator@font mod}\,\,#1)}
\begin{document}

\markboth{ A general Voronoi summation formula for  $GL(n,\Z)$} {Stephen D. Miller and Wilfried Schmid}

\title{\bf A general Voronoi summation formula for  $GL(n,\Z)$}
\author{Stephen D. Miller\thanks{Partially supported
by NSF grant DMS-0601009} \ and Wilfried Schmid\thanks{Partially supported by DARPA grant HR0011-04-1-0031 and NSF
grant DMS-0500922}}
\date{November 29, 2009}
\maketitle

\setcounter{page}{1}

\begin{abstract}\vskip 3mm\footnotesize
\noindent In \cite{voronoi} we derived an analogue of the classical Voronoi summation formula for automorphic forms
on $GL(3)$, by using the theory of automorphic distributions.  The purpose of the present paper is to apply this
theory to derive the analogous formulas for $GL(n)$.

\vskip 4.5mm

\noindent {\bf 2010 Mathematics Subject Classification:} 11F30, 11F55, 11Mxx, 46F20.

\noindent {\bf Keywords and Phrases:} automorphic forms, Voronoi summation, summation formulas, cusp forms,
$L$-functions, automorphic distributions.
\end{abstract}

\vskip 12mm
\begin{flushright}
\emph{Dedicated to Shing-Tung Yau on the occasion of his 60th birthday}
\end{flushright}

\section{Introduction}\label{sec:intro}

The Voronoi summation formulas for $GL(2)$ and $GL(3)$ have had numerous applications to problems in analytic
number theory, perhaps most notably to recent subconvexity results (e.g. \cite{BHM,DFI,KMV,li,sarnak}).  The
formulas provide an identity between sums of the form
\begin{equation}\label{voronoiabstractform}
    \sum_{m\,\in\,\Z} \ a_m \, e^{2\pi i m \a}\, f(m) \ \ = \ \ \sum_{m\,\in\,\Z} \ \tilde{a}_m \, S(m,\a)\, F(m)\,
\end{equation}
where $a_m$ are Fourier coefficients of the automorphic form, $\a\in \Q$, $S(m,\a)$ an exponential sum, and $f$,
$F$ a pair of test functions related by an integral transformation.  Indeed, such a rubric covers the Poisson
summation formula, itself a cornerstone tool in analytic number theory. For $GL(2)$ the exponential sum is a single
exponential, whereas for $GL(3)$ it is a Kloosterman sum. One way to prove the $GL(2)$ formula is to use Mellin
inversion of the functional equation of the standard $L$-function with twists. An analytic variant of this method,
carried out by Duke-Iwaniec \cite{DuIwa}, involves an $n-1$-dimensional hyperkloosterman sum. That could be
regarded as predicting the appearance of a hyperkloosterman sum in the Voronoi formula for $GL(n)$. However, the
Mellin-inversion approach quickly runs into computational difficulties.

The argument that we follow here closely follows the one in \cite{voronoi}, except that it is done for general
$GL(n)$, and not just $n=3$ as was the case there.  In particular, all of the analytic details necessary to justify
that formula are essentially covered in \cite{voronoi}, though the formal aspects of the computation~-- while
philosophically identical~-- are more involved, of course.  We first execute the computation in classical terms,
entirely analogously to the $GL(3)$ argument in \cite{voronoi}, and then later perform the computation adelically.
The latter has the advantage of providing a formula for general congruence groups.

Fourier coefficients of automorphic forms on $GL(n)$ are indexed by $(n-1)$-tuples
$k=(k_1,\ldots,k_{n-1})\in\Z^{n-1}$; for a cusp form, the coefficients vanish unless $k\in (\Z_{\neq 0})^{n-1}$.
For full level Hecke cusp forms, the coefficients are uniquely determined by the $L$-function coefficients
$\{a_{(1,1,\ldots,1,k_{n-1})}\}$, or dually, by the coefficients $\{a_{(k_1,1,1,\ldots,1)}\}$.  That is a
consequence of fairly complicated Hecke relations. However, the Voronoi formulas we state hold even for non-Hecke
eigenforms, and our proof does not require the Hecke property. When the full-level assumption is dropped, there are
more Fourier coefficients to take into account.  For $GL(2)$ this is explained by the Jacquet-Langlands and
Atkin-Lehner theory, but in the absence of a satisfactory theory of this type for $GL(n)$, one cannot at present
pin down the general Fourier coefficients for $GL(n)$ in terms of the $L$-function coefficients. Thus we state the
formulas for full level, although our second, adelic proof produces a general formula once one has further
information of Atkin-Lehner type.

We say that $(\l,\d)\in\C^n\times (\Z/2\Z)^n$ is the representation parameter of a cusp form on $GL(n)$ if its
archimedean component embeds\begin{footnote}{The Casselman embedding theorem \cite{Casselman:1980} guarantees that
such an embedding exists.}\end{footnote} into the principal series representation
\begin{equation}
\label{Vld}
\begin{aligned}
    V_{\l,\d} \ \ &= \ \ \bigg \{\ f:GL(n,\R)\rightarrow\C \ \ \bigg|
    \\
    & \ \ \ \ f\!\(g\(\begin{smallmatrix} a_1 & 0 & 0 \\ \star & \ddots & 0 \\ \star & \star & a_n \\ \end{smallmatrix}\)\)=f(g)\cdot\!\!\prod_{1 \le j\le n}\(|a_j|^{\f{n+1}{2}-j-\l_j}\sgn(a_j)^{\d_j} \)\bigg\}\,.
\end{aligned}
\end{equation}
We do not assume that the archimedean component is a full principal series representation; in particular, it need
not belong to the spherical principal series. The parameter $(\l,\d)\in\C^n\times (\Z/2\Z)^n$ may not be uniquely
determined by the archimedean component:~indeed, when the archimedean component is an irreducible principal series
representation, the $\l_j\,,\d_j$ can be (simultaneously) freely permuted, a fact which provides useful flexibility
in the range of test functions allowed in the Voronoi formula. Except for that flexibility, our formula is
independent of the parameters up to permutation, as will be clear from its statement below.

Another ingredient of the Voronoi formula is the integral transform relating the two test functions; in the
prototypical example of the Poisson summation formula the Fourier transform plays that role.  We shall give two
descriptions here, entirely analogous to those in \cite{inforder} and \cite{voronoi}.  The first is more concise
but somewhat symbolic, in that it needs to be interpreted carefully to have meaning.  Let $e(u)=e^{2\pi i u}$, and
suppose that $f\in |x|^{\l_n}\sgn(x)^{\d_n}\Sch(\R)$, where $\Sch(\R)$ denotes the space of Schwartz functions on
the real line. The transform $F$ of $f$ is then defined as
\begin{equation}
\label{Ffromf}
    F(y) \ \ = \ \ \int_{\R^n}f\(\f{x_1\cdots x_n}{y}\)\cdot \prod_{1\leq j\le n}\big(e(-x_j)|x_j|^{-\l_j}\sgn(x_j)^{\d_j}dx_j\big),
\end{equation}
with an inversion formula that involves replacing the $\l_j$ with $-\l_j$.  Evidently this formula can be regarded
as a generalization of the Fourier transform. For details on how the formula needs to be interpreted see
\cite[\S5]{voronoi}.

The second, equivalent description of the transform $f\mapsto F$ relates the signed Mellin transform
\begin{equation}
\label{signedmell}
    M_\d f(s) \ =_{\text{\,def}} \ \int_\R f(x)|x|^{s-1}\sgn(x)^\d\,dx \qquad \big(\, \d \in \Z/2\Z\,\big)
\end{equation}
of $F(s)$ to that of $f(-s)$\,:
\begin{equation}
\label{MfMF}
   M_\d F(s) \ \ = \ \  (-1)^{n\d} \, \bigg(\, \prod_{1\leq j\le
n}G_{\d_j+\d}(s-\l_j+1)\bigg) M_\d f(-s)\,;
\end{equation}
here $G_\d(s)$, with $\d\in \Z/2\Z$, denotes the Gamma factor
\begin{equation}
\label{Gds}
    G_\d(s) \ \ = \ \ \begin{cases}2\,(2\pi)^{-s}\,\Gamma(s)\cos(\textstyle\f{\pi s}2)\ \ \ &\text{if}\ \ \d = 0 \\ 2\,i\,(2\pi)^{-s}\,\Gamma(s)\sin(\textstyle\f{\pi s}2)\ \ \ &\text{if}\ \ \d = 1\end{cases}
\end{equation} that was introduced in \cite[\S4]{inforder}. One can reconstruct the odd and even components of $F$ from the formula (\ref{MfMF}) by means of the signed Mellin inversion formula
\begin{equation}
\label{signedmellinv}
\begin{aligned}
    &\f {\big( f(x) + (-1)^\d f(-x)\big)}2 \ \ =
     \\
     &\qquad\qquad = \ \ \f{(\sgn x)^\d}{4\pi i }\! \int_{\Re{s}=s_0} \!\! M_\d f(s)|x|^{-s}\,ds \ \ \ \ \big(\,\Re{s_0}\gg 0\,\big)\,.
\end{aligned}
\end{equation}
This latter description of the transform $f \mapsto F$ is less suggestive than (\ref{Ffromf}), but more useful in
applications. One can argue as in \cite[\S4]{voronoi} that
\begin{equation}
\label{MfMF1} f\in |x|^{\l_n}\sgn(x)^{\d_n}\Sch(\R)\ \ \Longrightarrow \ \ F\in {\sum}_{1\leq j\le
n}\,|x|^{-\l_j}\sgn(x)^{\d_j}\Sch(\R)\,.
\end{equation}
In the singular cases where two or more of the $\l_j$ differ by an integer, the above formula must be interpreted
as including powers of $\log|x|$ -- see \cite[\S6]{inforder} for details.  It is important to note that this only
affects the asymptotics of the functions $f$ and $F$ near zero, a point at which they are never directly evaluated
in our formula below. We had remarked that the components $(\l_j,\d_j)$ of the parameter $(\l,\d)$ are freely
permutable when the archimedean representation is an irreducible principal series representation. In that situation
we can replace $|x|^{\l_n}\sgn(x)^{\d_n}\Sch(\R)$ on the left hand side of (\ref{MfMF1}) by
$|x|^{\l_j}\sgn(x)^{\d_j}\Sch(\R)$, and then sum over $j$. That makes the hypotheses on the test functions $f$ and
$F$ completely symmetric.

The $GL(n)$ summation formula requires one more ingredient, the $(n-1)$-dimensional hyperkloosterman sum
\begin{equation}
\label{hypkl}
\begin{aligned}
    &S(a,b\,;q,c,d)
    \ \ =
    \\
    & \sum_{x_j\in(\Z/\f{c_1\cdots c_jq}{d_1\cdots d_j}\Z)^*, \text{~for}\,
     j \le n-2}
     \!\!\!\!\!\!\!\!\!\!\!\!\!\!\!\!\!e\big(\textstyle
    \f{d_1x_1a}{q} \, + \, \f{d_2x_2\overline{x_1}}{\f{c_1q}{d_1}}
    \,+ \, \cdots \,+ \, \f{d_{n-2}x_{n-2}\overline{x_{n-3}}}{\f{c_1\cdots c_{n-3}q}{d_1\cdots d_{n-3}}} \,+\,\f{b\overline{x_{n-2}}}{\f{c_1\cdots c_{n-2}q}{d_1\cdots d_{n-2}}}
    \,\big)\,,
\end{aligned}
\end{equation}
where $e(u)$ is shorthand for $e^{2\pi i u}$ and $\overline{x_j}$, for $x_j\in (\Z/\f{qc_1\cdots c_j}{d_1\cdots
d_j}\Z)^*$, denotes the reciprocal of $x_j$ modulo $m_1\cdots m_j$\,.  The sum is used only when $d_1\cdots d_j$
divides $c_1\cdots c_jq$ for each $j\le n-2$.

\begin{thm}\label{mainthm}
Under the assumptions on a cusp form for $GL(n,\Z)$ stated above, with fixed values of $c_1,\ldots,c_{n-2}\in
\Z_{\neq 0}$ and relatively prime $a,q \in \Z$,
\[
\begin{aligned}
    &{\sum}_{r\neq 0}\, a_{c_{n-2},\ldots,c_1,r}\,\,e\big(-\textstyle{\f{ra}{q}}\big)\, f(r) \ \ =
    \\
    &\qquad = \ \ |q| \sum_{d_1|q c_1}\, \sum_{d_2|\f{qc_1c_2}{d_1}} \dots \!\! \sum_{d_{n-2}|\f{q c_1 \dots c_{n-2}}{d_1 \dots d_{n-3}}}\, \sum_{r\neq 0}\, {\textstyle\f{a_{r,d_{n-2},\ldots,d_1}}{|rd_1\cdots d_{n-2}|}}\ \times
    \\
    &\qquad\qquad\qquad\qquad\qquad \times \
    S(r,\bar{a};q,c,d)
    \ F\!\(\textstyle\f{r \, d_{n-2}^2\, d_{n-3}^3\cdots d_1^{n-1}}{q^n\, c_{n-2}\, c_{n-3}^2\cdots c_1^{n-2}}\)\,.
\end{aligned}
\]
\end{thm}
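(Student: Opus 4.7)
The strategy is to adapt the automorphic-distribution argument of \cite{voronoi} to general $n$. Let $\tau$ denote the automorphic distribution on $GL(n,\Z)\backslash GL(n,\R)$ attached to the cusp form, realized inside $V_{\l,\d}$ via the Casselman embedding; equivalently, $\tau$ is the boundary distribution of the automorphic form along the Iwasawa decomposition. Restricting $\tau$ to the upper unipotent subgroup $N$ and invoking its $\G\cap N$-periodicity yields a Fourier expansion whose coefficients are precisely the $a_k$, attached to the characters $\psi_k(n)=e(k_1 n_{1,2}+\cdots+k_{n-1}n_{n-1,n})$ of $N/(\G\cap N)$ with $k=(k_1,\ldots,k_{n-1})\in\Z_{\ne 0}^{n-1}$.

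The left-hand side arises naturally as a pairing. Integrating $\tau$ against $\overline{\psi_k}$ in the first $n-2$ superdiagonal coordinates of $N$, with the partial character indexed by $(k_1,\ldots,k_{n-2})=(c_{n-2},\ldots,c_1)$, extracts a partial Fourier coefficient; the surviving coordinate is the $(n-1,n)$-entry, and the resulting one-variable distribution has the Fourier expansion $\sum_{r\neq 0}a_{c_{n-2},\ldots,c_1,r}\,e(r\,\cdot)$. Translating by the rational matrix $I+\f{a}{q}E_{n-1,n}$ produces the twist $e(-ra/q)$, and pairing against $f$, viewed as a test function on the $(n-1,n)$-slot of $N$, gives the left-hand side of the theorem.

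The central step is to invoke the $\G$-automorphy of $\tau$ via the long Weyl element $w_\ell$. One rewrites the pairing by applying $w_\ell$ to its argument and decomposing $w_\ell\cdot(I+\f{a}{q}E_{n-1,n})$ via the Bruhat decomposition over $\Q$, then sorts the resulting rational matrices into $\G\cdot(n\,t\,w_\ell\,n')$ with $n,n'\in N$ and $t$ in the diagonal torus. The orbits of this reorganisation are parameterised by divisors $d_1\mid qc_1$, $d_2\mid qc_1c_2/d_1$, \ldots, producing the finite sum on the right; the unipotent pieces $n,n'$ contribute additive characters which, after the usual rearrangement via reciprocals modulo the successive moduli, assemble into exactly the hyperkloosterman phase of (\ref{hypkl}). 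Meanwhile the reversal of Fourier indices induced by conjugation with $w_\ell$ turns $a_{c_{n-2},\ldots,c_1,r}$ into $a_{r,d_{n-2},\ldots,d_1}$, while the torus factor $t$ supplies both the normalisation $|q|/|rd_1\cdots d_{n-2}|$ and the archimedean scaling $r\,d_{n-2}^2\cdots d_1^{n-1}/(q^n c_{n-2}\cdots c_1^{n-2})$ that appears inside $F$.

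The last ingredient is the identification of the test-function transform. Because the action of $w_\ell$ on $V_{\l,\d}$ is the long intertwining operator, its kernel in the non-compact picture is precisely the oscillatory integral (\ref{Ffromf}), equivalently the Mellin relation (\ref{MfMF}); thus the test function carried by the $w_\ell$-translated pairing is exactly $F$, evaluated at the torus scaling computed above. The main obstacle is the combinatorial Bruhat step: one must track how the divisibility chain $d_j\mid qc_1\cdots c_j/(d_1\cdots d_{j-1})$ and the reciprocal-modular pattern of (\ref{hypkl}) arise inductively from the rational Bruhat decomposition, and verify that the torus scaling matches the $F$-argument exactly. Although philosophically identical to the $GL(3)$ argument, the bookkeeping of positive-root exponents, successive moduli, and nested reciprocals becomes significantly more intricate; convergence and interpretation of the distributional pairings are handled as in \cite[\S4,\,\S5]{voronoi}.
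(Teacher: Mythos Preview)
Your outline differs substantially from what the paper actually does, and it has a genuine gap.

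\textbf{Difference in approach.} You propose to apply the long Weyl element $w_\ell$ in a single stroke, then sort everything out via one rational Bruhat decomposition. The paper's classical proof (section~\ref{sec:firstproof}) does \emph{not} do this. Instead it factors $w_\ell$ into simple reflections and proceeds inductively: at stage $j$ it uses an $SL(2,\Z)$-element embedded via $\Phi_j$ (lemma~\ref{RSlem3}) to relate the one-variable distribution $\sigma_{j,m,k}$ to $\rho_{j-1,\widetilde m,\widetilde k}$ by a single inversion $x\mapsto 1/x$, and then a single Fourier transform (proposition~\ref{sigmarhoftprop}) to pass from $\rho_{j-1}$ to the next $\sigma_{j-1}$. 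Each step introduces one divisor $d_h$ and one layer of the hyperkloosterman exponential; the operators $\mathcal T^*_{j,a,b}$ in (\ref{T1star}) encode exactly these one-step moves, and the chain (\ref{chain1})--(\ref{chain4}) assembles them. The transform $f\mapsto F$ then appears not as the kernel of the long intertwining operator applied once, but as the composite $\mathcal T_{1,\cdot,\cdot}\cdots\mathcal T_{n-1,\cdot,\cdot}$ computed via Mellin transforms (\ref{mellwithtjab2}). Your single-$w_\ell$ strategy is conceivable in principle, but you would have to produce the nested divisor chain and the iterated-reciprocal structure of (\ref{hypkl}) directly from one Bruhat decomposition of $w_\ell\cdot(I+\tfrac{a}{q}E_{n-1,n})$, and you concede this is ``the main obstacle'' without carrying it out. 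That is the combinatorial heart of the theorem, not a bookkeeping detail.

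\textbf{The analytic gap.} More seriously, you defer all analytic issues to \cite[\S4--5]{voronoi}. That is insufficient: the $GL(3)$ argument there does not cover the $GL(n)$ situation. The paper devotes the whole of section~\ref{sec:inforder} to proving that the intermediate distributions $\rho_{j,m,k}$ and $\sigma_{j,m,k}$ vanish to infinite order at $0$ and extend canonically across $\infty$ (proposition~\ref{fliprelationsprop}). This is what makes the flip identities in lemma~\ref{RSlem3} valid as equalities of distributions on all of $\R\cup\{\infty\}$ rather than just on $\R\setminus\{0\}$; without it the chain (\ref{chain3}) collapses. The proof requires showing that the $R_{j,m,k}\tau$ vanish to infinite order along codimension-one Schubert cells $C_{j+1}$ (lemmas~\ref{inforderlem3}--\ref{inforderlem4}), which in turn rests on a hypertangentiality argument (proposition~\ref{hypertanprop}) related to the Casselman--Hecht--Mili\v{c}i\'{c} mechanism. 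Your proposal contains no analogue of this, and the long-intertwining-operator picture you invoke does not supply it automatically.
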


A Voronoi summation formula for $GL(n)$ appears in Goldfeld-Li \cite{gli1,gli2}. Our formula precedes the
Goldfeld-Li formula; see \cite{gli3}. It is also more general in two respects: it applies not only to spherical
principal series representations, and involves summation over the Fourier coefficients $a_{c_{n-1},\ldots,c_1,r}$
for arbitrary nonzero $c_1,\, c_2,\, \dots ,\,c_{n-2}\,$, not with $c_1 = c_2 = \dots = c_{n-2} = 1$ as in
\cite{gli1,gli2}. The freedom to specify arbitrary non-zero $c_j$ is potentially important; in the case of $GL(3)$,
for example, Li's convexity breaking result \cite{li} crucially depends on this additional freedom. Theorem
(\ref{mainthm}) applies to cusp forms. Ivic \cite{ivic} proves and uses a Voronoi formula for multiple divisor
functions, which corresponds to non-cusp forms. His proof uses Poisson summation. In fact, Voronoi formulas for
noncusp forms can always be derived using the formulas for cusp forms on smaller groups, though this procedure may
be complicated because of their nonzero constant terms.

One application of our formula is to give a new proof of the functional equation for the standard $L$-function of a
cusp form for $GL(n,\Z)$, and more generally those twisted by Dirichlet characters.  This was carried out in
\cite{voronoi,inforder} for $n\le 3$ by a general argument, an argument that also applies to our situation here, as
we shall argue presently. The key point is that our Voronoi formula can be applied to the test functions
\begin{equation}
\label{fpower}
 f(x) \ = \ |x|^{-s} \sgn(x)^\eta \, , \ \ \ F(x) \ = \ {\prod}_{j\,=\,1}^n\,G_{\eta+\d_j}(s+\l_j)\i \, \, |x|^{s} \sgn(x)^\eta \, ,
\end{equation}
even though they do not satisfy the hypotheses of the theorem as stated. They can, because of a deeper analytic
fact:~the relevant automorphic distributions in section~\ref{sec:background} vanish to infinite order at 0 and
$\infty$ in the sense of \cite{inforder} (see proposition~\ref{fliprelationsprop} below). The proof of this
vanishing to infinite order involves arguments related to the main mechanism of proof in the paper \cite{chm} of
Casselman-Hecht-Mili{\v c}i{\'c}. In effect, certain Fourier components of automorphic distributions are completely
determined, as distributions, by their restriction to an open Schubert cell, just as is the case for Whittaker
distributions in their paper. We could have relied on their result, but have chosen to develop the necessary tools
ourselves in section \ref{sec:inforder}, for the following reasons. Our tools are both stronger and more concrete
than the corresponding arguments in \cite{chm}, and we expect to use them in the future, in situations not covered
by \cite{chm}. In fact, we prove a slightly more general version of an important lemma -- \lemref{inforderlem3}
below -- than we need for the proof of the Voronoi formula. This more general version, which we also expect to use
in the future, cannot be reduced to the Whittaker case.

We now consider the formula in \thmref{mainthm} with the choice of functions (\ref{fpower}), taking $c_j$, $d_j$,
and $q$ to be positive~-- as we may, because the coefficient $a_k$ is insensitive to the signs of the entries of
$k$. That results in a general functional equation for additively twisted $L$-functions:
\begin{equation}
\label{addtwistlfn}
\aligned &\sum_{r\,\neq\,0}a_{c_{n-2},\ldots,c_1,r}\,e(-\smallf{ra}{q})\,|r|^{-s}\,\sgn(r)^\eta \ \
   = \ \ \prod_{j\,=\,1}^n\,G_{\eta+\d_j}(s+\l_j)\i\ \ \ \times
\\
&\qquad\qquad \times \ |q|^{1-n s}\sum_{\srel{d_j|\f{qc_1\cdots c_j}{d_1\cdots d_{j-1}}}{\text{for all }j\,\le\,n-2}}
\sum_{r\,\neq\,0} a_{r,d_{n-2},\ldots,d_1}\, S(r,\bar{a};q,c,d)\ \times
\\
&\qquad\qquad\qquad \times \ |r|^{s-1}\,\sgn(r)^\eta \({\prod}_{j\,=\,1}^{n-2}|c_j|^{-(n-1-j)s}\,|d_j|^{(n-j)s-1}\) ,
\endaligned
\end{equation}
or, in the special case of $c_1=\cdots=c_{n-2}=1$, more simply
\begin{equation}
\label{simpleraddtwistlfn}
\aligned & \sum_{r\,\neq\,0}\, a_{1,\ldots,1,r}\,e(-\smallf{ra}{q})\,|r|^{-s}\,\sgn(r)^\eta \ \ = \ \
\prod_{j\,=\,1}^n\,G_{\eta+\d_j}(s+\l_j)\i\ \ \times
\\
&\qquad \times \ \ |q|^{1-n s}\, \sum_{\srel{d_j|\f{q}{d_1\cdots d_{j-1}}}{\text{for all
}j\,\le\,n-2}} \sum_{r\,\neq\,0}a_{r,d_{n-2},\ldots,d_1}\ \ \times \\
& \qquad\qquad \ \ \times\  \   S(r,\bar{a};q,(1,\ldots,1),d)\, |r|^{s-1}\,\sgn(r)^\eta
{\prod}_{j\,=\,1}^{n-2}\,|d_j|^{(n-j)s-1}\, .
\endaligned
\end{equation}
Now let $\chi$ be a primitive Dirichlet character modulo $q$, such that $\chi(-1)=(-1)^\eta\,$. A basic identity
asserts
\begin{equation}
\label{gausssum} {\sum}_{a\in \Z/q\Z} \ \chi(a)\,e(-\smallf{ra}{q}) \ \ = \ \ \left\{\
\begin{array}{ll}
\ 0\ \ & \hbox{~~if $(r,q)>1$} \\
\ \chi(-r)\i\, g_\chi \ \ & \hbox{~~if $(r,q)=1$\,,}
\end{array}
\right.
\end{equation}
where $g_\chi$ denotes the Gauss sum $\sum_{a\in \Z/q\Z} \chi(a)e(\smallf{a}{q})$ for $\chi$. Thus, summing
(\ref{simpleraddtwistlfn}) over the residue classes modulo $q$, one derives the functional equation of the standard
$L$-function twisted by the Dirichlet character $\tau$\,,
\begin{equation}
\label{twistedfe1} \!\sum_{r\,=\,1}^\infty\, a_{r,1,\ldots,1}\,\overline{\chi(r)}\,r^{s-1}  \ = \ q^{ns}\,g_\chi^{-n}
\prod_{j=1}^n\,G_{\eta+\d_j}(s+\l_j) \sum_{r\,=\,1}^\infty \, a_{1,\ldots,1,r}\,\chi(r)\,r^{-s}\,.
\end{equation}
\bigskip

It is a pleasure to express our gratitude to James Cogdell and Herve Jacquet. Both of them helped us with detailed
information about the state of the  literature relevant to the appendix.

\section{Automorphic Distributions}\label{sec:background}

We derive the Voronoi summation formula not from the datum of an automorphic form, but from the essentially
equivalent datum of an automorphic distribution. For the connection between the two we refer the reader to
\cite{voronoi}.

We kept the definition (\ref{Vld}) of the principal series representation with para\-meter $(\l,\d)$ purposely
vague: when the functions $f$ in the definition are required to be smooth, one denotes the resulting space by
$V^{\infty}_{\l,\d}$, and when these ``functions" are only required to be distributions, one obtains
$V^{-\infty}_{\l,\d}$, the larger space of distribution vectors. Of course these are not the only choices of a
topology for $V_{\l,\d}$. In all cases the group $GL(n,\R)$ acts by left translation. By definition an automorphic
distribution for an arithmetic subgroup $\G \subset GL(n,\Q)$, with representation parameter $(\l,\d)$, is a
$\G$-invariant vector $\tau $ in $V^{-\infty}_{\l,\d}$ -- in other words,
\begin{equation}
\label{autod}
\begin{aligned}
    &\tau \ \in \ C^{-\infty}\big(GL(n,\R)\big) \ \ \ \text{such that}
    \\
    & \ \ \ \ \ \ \tau\ \(\g\,g\(\begin{smallmatrix} a_1 & 0 & 0 \\ \star & \ddots & 0 \\ \star & \star & a_n \\ \end{smallmatrix}\)\)\ =\ \tau(g)\cdot \! {\prod}_{1\le j\le n}\(|a_j|^{\f{n+1}{2}-j-\l_j}\sgn(a_j)^{\d_j} \)
\end{aligned}
\end{equation}
for all $\g \in \G$ and $a_1,\,a_2,\,\dots,\,a_n \in \R^*$. In dealing with distributions, we adopt the same
convention as in our other papers: ``distributions transform like functions"~-- i.e., they are naturally dual to
smooth, compactly supported measures.

Let $N(\R)\subset G(\R)=_{\text{def}}GL(n,\R)$ denote the subgroup of unipotent upper triangular matrices, and
$N'(\R)=[N(\R),N(\R)]$, $N''(\R)=[N(\R),N'(\R)]$ its first two derived subgroups. In the classical approach we
shall work exclusively at full level; we therefore change notation from $\G$ to $G(\Z)$. Analogously we let
$N(\Z)$, $N'(\Z)$, $N''(\Z)$ denote the groups of integral points in $N(\R)$ and its derived subgroups. Since we
are working at full level, no nonzero automorphic distributions can exist unless $\d_1+ \cdots + \d_n\equiv
0\imod2$.  We shall also assume $\l_1+\cdots +\l_n=0$; this can be arranged by multiplying $\tau$ by an appropriate
character of the center. For emphasis,
\begin{equation}\label{sumtozero}
    \sum_{i\,=\,1}^n \l_i \ \ = \ \ 0 \ \ \ \ \text{and} \ \ \ \sum_{i\,=\,1}^n
    \d_i \ \ \equiv \ \ 0 \imod 2\,.
\end{equation}
The first of these assumptions is not necessary, but helps to simplify formulas and arguments.

Our computations involve integrating translates of $\tau$, sometimes multiplied by smooth functions, over compact
sets. This makes sense: let $\ell(g)$ denote the left $g$-translate of $\tau$, formally
\begin{equation}
\label{ellofg} (\ell(g)\tau)(g_1)\ \ = \ \ \tau(g^{-1}g_1)\,;
\end{equation}
then $g\mapsto \ell(g)\tau$ is a smooth function on $G(\R)$ with values in the closed subspace
$V^{-\infty}_{\l,\d}$ of $C^{\infty}(\R)$, from which it inherits the structure of complete, locally convex,
Hausdorff topological vector space. One can integrate measurable functions with values in such a topological vector
space over any finite measure space. For example, since $\tau$ is $G(\Z)$-invariant under left translation, $n
\mapsto \ell(n)\tau$ induces smooth, $V^{-\infty}_{\l,\d}$-valued functions on the compact spaces $N'(\R)/N'(\Z)$,
$N''(\R)/N''(\Z)$. Thus
\begin{equation}
\label{tau'} \tau' \ = \ \int_{N'(\R)/N'(\Z)}\ell(n)\tau\,dn \ \ \ \ \ \text{and} \ \ \ \ \ \tau'' \ = \
\int_{N''(\R)/N''(\Z)}\ell(n)\tau\,dn
\end{equation}
are well defined vectors in $V^{-\infty}_{\l,\d}$, by construction invariant under, respectively, $N'(\R)$ and
$N''(\R)$\,:
\begin{equation}
\label{tau'1} \tau' \in \big(V^{-\infty}_{\l,\d}\big)^{N'(\R)}\,, \ \ \ \ \ \tau'' \in
\big(V^{-\infty}_{\l,\d}\big)^{N''(\R)}\,.
\end{equation}
Both are also $N(\Z)$-invariant,
\begin{equation}
\label{tau'2} \tau'\,,\,\, \tau'' \,\ \in \,\ \big(V^{-\infty}_{\l,\d}\big)^{N(\Z)}\,,
\end{equation}
since $N(\Z)$ leaves $\tau$ invariant, normalizes all of the four groups $N'(\R)$, $N'(\Z)$, $N''(\R)$, $N''(\Z)$,
and preserves the measures on $N'(\R)$ and $N''(\R)$.

Let $B_-(\R)\subset G(\R)$ denote the subgroup of lower triangular matrices. In view of (\ref{autod}), every
distribution in $V^{-\infty}_{\l,\d} \subset C^{-\infty}(N(\R))$ behaves in a $C^\infty$ manner under right
translation by elements of $B_-(\R)$. Since $N(\R)\cdot B_-(\R)$ is open in $G(\R)$, we can restrict such
distributions from $G(\R)$ to the subgroup $N(\R)$ \cite{voronoi}. In particular this applies to $\tau'$; we shall
refer to its restriction to $N(\R)$ as $\tau_{\text{abelian}}$\,. In view of (\rangeref{tau'1}{tau'2}), and because
$N'(\R) \subset N(\R)$ is normal,
\begin{equation}
\label{tauabelian} \tau_{\text{abelian}}\ =_{\text{def}} \ \tau' \big|_{N(\R)}\ \in \ C^{-\infty}\big(N(\Z)\backslash
N(\R)/N'(\R)\big)\,.
\end{equation}
The entries on the first superdiagonal provide coordinates $(x_1,\,\dots,\,x_{n-1})$ for $N'(\R)\backslash N(\R)
\cong \R^{n-1}$. Under this identification, the image of $N(\Z)$ in $N'(\R)\backslash N(\R)$ corresponds to
$\Z^{n-1}$. Thus $\tau_{\text{abelian}}$\,, as distribution on $N(\Z)\backslash N(\R)/N'(\R)\cong
\R^{n-1}/\Z^{n-1}$,\,\ has the Fourier expansion
\begin{equation}
\label{tauabelexp} \tau_{\text{abelian}}(x)\ \ = \ \ {\sum}_{k\in(\Z_{\neq 0})^{n-1}}\ c_k\,e(k\cdot x)\,.
\end{equation}
Here $e(u) = e^{2\pi i u}$ as before, and $k\cdot x$ stands for the sum $k_1x_1+\cdots + k_{n-1}x_{n-1}$. Note that
the summation does not involve terms for which at least one of the $k_j$ equals zero -- this reflects the
assumption of cuspidality; cf. \cite{voronoi}.

When there are several choices of the representation parameter, as is usually the case, the coefficients $c_k$ in
the expansion (\ref{tauabelexp}) do depend on $(\l,\d)$. However, they are related to the Fourier coefficients
$a_k$ of any automorphic form associated to $\tau$ by the formula
\begin{equation}
\label{ctoa} a_{k_1,k_2,\dots,k_{n-1}}\, = \, \prod_{j=1}^{n-1}\left( (\sgn k_j)^{\d_1 + \d_2 + \dots + \d_j}
\,|k_j|^{\l_1 + \l_2 + \dots + \l_j}\right) c_{k_1,k_2,\dots,k_{n-1}}\,.
\end{equation}
The $a_k$ are independent of $(\l,\d)$ -- in fact, they coincide with the Hecke eigenvalues in the case of full
level, as we are assuming. Alternatively one can show that the $a_k$ do not depend on $(\l,\d)$ by calculating the
effect of the intertwining operators between the different principal series representations into which our
automorphic representation can be embedded.

For $x\in \R^{n-1}$ and $y\in \R^{n-2}$, define
\begin{equation}\label{nxy}
    n_{x,y} \ \ =  \ \  \(\begin{smallmatrix}
                                           1 & x_1 & y_1 & 0 & 0 & 0 \\
                                            0 & 1 & x_2 & y_2 & 0 & 0 \\
                                            0 & 0 & \ddots  & \ddots & \ddots & 0 \\
                                            0 & 0 & 0 & 1 & x_{n-2} & y_{n-2} \\
                                            0 & 0 & 0 & 0 & 1 & x_{n-1} \\
                                            0 & 0 & 0 & 0 & 0 & 1
                                          \end{smallmatrix}\).
\end{equation}
Every element of $N$ can be uniquely decomposed as either $n''n_{x,y}$ or as $n_{x,y}n''$, for some $x \in
\R^{n-1}$, $y \in \R^{n-2}$, and $n''\in N''(\R)$; only the factor $n''$ depends on which order is chosen. Thus
\begin{equation}
\label{tau'3} \tau' \ \ = \ \ \int_{(\R/\Z)^{n-2}}\ell(n_{0,y})\tau''    \, dy_1\cdots dy_{n-2}\,.
\end{equation}
Corresponding to the datum $(j,m,k)$, with $1\le j \le n-2$, $m\in \Z_{\neq 0}$, and $k\in \Z^{n-1}$, we define
\begin{equation}
\label{Rjkmtau}
\begin{aligned}
&R_{j,m,k}\tau \ = \ \int_{(\R/\Z)^{n-2}}\int_{\left\{\begin{smallmatrix}{x\in (\R/\Z)^{n-1}} \\
{x_{j+1}\,=\,0 \ \ \ }\end{smallmatrix}\right\}} e(k\cdot x\,+\, my_j)\,\, \ell(n_{x,y})\tau''\, dx\, dy\,,
\\
&S_{j,m,k}\tau \ = \ \int_{(\R/\Z)^{n-2}}\int_{\left\{\begin{smallmatrix}{x\in (\R/\Z)^{n-1}}
\\ {x_j\,=\,0\, \ \ \ \ \ \ }\end{smallmatrix}\right\}} e(k\cdot x\,+\, my_j)\,\, \ell(n_{x,y})\tau''\, dx\, dy\,,
\end{aligned}
\end{equation}
and we also define $R_{0,m,k}\tau$ and $S_{n-1,m,k}\tau$, but only corresponding to $m=1$\,:
\begin{equation}
\label{Rjk1tau}
\begin{aligned}
&R_{0,1,k}\tau \ = \ \int_{(\R/\Z)^{n-2}}\int_{\left\{\begin{smallmatrix}{x\in (\R/\Z)^{n-1}} \\
{x_{1}\,=\,0\ \ \ \ \ \ \ }\end{smallmatrix}\right\}} e(k\cdot x)\,\, \ell(n_{x,y})\tau''\, dx\, dy\,,
\\
&S_{n-1,1,k}\tau \ = \ \int_{(\R/\Z)^{n-2}}\int_{\left\{\begin{smallmatrix}{x\in (\R/\Z)^{n-1}}
\\ {x_{n-1}\,=\,0 \ \ \ \ }\end{smallmatrix}\right\}} e(k\cdot x)\,\, \ell(n_{x,y})\tau''\, dx\, dy\,.
\end{aligned}
\end{equation}
In these equations $dx$ is shorthand for $dx_1 \dots dx_{j}\, dx_{j+2}\dots dx_{n-1}$ in the case of
$R_{j,m,k}\tau$, and for $dx_1 \dots dx_{j-1}\, dx_{j+1}\dots dx_{n-1}$ in the case of $S_{j,m,k}\tau$; $dy$ stands
for $dy_1 \dots dy_{n-2}\,$ in all cases.

The $\,R_{j,m,k}\tau\,$ and $\,S_{j,m,k}\tau\,$ are integrals of continuous $\,V^{-\infty}_{\l,\d}$-valued
functions over tori, hence
\begin{equation}
\label{Rjkmtau3} R_{j,m,k}\tau\,,\ S_{j,m,k}\tau \ \in \ V^{-\infty}_{\l,\d}\,.
\end{equation}
Large subgroups of $\,N(\Z)\,$ leave them invariant -- see \lemref{RSlem} below. The $\,G(\Z)$-invariance of
$\,\tau\,$ implies certain relations among the $\,R_{j,m,k}\tau\,$ and $\,S_{j,m,k}\tau\,$. To state them, we need
to consider the embeddings
\begin{equation}
\label{sl2emb} \Phi_j \ : \ SL(2,\R)\ \ \hookrightarrow \ \ G(\R)\ = \ GL(n,\R) \qquad\ \ \  (\, 1 \le j \le n-1\,)
\end{equation}
into the $2 \times 2$ diagonal blocks with ``vertices" $(j,j)$, $(j+1,j+1)$. On the infinitesimal level this means
\begin{equation}
\label{sl2emb2} {\Phi_j}_* \(\begin{smallmatrix} 0 & 1 \\ 0 & 0
\end{smallmatrix} \)\, = \, E_{j,j+1}\,,\ \, {\Phi_j}_* \!\(\begin{smallmatrix} 0 & 0 \\ 1 & 0 \end{smallmatrix} \)\, = \, E_{j+1,j}\,,\, \ {\Phi_j}_* \! \(\begin{smallmatrix} 1 &
0 \\ 0 & -1
\end{smallmatrix} \)\, = \, E_{j,j} -  E_{j+1,j+1}\,,
\end{equation}
with $\,E_{r,s}\,$= matrix with $(r,s)$ entry equal to one, and the other entries equal to zero. The image of
$\Phi_j$ normalizes
\begin{equation}
\label{Nj}
\begin{aligned}
N_j(\R) \ \ = \ \ \text{subgroup of}\,\ G(\R)\,\ \text{whose Lie algebra is spanned}
\\
\text{by} \ \ \{\, E_{r,s}\, \big| \, 1\le r < s \le n\,, \ (r,s) \neq (j,j+1) \,\}\,.
\end{aligned}
\end{equation}
In fact, $\,N_j(\R)\,$ is the unipotent radical of a parabolic which contains $\,\operatorname{Im} \Phi_j\,$ as the
semisimple part of its Levi component.

\begin{lem}
\label{RSlem} Both $\,R_{0,1,k}\tau\,$ and $\,S_{n-1,1,k}\tau\,$ are $\,N(\Z)$-invariant, and
\[
R_{j,m,k}\tau \ \in \ \big( V^{-\infty}_{\l,\d}\big)^{N_{j+1}(\Z)}\,,\ \ \ \ S_{j,m,k}\tau \ \in \ \big(
V^{-\infty}_{\l,\d}\big)^{N_j(\Z)} \qquad (\, 1 \le j \le n-2\,).
\]
For $\left(\begin{smallmatrix} a & b \\ c & d
\end{smallmatrix}\right) \in SL(2,\Z)\,$,
\[
\begin{aligned}
&d\,m = c\,k_2\ \ \ \Longrightarrow \ \ \ \ell\big(\Phi_1\left(\begin{smallmatrix} d & -b \\ -c & a \end{smallmatrix}
\right)\big) R_{0,1,\widetilde k}\,\tau\ = \ S_{1,m,k}\,\tau \,,
\\
&\qquad\qquad \ \ \text{with}\,\ \ \widetilde k_{2} = a\,k_{2} - b\,m\,, \, \ \widetilde k_i = k_i \ \
\text{otherwise\,;}
\end{aligned}
\]
similarly
\[
\begin{aligned}
&d\,m = -c\,k_{n-2}\ \ \ \Longrightarrow \ \ \ \ell\big( \Phi_{n-1}\left(\begin{smallmatrix} d & -b \\ -c & a
\end{smallmatrix}\right)\big) S_{n-1,1,\widetilde k}\,\tau \ = \ R_{n-2,m, k} \tau \,,
\\
&\qquad\qquad  \text{with}\ \ \widetilde k_{n-2} = a\,k_{n-2} + b\,m\,, \, \ \widetilde k_i = k_i \ \
\text{otherwise\,;}
\end{aligned}
\]
and, for $2 \le j \le n-2$\,,
\[
\begin{aligned}
&a\,m = - c\,k_{j+1}\ \ \ \Longrightarrow\ \ \ \ell\big( \Phi_{j} \left(\begin{smallmatrix} d & -b \\ -c & a
\end{smallmatrix} \right)\big) S_{j,m,k}\tau \ = \ R_{j-1,\widetilde m,\widetilde k}\,\tau \,,\, \ \text{with}
\\
&\qquad \widetilde m = - c\,k_{j-1}\,,\,\ \widetilde k_{j-1} =  d\, k_{j-1}\,, \, \ \widetilde k_{j+1} =  d\,k_{j+1}+
b\,m\,, \, \ \widetilde k_i = k_i \ \ \text{otherwise\,.}
\end{aligned}
\]
Finally, for $1 \le j \le n-2$,
\[
\begin{aligned}
&\ell\big( \Phi_{j+1} \left(\begin{smallmatrix} 1 & 1 \\ 0 & 1 \end{smallmatrix} \right)\big) R_{j,m,k}\tau \ = \
R_{j,m,\widetilde k}\,\tau \ \ \ \text{with}\,\ \widetilde k_i =k_i + \d_{i,j}\,m\ ,
\\
&\qquad\qquad \ell\big( \Phi_j \left(\begin{smallmatrix} 1 & 1 \\ 0 & 1 \end{smallmatrix} \right)\big) S_{j,m,k}\tau \
= \ S_{j,m,\widetilde k}\,\tau \ \ \ \text{with}\,\ \widetilde k_i =k_i - \d_{i,j+1}\,m\ .
\end{aligned}
\]
\end{lem}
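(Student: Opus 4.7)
My plan is to prove each clause by the same strategy: act on the integrand by the given group element, factor the product with $n_{x,y}$ into a piece of the form $n_{x',y'}$ times elements to which $\tau''$ is insensitive from the left, and then change variables in the torus integral. The two invariances of $\tau''$ available are left $N''(\R)$-invariance (from the definition of $\tau''$) and left $N(\Z)$-invariance (inherited from $\tau$ and the fact that $N(\Z)$ preserves $N''(\R)/N''(\Z)$ together with its Haar measure).

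For the invariance claims, given $\gamma\in N(\Z)$ a direct matrix multiplication produces $\gamma\cdot n_{x,y} = n_{x',y'}\cdot\widetilde n''$ with $\widetilde n''\in N''(\R)$, $x'_i = x_i+\gamma_{i,i+1}$, and $y'_i = y_i+\gamma_{i,i+2}+\gamma_{i,i+1}\,x_{i+1}$. When $\gamma\in N_{j+1}(\Z)$, so that $\gamma_{j+1,j+2}=0$, the slice $x_{j+1}=0$ is preserved on the torus and the phase $e(k\cdot x + m\,y_j)$ picks up only integer shifts; the change of variables therefore yields $\ell(\gamma)R_{j,m,k}\tau = R_{j,m,k}\tau$. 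The $S_{j,m,k}$ case under $N_j(\Z)$ is symmetric with the roles of $x_j$ and $x_{j+1}$ exchanged. For $R_{0,1,k}\tau$ and $S_{n-1,1,k}\tau$, the absence of the $y_j$ term in the phase combined with $m=1$ forces every potential shift to be integer, so full $N(\Z)$-invariance holds.

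For the $SL(2,\Z)$ relations, $g=\Phi_j(g')\in G(\Z)$ is generally not in $N$, and $g\cdot n_{x,y}$ is not upper triangular. I would apply $\ell(g)\tau = \tau$ directly, then use the Bruhat factorization of $g'$ in $SL(2,\Z)$ to rewrite $g\cdot n_{x,y}\cdot n_0 = n_{x',y'}\cdot n'''\cdot\gamma_1$ with $n'''\in N''(\R)$ and $\gamma_1\in N(\Z)$. The divisibility conditions $dm=ck_2$, $dm=-ck_{n-2}$, and $am=-ck_{j+1}$ are precisely the integrality conditions that keep $\gamma_1$ in $N(\Z)$, so both $n'''$ and $\gamma_1$ get absorbed via the two invariances; these conditions also determine the new character parameters $(\widetilde k,\widetilde m)$ produced by the ensuing change of variables. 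The interchange between the $R$-slice $x_{j+1}=0$ and the $S$-slice $x_j=0$ is the geometric action of the Weyl element in the Bruhat factorization.

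The shift relations involving $\Phi_{j+1}(U)$ and $\Phi_j(U)$ with $U=\ttwo{1}{1}{0}{1}$ are the degenerate $c=0$ case and illustrate the main technical subtlety. Although $\Phi_{j+1}(U) = I + E_{j+1,j+2}$ lies in $N(\Z)$, the naive decomposition $\Phi_{j+1}(U)\cdot n_{x,y} = n_{x',y'}\cdot\widetilde n''$ via $N''$-invariance alone only produces $x'_{j+1}=x_{j+1}+1$ and $y'_{j+1}=y_{j+1}+x_{j+2}$, giving a trivially invariant integral. The correct step is the alternative decomposition $\Phi_{j+1}(U)\cdot n_{x,y} = n_{x'',y''}\cdot n''\cdot\Phi_{j+1}(U)$ with $n''\in N''(\R)$ and the compensating twist $y''_j = y_j - x_j$; absorbing the trailing $\Phi_{j+1}(U)\in N(\Z)$ via $\ell(\Phi_{j+1}(U))\tau'' = \tau''$ and then changing variables $\widetilde y_j = y_j - x_j$ produces the character shift $k_j\to k_j+m$. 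Recognizing this factorization---rather than relying only on $N''$-invariance, which would give trivial invariance everywhere---is the one subtle point of the proof; with it identified, each computation reduces to transparent triangular matrix arithmetic.
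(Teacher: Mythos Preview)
Your treatment of the invariance assertions and of the final shift relations is essentially correct and close in spirit to the paper. The alternative factorization $h_{j+1}(1)\,n_{x,y}=n_{x'',y''}\,n''\,h_{j+1}(1)$ that you single out is precisely the conjugation move, and the resulting shift $y_j\mapsto y_j-x_j$ is right.

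Where your outline becomes problematic is the $SL(2,\Z)$ relations. You propose to ``apply $\ell(g)\tau=\tau$ directly'' and then factor $g\cdot n_{x,y}\cdot n_0=n_{x',y'}\cdot n'''\cdot\gamma_1$ via a Bruhat decomposition, with $n'''\in N''(\R)$ and $\gamma_1\in N(\Z)$. But the integrand in the definitions of $R_{j,m,k}\tau$, $S_{j,m,k}\tau$ is $\ell(n_{x,y})\tau''$, not $\ell(n_{x,y})\tau$, and $\tau''$ is only $N(\Z)$- and $N''(\R)$-invariant, not $G(\Z)$-invariant. Worse, $\Phi_j(SL(2,\Z))$ does \emph{not} normalize $N''(\R)$ in general (conjugating $E_{j,j+3}\in\fn''$ produces a component along $E_{j+1,j+3}\notin\fn''$), so one cannot push $\Phi_j(\gamma)$ past $\tau''$ by a change of variables in the $N''$ integral alone. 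The r\^ole of the unexplained $n_0$ and of the Bruhat factorization remains unclear, and as written this is not a proof.

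The paper's device is to \emph{combine} the $N''(\R)/N''(\Z)$ integration defining $\tau''$ with the torus integration into a single integral over $N_j(\R)/N_j(\Z)$:
\[
R_{j-1,m,k}\tau=\int_{N_j(\R)/N_j(\Z)}\chi^R_{j-1,m,k}(n)\,\ell(n)\tau\,dn,\qquad
S_{j,m,k}\tau=\int_{N_j(\R)/N_j(\Z)}\chi^S_{j,m,k}(n)\,\ell(n)\tau\,dn,
\]
against explicit characters $\chi^R$, $\chi^S$ of $N_j(\R)$. Now $\Phi_j(SL(2,\Z))\subset G(\Z)$ normalizes both $N_j(\R)$ and $N_j(\Z)$ and preserves Haar measure, so $\ell(\Phi_j(\gamma^{-1}))$ applied to either integral equals the same integral with the character replaced by $\chi(A_\gamma(\cdot))$, where $A_\gamma(n)=\Phi_j(\gamma)\,n\,\Phi_j(\gamma)^{-1}$; the $G(\Z)$-invariance of $\tau$ absorbs the trailing $\Phi_j(\gamma^{-1})$. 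All that remains is a short matrix computation of $\chi^R_{j-1,m,k}(A_\gamma(n_{x,y}))$ and $\chi^S_{j,m,k}(A_\gamma(n_{x,y}))$.

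One more correction: the hypotheses $dm=ck_2$, $dm=-ck_{n-2}$, $am=-ck_{j+1}$ are not ``integrality conditions that keep $\gamma_1$ in $N(\Z)$'' as you say. In the paper's calculation they are exactly the conditions under which the transformed character $\chi^S_{j,m,k}\circ A_\gamma$ has no $y_{j-1}$-term (resp.\ $\chi^R\circ A_\gamma$ has no $y_j$-term), so that it coincides with some $\chi^R_{j-1,\widetilde m,\widetilde k}$ (resp.\ some $\chi^S$). That is what forces the $R\leftrightarrow S$ interchange, not any integrality.
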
\medskip

In the relationship between $\,S_{j,m,k}\tau\,$ and $\,R_{j-1,\widetilde m,\widetilde k}\,\tau\,$ the indices
$(\widetilde m , \widetilde k)$ and $(m,k)$ appear to play non-symmetric roles, since the former are defined in
terms of the latter. In fact, it is possible to express any $\,R_{j-1,m,k}\tau\,$ in terms of $\,S_{j,\widetilde
m,\widetilde k}\,\tau\,$, with suitably chosen $(\widetilde m,\widetilde k)$. One can see this either directly, by
inverting the map $(m,k) \mapsto (\widetilde m , \widetilde k)$, or applying the automorphism (\ref{Rjk1.2tau}),
which interchanges the roles of the $\,R_{j,m,k}\tau\,$ and the $\,S_{n-j,m,k}\tau\,$.\bigskip

\noindent {\it Proof\/} of lemma \ref{RSlem}. The passage from $\tau$ to  $R_{j-1,m,k}\tau$ and $S_{j,m,k}\tau$
involves two integrations, first over $N''(\R)/N''(\Z)$, then over $\R^{2n -4}/\Z^{2n-4}$. They can be combined
into one integration against characters of $N_j(\R)$ which are trivial on $N_j(\Z)$\,:
\begin{equation}
\label{Rjkmtau2}
\begin{aligned}
&R_{j-1,m,k}\tau \ = \ \int_{N_j(\R)/N_j(\Z)}\, \chi^R_{j-1,m,k}(n)\,\ell(n)\tau\,dn\,,
\\
&\qquad\qquad\qquad S_{j,m,k}\tau \ = \ \int_{N_j(\R)/N_j(\Z)}\, \chi^S_{j,m,k}(n)\,\ell(n)\tau\,dn\,;
\end{aligned}
\end{equation}
the characters $\,\chi^R_{j-1,m,k}\,,\,\chi^S_{j,m,k}\,:\,N_j(\R) \longrightarrow \C^*$ are determined by the
equations
\begin{equation}
\label{chijkmtau}
\begin{aligned}
&\chi^R_{j-1,m,k} \big|_{N''(\R)}\ \equiv \ 1\ ,\ \ \ \chi^S_{j,m,k} \big|_{N''(\R)}\ \equiv \ 1\,,
\\
&\qquad\qquad \text{and for $\,n_{x,y}\in N_j(\R)$\,,\,\ or equivalently $\,x_j = 0$}\,,
\\
&\chi^R_{j-1,m,k}(n_{x,y})\ \ = \ \ \begin{cases}\ e(k\cdot x + m\,y_{j-1})\ \ &\text{if}\,\ 2 \le j\le n-1 \\  \
e(k\cdot x )\ \ &\text{if}\,\ j = 1 \, \ \text{and}\,\ m=1 \ , \end{cases}
\\
&\chi^S_{j,m,k}(n_{x,y})\ \ = \ \ \begin{cases}\ e(k\cdot x + m\,y_{j})\ \ &\text{if}\,\ 1 \le j\le n-2 \\  \ e(k\cdot
x )\ \ &\text{if}\,\ j = n-1 \, \ \text{and}\,\ m=1 \ . \end{cases}
\end{aligned}
\end{equation}
The kernels of both $\,\chi^R_{j-1,m,k}\,$ and $\,\chi^S_{j,m,k}\,$ contain $\,N_j(\Z)\,$, and even all of
$\,N(\Z)\,$ in the exceptional cases of $\,\chi^R_{0,1,k}\,$ and $\,\chi^S_{n-1,1,k}\,$. That implies the initial
assertions.

The $\Phi_j$-image of $SL(2,\Z)$ lies in $G(\Z)$, it normalizes $N_j(\R)$ and $N_j(\Z)$, and conjugation by it
preserves Haar measure on $N_j(\R)$. For $\g \in SL(2,\Z)$, define
\begin{equation}
\label{chijkmtau1} A_\g \ : \ N_j(\R)\ \longrightarrow\ N_j(\R)\,,\ \ \ \ A_\g(n)\ = \ \Phi_j(\g)\, n \,
\Phi_j(\g^{-1})\,.
\end{equation}
Using the change of variables $\,n\rightsquigarrow A_\g(n)\,$, we find
\begin{equation}
\label{chijkmtau2}
\begin{aligned}
\ell\big(\Phi_j(\g^{-1})\big)\,R_{j-1,m,k}\tau \ &= \ \int_{N_j(\R)/N_j(\Z)}\,
\chi^R_{j-1,m,k}(n)\,\ell\big(A_{\g^{-1}}( n)\big)\tau\,dn
\\
&= \ \int_{N_j(\R)/N_j(\Z)}\, \chi^R_{j-1,m,k}\big(A_\g(n)\big)\,\ell(n)\tau\,dn\ ,
\end{aligned}
\end{equation}
and analogously
\begin{equation}
\label{chijkmtau3} \ell\big(\Phi_j(\g^{-1})\big)\,S_{j,m,k}\tau \ = \ \int_{N_j(\R)/N_j(\Z)}\,
\chi^S_{j,m,k}\big(A_\g(n)\big)\,\ell(n)\tau\,dn \ .
\end{equation}
Now suppose $\g = \left(\begin{smallmatrix} a & b \\ c & d
\end{smallmatrix}\right) \in SL(2,\Z)\,$, $\,(x,y) \in
\R^{n-1}\times \R^{n-2}$ with $x_j=0$, so that $n_{x,y}\in N_j(\R)$. A straightforward matrix computation shows
\begin{equation}
\label{chijkmtau4}
\begin{aligned}
&A_\g(n_{x,y})\ \equiv \ n_{\widetilde x,\widetilde y}\ \ \ \text{modulo}\ \
\big(\operatorname{Ker}\chi^R_{j-1\,,\,{\textstyle{\cdot}}\,,\,{\textstyle{\cdot}}\,}\big) \cap
\big(\operatorname{Ker}\chi^S_{j\,,\,{\textstyle{\cdot}}\,,\,{\textstyle{\cdot}}\,}\big) \,\ \ \ \ \text{with}
\\
&\ \ \ \ \widetilde x_{j-1} = d\,x_{j-1} - c\,y_{j-1}  \ ,\ \ \widetilde x_{j+1} = d\,x_{j+1} + c\,y_j \ ,\ \
\widetilde x_i = x_i \ \ \text{otherwise}\ ,
\\
&\ \ \ \ \ \ \ \ \widetilde y_{j-1} = a\,y_{j-1} - b\,x_{j-1}    \ ,\ \ \widetilde y_j =  a\,y_j + b\,x_{j+1} \ ,\ \
\widetilde y_i = y_i \ \ \text{otherwise}\ .
\end{aligned}
\end{equation}
These identities remain correct in the exceptional cases of $j=1$ and $j=n-1$ when terms with out-of-range indices
are disregarded. We conclude
\begin{equation}
\label{chijkmtau5}
\begin{aligned}
&\chi^R_{j-1,m,k}\big(A_\g(n_{x,y})\big)\ =  \  e\big(\, \textstyle{\sum_{|i-j|\ge 2}}\ k_i\,x_i \ + \ (d k_{j-1}
-bm)x_{j-1}\ +
\\
&\qquad\qquad\qquad\qquad\qquad + \ (am-ck_{j-1})y_{j-1} + c\,k_{j+1}\,y_j + dk_{j+1}x_{j+1} \big),
\\
&\chi^S_{j,m,k}\big(A_\g(n_{x,y})\big)\ =  \  e\big(\, \textstyle{\sum_{|i-j|\ge 2}}\ k_i\,x_i \ + \ dk_{j-1}x_{j-1} \
- \ ck_{j-1}\,y_{j-1}\ +
\\
&\qquad\qquad\qquad\qquad\qquad\qquad + \ (d k_{j+1} + bm)x_{j+1}\ + \ (am+ck_{j+1})y_{j} \big)\,.
\end{aligned}
\end{equation}
These identities, too, remain valid in the exceptional cases when properly interpreted. In particular,
\begin{equation}
\label{chijkmtau6}
\begin{aligned}
&a\,m = c\,k_{j-1} \ \ \ \ \Longrightarrow \ \ \ \ \chi^R_{j-1,m,k}\big(A_\g(n)\big)= \chi^S_{j,\widetilde m,
\widetilde k}(n)\ \ \text{with}\,\ \widetilde m = c\,k_{j+1}\,,
\\
&\qquad\qquad\qquad \widetilde k_{j-1} =  d\, k_{j-1} -b \, m\,, \ \ \widetilde k_{j+1} = d\,k_{j+1}\,, \ \ \widetilde
k_i = k_i \ \ \text{otherwise\,;}
\\
&a\,m = -c\,k_{j+1} \ \ \, \Longrightarrow \, \ \ \chi^S_{j,m,k}\big(A_\g(n)\big)= \chi^R_{j-1,\widetilde m, \widetilde
k}(n)\ \ \text{with}\,\ \widetilde m = - c\,k_{j-1}\,,
\\
&\qquad\qquad\qquad \widetilde k_{j-1} =  d\, k_{j-1}\,, \ \ \widetilde k_{j+1} =  d\,k_{j+1}+ b\,m\,, \ \ \widetilde
k_i = k_i \ \ \text{otherwise\,.}
\end{aligned}
\end{equation}
Once more this must be properly interpreted in the exceptional cases. The first of the five equalities in the lemma
follows from the second half of (\ref{chijkmtau6}), with $\,\g^{-1}\,$ in place of $\,\g\,$, the second from the
first half of (\ref{chijkmtau6}), again with $\,\g^{-1}\,$ in place of $\,\g\,$, and third follows directly from
(\ref{chijkmtau6}); in all three cases we also appeal to (\ref{Rjkmtau2}) and either (\ref{chijkmtau2}) or
(\ref{chijkmtau3}). For the last two equalities, we apply (\ref{chijkmtau2}), replacing $j$ with $j+1$, as well as
(\ref{chijkmtau3}) and (\ref{chijkmtau5}), in both cases with $a=d=1$, $b=-1$, $c=0$. \hfill$\square$\medskip

Recall the definition of the elementary matrices $\,E_{r,s}\,$ below (\ref{sl2emb2}). It will be convenient to use
the notation
\begin{equation}
\label{hj} h_j(t)\ = \ \Phi_j\big( \begin{smallmatrix} 1 & t \\ 0 & 1 \end{smallmatrix} \big) \ = \ \exp( t \,
E_{j,j+1}) \,.
\end{equation}
Lemma \ref{RSlem} asserts the $\,N_j(\Z)$-invariance of $\,R_{j-1,m,k}\tau\,$ and $\,S_{j,m,k}\tau\,$. Together
with $\,N_j(\Z)\,$, $\,h_j(1)\,$ generates all of $\,N(\Z)\,$. In effect, our next lemma clarifies the obstacle to
$\,N(\Z)$-invariance for $\,R_{j-1,m,k}\tau\,$ and $\,S_{j,m,k}\tau\,$.

\begin{lem}
\label{RSlem2} The $R_{j,m,k}\tau$ do not depend on $k_{j+1}$ and the $S_{j,m,k}\tau$ do not depend on $k_j$. For
$\,1 \le j \le n-2\,$,\,\ $\,\ell\big(h_{j+1}(-\frac{k_{j}}{m})\big)R_{j,m,k}\tau\,$ depends on $\,k_j\,$ only
modulo $\,m\,$, and $\,\ell \big(h_{j} (\frac{k_{j+1}}{m}) \big)S_{j,m,k}\tau\,$ depends on $\,k_{j+1}\,$ only
modulo $\,m\,$.
\end{lem}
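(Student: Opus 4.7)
The plan is to derive all four assertions directly from the definitions in (\ref{Rjkmtau}) together with the last pair of transformation identities in \lemref{RSlem}; there is no substantive analytic input beyond what has already been set up.

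The two independence statements are immediate from the integration domains. In the definition of $R_{j,m,k}\tau$ the coordinate $x_{j+1}$ is constrained to be $0$, so the summand $k_{j+1}x_{j+1}$ of $k\cdot x$ drops out of the character $e(k\cdot x + m y_j)$; hence $R_{j,m,k}\tau$ does not depend on $k_{j+1}$. The same reasoning, with $x_j=0$, yields independence of $S_{j,m,k}\tau$ from $k_j$.

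For the remaining two assertions I would invoke the identities
\[
\ell(h_{j+1}(1))\,R_{j,m,k}\tau \,=\, R_{j,m,k'}\tau \ \ \text{with}\ \ k'_i = k_i + m\,\d_{i,j}\,,
\]
\[
\ell(h_j(1))\,S_{j,m,k}\tau \,=\, S_{j,m,k''}\tau \ \ \text{with}\ \ k''_i = k_i - m\,\d_{i,j+1}\,,
\]
from the end of \lemref{RSlem}. Since $h_{j+1}$ is a one-parameter subgroup and $\ell$ a left action, $\ell(h_{j+1}(s))\ell(h_{j+1}(t)) = \ell(h_{j+1}(s+t))$. Replacing $k_j$ by $k_j + m$ in the expression $\ell(h_{j+1}(-k_j/m))\,R_{j,m,k}\tau$ therefore yields
\[
\ell\!\big(h_{j+1}(-(k_j+m)/m)\big)\,R_{j,m,k'}\tau \,=\, \ell(h_{j+1}(-k_j/m - 1))\,\ell(h_{j+1}(1))\,R_{j,m,k}\tau \,=\, \ell(h_{j+1}(-k_j/m))\,R_{j,m,k}\tau,
\]
so this quantity is invariant under $k_j \mapsto k_j + m$. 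The parallel manipulation of the $S$-identity, shifting $k_{j+1}$ by $-m$ and using $\ell(h_j(s))\ell(h_j(t))=\ell(h_j(s+t))$, proves the last assertion.

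The proof reduces to one-parameter-subgroup bookkeeping and so presents no real obstacle; the only care required is matching the choice of $h_j$ versus $h_{j+1}$ and the sign in the prefactor to the particular shift of $k_j$ or $k_{j+1}$ dictated by the corresponding transformation identity in \lemref{RSlem}. In effect, \lemref{RSlem2} is just a convenient restatement of the last part of \lemref{RSlem}, packaged in the form needed for the subsequent Voronoi computation.
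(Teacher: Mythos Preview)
Your proof is correct and follows essentially the same approach as the paper: both arguments read off the independence statements directly from the vanishing of $x_{j+1}$ (resp.\ $x_j$) in the defining integrals, and both deduce the modulo-$m$ dependence from the final transformation identities of \lemref{RSlem}, observing that a shift of $k_j$ by $m$ is undone by the normalizing translate $\ell(h_{j+1}(-k_j/m))$. Your version is slightly more explicit in writing out the one-parameter-group cancellation, but the content is the same.
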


\begin{proof}
In the integral (\rangeref{Rjkmtau}{Rjk1tau}) defining $R_{j,m,k}\tau$, the variable $x_{j+1}$ is set equal to
zero, so effectively the exponential factor does not involve $k_{j+1}$. That makes $R_{j,m,k}\tau$ independent of
$k_{j+1}$. According to lemma \ref{RSlem}, for $1 \le j \le n-2$, increasing the index $k_j$ by $m$ has the same
effect on $\,R_{j,m,k}\tau\,$ as a translation by $\,h_{j+1}(1)\,$, so
$\,\ell\big(h_{j+1}(-\frac{k_{j}}{m})\big)R_{j,m,k}\tau\,$ depends on $\,k_j\,$ only modulo $\,m\,$. The analogous
assertions about the $S_{j,m,k}\tau$ are proved the same way.
\end{proof}

The assertions of \lemref{RSlem} become more transparent when stated in terms of the re-normalized quantities
$\,\ell (h_{j+1}(-\frac{k_{j}}{m}) )R_{j,m,k}\tau\,$ and $\,\ell (h_{j} (\frac{k_{j+1}}{m}) )S_{j,m,k}\tau\,$:

\begin{lem}
\label{RSlem3} Define $\,m_j = \gcd(m,k_j)\,$, the greatest common divisor of $\,m\,$ and $\,k_j\,$, and choose
$\,\overline {k}_j\in \Z\,$ so that $\,\overline{k}_j \,k_j \equiv m_j \imod m\,$. Then
\[
\ell \left( \Phi_1 \big( \begin{smallmatrix} 0 \, & \, m_2/m \\  -m/m_2 \, & \, \overline k_2 \end{smallmatrix}
\big)\right) R_{0,1,(k_1,m_2,k_3,\dots,k_{n-1})}\tau \ \ = \ \ \ell\big(h_1(\smallf{k_2}m)\big)S_{1,m,k}\tau \,,
\]
and
\[
\begin{aligned}
&\ell\left( \Phi_{n-1}\big(\begin{smallmatrix} 0 & -m_{n-2}/m \\ m/m_{n-2} & \overline k_{n-2}
\end{smallmatrix}\big)\right) S_{n-1,1,(k_1,\dots,k_{n-3},m_{n-2},k_{n-1})}\tau \ \ =
\\
&\qquad\qquad\qquad\qquad\qquad\qquad\qquad = \ \ \ell \big(h_{n-1}(-\smallf{k_{n-2}}{m}) \big)R_{n-2, m, k}\,\tau \,.
\end{aligned}
\]Finally, for $2 \le j \le n-2$\,,
\[
\begin{aligned}
&\ell\left( \Phi_j\big(\begin{smallmatrix} 0 & -m_{j+1}/m \\ m/m_{j+1} & 0 \end{smallmatrix}\big)\right) \left( \ell
\big(h_{j} (\smallf{k_{j+1}}{m}) \big) S_{j,m,k}\tau \right) \ = \ \,\ell \big(h_j(-\smallf{\widetilde
k_{j-1}}{\widetilde m}) \big) R_{j-1,\widetilde m,\widetilde k}\,\tau \,,
\\
&\text{with}\, \ \widetilde m  =  m\,k_{j-1}/m_{j+1}\,, \ \widetilde k_{j-1} = k_{j-1}\,\overline{k}_{j+1}\,, \
\widetilde k_{j+1} =  m_{j+1}\,, \ \widetilde k_i  = k_i  \ \text{otherwise}\,.
\end{aligned}
\]
\end{lem}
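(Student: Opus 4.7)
The plan is to derive each identity from the corresponding case of \lemref{RSlem} by specializing $\gamma \in SL(2,\Z)$ and then Bruhat-factoring $\Phi_j(\gamma^{-1})$ so that the two unipotent factors produce exactly the renormalizing translations $h_j(\cdot)$ appearing in the statement.

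For the middle identity, fix $j$ with $2 \le j \le n-2$ and take $a = k_{j+1}/m_{j+1}$, $c = -m/m_{j+1}$, $d = \overline{k}_{j+1}$, and $b = (m_{j+1} - \overline{k}_{j+1} k_{j+1})/m$. All four entries are integers: the first two because $m_{j+1} = \gcd(m,k_{j+1})$ divides both $m$ and $k_{j+1}$, and $b$ by the defining congruence of $\overline{k}_{j+1}$. One checks $ad-bc=1$ and $am = -c\,k_{j+1}$ directly, so the third case of \lemref{RSlem} produces $\ell(\Phi_j(\gamma^{-1})) S_{j,m,k}\tau = R_{j-1,\widetilde m,\widetilde k}\tau$ with $\widetilde m$, $\widetilde k_{j-1}$, $\widetilde k_{j+1}$ precisely the values stated in \lemref{RSlem3}.

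The key ingredient is then the factorization
\[
\gamma^{-1}\, =\, \begin{pmatrix} 1 & \overline{k}_{j+1}\, m_{j+1}/m \\ 0 & 1 \end{pmatrix} \begin{pmatrix} 0 & -m_{j+1}/m \\ m/m_{j+1} & 0 \end{pmatrix} \begin{pmatrix} 1 & k_{j+1}/m \\ 0 & 1 \end{pmatrix},
\]
verified by direct $2 \times 2$ matrix multiplication. Applying the homomorphism $\Phi_j$ converts the outer factors into $h_j(\overline{k}_{j+1}\, m_{j+1}/m)$ and $h_j(k_{j+1}/m)$; since $\ell$ is a group action via a homomorphism, moving the leftmost factor to the right-hand side (as its inverse) and invoking the simplification $\overline{k}_{j+1}\, m_{j+1}/m = \widetilde k_{j-1}/\widetilde m$ yields the claimed identity.

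The first and second identities are proved identically, starting from the first (resp.\ second) case of \lemref{RSlem}. For the first identity one takes $\gamma = \(\begin{smallmatrix} \overline{k}_2 & (\overline{k}_2 k_2 - m_2)/m \\ m/m_2 & k_2/m_2 \end{smallmatrix}\)$; because the $R_{0,1,\cdot}$ side carries the parameter ``$m$''$\,=\,1$, only a single unipotent factor arises in the factorization of $\gamma^{-1}$, producing the lone renormalization $h_1(k_2/m)$ on the right-hand side, and the second identity is entirely symmetric. The main obstacle is purely bookkeeping: tracking signs and inversions so that every fractional translation emerges with exactly the prescribed argument. No new ideas beyond \lemref{RSlem} and routine $SL(2)$ algebra are needed.
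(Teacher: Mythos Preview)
Your proposal is correct and follows essentially the same approach as the paper: specialize $\gamma=\left(\begin{smallmatrix} a & b \\ c & d \end{smallmatrix}\right)\in SL(2,\Z)$ in each case of \lemref{RSlem}, then use the $2\times 2$ matrix identity factoring $\gamma^{-1}$ through the anti-diagonal (or nearly anti-diagonal) matrix and unipotent flanking factors to produce the renormalizing translations. Your Bruhat factorization of $\gamma^{-1}$ for the middle case is exactly the rearrangement of the paper's identity (\ref{thirdrelation2}), and your choices of $a,b,c,d$ agree with the paper's in all three cases.
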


\begin{proof}
Each of these identities follows from its counterpart in \lemref{RSlem}. Let $c = \f m{m_2}$, $d= \f{k_2}{m_2}$,
and $a= \overline k_2$. By definition of $\overline k_2$, there exists $\,b\in \Z\,$ such that $ad = k_2
\,\overline k_2 / m_2 = 1 + b \, m/m_2 = 1 + bc\,$. Then $\,d\,m = c\,k_{2}\,$, so the first identity in lemma
\ref{RSlem} asserts
\begin{equation}
\label{firstrelation} \ell \left( \Phi_1 \big( \begin{smallmatrix} k_2/m_2 \, & \, -b \\  -m/m_2 \, & \, \overline k_2
\end{smallmatrix}\big) \right) R_{0,1,(k_1,m_2,k_3,\dots,k_{n-1})}\tau \ \ = \ \ S_{1,m,k}\tau \,.
\end{equation}
The first assertion of the current lemma follows, since
\begin{equation}
\label{firstrelation1} \big( \begin{smallmatrix} 1 \, & \, k_2/m \\  0 \, & \, 1 \end{smallmatrix} \big) \big(
\begin{smallmatrix} k_2/m_2 \, & \, -b \\  -m/m_2 \, & \, \overline k_2 \end{smallmatrix}\big)\ = \ \big(
\begin{smallmatrix} 0 \, & \, m_2/m \\  -m/m_2 \, & \, \overline k_2 \end{smallmatrix} \big)\ .
\end{equation}
For the verification of the second assertion we let $\,a = \overline k_{n-2}\,$, $\,c = -m/m_{n-2}\,$, $\,d =
k_{n-2}/m_{n-2}\,$, and choose $\,b\,$ so that $\,ad = k_{n-2}\overline k_{n-2}/m_{n-2} = 1 - b m /m_{n-2} = 1 +
bc\,$. Then $\,d\,m = -c\,k_{n-2}\,$, and the second identity in lemma \ref{RSlem} implies
\begin{equation}
\label{secondrelation} \ell \left(\Phi_{n-1}\big( \begin{smallmatrix} k_{n-2}/m_{n-2} \, & \, -b \\  m/m_{n-2} \, & \,
\overline k_{n-2}
\end{smallmatrix} \big) \right) S_{n-1,1,(k_1,\dots,k_{n-3},m_{n-2},k_{n-1})}\tau\, =\,  R_{n-2,m,k}\tau \,.
\end{equation}
We obtain the second assertion by applying $\,\ell(h_{n-1}(-\frac{k_{n-2}}{m}))\,$ to both sides.

For the verification of the third assertion we suppose that $\,2 \le j \le n-2\,$, set $\,a = k_{j+1}/m_{j+1}\,$,
$\,c= - m/m_{j+1}\,$, $\, d = \overline k_{j+1}\,$, and we choose $\,b\,$ so that $\,ad = k_{j+1} \overline
k_{j+1}/m_{j+1} = 1 -b m/m_{j+1} = 1 + b c\,$. Then $\,a\,m\,$ does equal $\,-c \, k_{j+1}\,$, so we can apply the
third identity in lemma \ref{RSlem}, with
\begin{equation}
\label{thirdrelation} \widetilde m \, = \, m \, k_{j-1} / m_{j+1}\,,\ \ \ \widetilde k_{j-1}\, = \, k_{j-1}\,\overline
k_{j+1}\,,\ \ \ \widetilde k_{j+1} \, =  \, m_{j+1}\,,
\end{equation}
which then reads as follows:
\begin{equation}
\label{thirdrelation1} \ell \left( \Phi_j\big( \begin{smallmatrix} \overline k_{j+1} \, & \, -b \\  m/m_{j+1} \, & \,
k_{j+1}/m_{j+1} \end{smallmatrix} \big) \right) S_{j,m,k}\ \ = \ \ R_{j-1,\widetilde m,\widetilde k}\,\tau \,.
\end{equation}
At this point, the matrix identity
\begin{equation}
\label{thirdrelation2} \big( \begin{smallmatrix} 1 \, & \, - \widetilde k_{j-1}/\widetilde m \\  0 \, & \, 1
\end{smallmatrix} \big) \big( \begin{smallmatrix} \overline k_{j+1} \, & \, -b \\  m/m_{j+1} \, & \, k_{j+1}/m_{j+1}
\end{smallmatrix}\big) \big( \begin{smallmatrix} 1 \, & \, -k_{j+1}/m \\  0 \, & \, 1 \end{smallmatrix} \big) \ = \
\big( \begin{smallmatrix} 0 \, & \, - m_{j+1}/m \\  m/m_{j+1} \, & \, 0 \end{smallmatrix} \big)
\end{equation}
completes the verification of the third assertion.
\end{proof}

\begin{lem}
\label{RSlem4} For $\,1 \le j \le n-2\,$, $\,m \in \Z_{\neq 0}\,$, $\,k \in \Z^{n-1}\,$,
\[
\begin{aligned}
&\ell\big(h_{j+1}({\textstyle{-\frac{k_{j}}{m}}})\big)R_{j,m,k}\tau \ =
\\
&\ \ \ = \, {\sum}_{r\in\Z/m\Z} \ e\left(\smallf{r\,k_j}{m}  \right)\,\int_\R \! \ell(h_{j}(t))\, \ell \big(h_{j}
(\smallf {r}{m})\big) S_{j,m,(k_1,\,\dots,\,k_j, r,k_{j+2},\,\dots,\,k_{n-1})}\tau\, dt\,.
\end{aligned}
\]
The integrals converge in the strong distribution topology and depend on the index $\,r\,$ only modulo $\,m\,$, as
indicated by the notation.
\end{lem}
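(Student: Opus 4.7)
\bigskip

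\noindent \emph{Proof plan for \lemref{RSlem4}.} My strategy is to start from the right-hand side and reduce it to the left. First, in each summand, the change of variables $s=t+r/m$ absorbs the $\ell(h_j(r/m))$ factor into the $t$-integration, giving $\int_\R \ell(h_j(s)) S_{j,m,(k_1,\dots,k_j,r,k_{j+2},\dots,k_{n-1})}\tau\,ds$. That this is unchanged when $r$ is shifted by $m$ follows directly from \lemref{RSlem} (last set of identities), which shows that $\ell(h_j(\ell))\,S_{j,m,k}\tau\,=\,S_{j,m,\widetilde k}\tau$ with $\widetilde k_{j+1}=k_{j+1}-\ell m$; this same identity also establishes convergence of the $t$-integral in the strong distribution topology, by unfolding.

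Next, write $\int_\R\,=\,\sum_{\ell\in\Z}\int_0^1$ via $s=x+\ell$ and use \lemref{RSlem} again to replace $\ell(h_j(\ell))S_{j,m,(\dots,r,\dots)}\tau$ by $S_{j,m,(\dots,r-\ell m,\dots)}\tau$. The double summation $\sum_{r\in\Z/m\Z}\sum_{\ell\in\Z}$ repackages via $r'=r-\ell m$ into a single sum $\sum_{r'\in\Z}$, and $e(rk_j/m)=e(r'k_j/m)$ since $\ell k_j\in\Z$. Thus the right-hand side equals
\[
{\sum}_{r'\in\Z}\,e(r'k_j/m)\int_0^1 \ell(h_j(x))\,S_{j,m,(k_1,\dots,k_j,r',k_{j+2},\dots,k_{n-1})}\tau\,dx.
\]

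Now expand $S_{j,m,(\dots,r',\dots)}\tau$ using the integral (\ref{Rjkmtau}) and apply $\ell(h_j(x))$ inside. A direct matrix calculation shows $h_j(x)\,n_{x',y'}$, for $x'_j=0$, equals $n_{\widetilde x,\widetilde y}\,n''$ with $\widetilde x_j=x$, $\widetilde y_j=y'_j+x\,x'_{j+1}$, other entries unchanged, and $n''\in N''(\R)$; the $N''(\R)$-invariance of $\tau''$ makes $n''$ disappear. Substitute $\widetilde y_j$ back as the integration variable (legitimate because $\ell(n_{x,y})\tau''$ is periodic in $y_j$: the shift $y_j\mapsto y_j+1$ is effected, modulo $N''$, by the $N(\Z)$ element $\exp(E_{j,j+2})$). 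This produces an extra phase $e(-mx\,x'_{j+1})$ from $e(my_j)$. Interchange the $r'$ summation with the $x'_{j+1}$ integration: $\sum_{r'\in\Z}e(r'(x'_{j+1}+k_j/m))$ is, by Poisson, the periodic delta $\delta(x'_{j+1}+k_j/m)$ on $\R/\Z$, collapsing $x'_{j+1}$ to $-k_j/m$. The phase $e(-mx\,x'_{j+1})$ at $x'_{j+1}=-k_j/m$ becomes $e(k_j x)$, which is precisely the missing $k_jx_j$ term of $e(k\cdot x)$.

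Finally, compare with the left-hand side, computed in the opposite direction: bringing $\ell(h_{j+1}(-k_j/m))$ inside $R_{j,m,k}\tau$, writing $h_{j+1}(-k_j/m)\,n_{x,y}$ (for $x_{j+1}=0$) as $n_{\widetilde x,\widetilde y}\,n''$ with $\widetilde x_{j+1}=-k_j/m$ and $n''\in N''(\R)$, and invoking the $N''(\R)$-invariance of $\tau''$, shows that $\ell(h_{j+1}(-k_j/m))R_{j,m,k}\tau$ is exactly the integral $\int e(\sum_{i\neq j+1}k_ix_i+my_j)\,\ell(n_{x,y})\tau''\,dx\,dy$ restricted to $x_{j+1}=-k_j/m$. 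This matches the repackaged right-hand side term-by-term, proving the identity. The main obstacle is purely bookkeeping: keeping track of the change of variable in $y_j$ that produces the phase $e(-mx\,x'_{j+1})$, and verifying that this phase exactly supplies the $e(k_j x_j)$ term after the Poisson collapse~-- this is the point at which the two sides align.
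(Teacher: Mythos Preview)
Your proof is correct and follows essentially the same route as the paper's: both hinge on the commutation $h_j(u)h_{j+1}(v)=h_{j+1}(v)h_j(u)\exp(uvE_{j,j+2})$ and the Fourier duality between the $R$'s and $S$'s, with your Poisson collapse playing the role of the paper's Fourier expansion $P_{j,m,k}\tau=\sum_i S_{j,m,(\dots,i,\dots)}\tau$. The paper organizes the computation through the intermediary $P_{j,m,k}\tau$ (the integral with both $x_j=x_{j+1}=0$), working from LHS to RHS, whereas you unpack the defining integrals directly in coordinates and go RHS to LHS; the convergence justification in your sketch (``by unfolding'') is exactly the paper's observation that $\sum_i S_{j,m,(\dots,i,\dots)}\tau$ converges in the strong distribution topology as the Fourier series of a continuous circle action.
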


\begin{proof}
In order to relate the $\,R_{j,m,k}\tau\,$ to the $\,S_{j,m,k}\tau\,$, we express them both in terms of a
projection $\,P_{j,k,m}\tau\,$,\,\ whose definition is similar to that of $\,R_{j,k,m}\tau\,$ and
$\,S_{j,k,m}\tau\,$\,:
\begin{equation}
\label{Pjkmtau}
P_{j,m,k}\tau \ = \ \int_{(\R/\Z)^{n-2}}\int_{\left\{\begin{smallmatrix}{x\in (\R/\Z)^{n-1}} \\
{x_j = x_{j+1} = 0\ }\end{smallmatrix}\right\}} e(k\cdot x\,+\, my_j)\,\, \ell(n_{x,y})\tau''\, dx\, dy\,.
\end{equation}
Evidently $P_{j,m,k}\tau$ does not depend on $k_j$ or $k_{j+1}$. Recall the definition of the elementary matrices
$\,E_{r,s}\,$ above (\ref{hj}), and note that $\,\tau''\,$ is invariant under translation by $\,N''(\R)\,$, which
is normal in $N(\R)$. Since $\exp(u E_{j,j+2})\,n_{x,y} \equiv n_{x, \widetilde y}\,$ modulo $N''(\R)$, with
$\widetilde y_i = y_i + \d_{i,j}\, u\,$,
\begin{equation}
\label{Pjkmtau1}
\begin{aligned}
&\ell\big(\exp(u\,E_{j,j+2})) P_{j,m,k}\tau \ =
\\
&\qquad =\ \int_{(\R/\Z)^{n-2}}\int_{\left\{\begin{smallmatrix}{x\in (\R/\Z)^{n-1}}\\
{x_j = x_{j+1} = 0\ }\end{smallmatrix}\right\}} e(k\cdot x\,+\, m(y_j-u))\,\, \ell(n_{x,y})\tau''\, dx\, dy
\\
&\qquad = \ e(-m\,u)\,P_{j,m,k}\tau\,.
\end{aligned}
\end{equation}
A simple computation, which can be reduced to the case of $GL(3,\R)$, shows that
\begin{equation}
\label{hjhj} h_j(u)\,h_{j+1}(v)\ = \ h_{j+1}(v)\,h_j(u)\,\exp(u\,v\,E_{j,j+2})\,.
\end{equation}
Comparing (\ref{Rjkmtau}) to (\ref{Pjkmtau}), we find
\begin{equation}
\label{Pjkmtau2} R_{j,m,k}\tau \ = \ \int_0^1 e(k_j\,t)\, \ell(h_j(t))\,P_{j,m,k}\tau\, dt\,,
\end{equation}
and similarly
\begin{equation}
\label{Pjkmtau3} S_{j,m,k}\tau \ = \ \int_0^1 e(k_{j+1}\,t)\, \ell(h_{j+1}(t))\,P_{j,m,k}\tau\, dt\,.
\end{equation}
This last identity exhibits $\,S_{j,m,k}\tau\,$ as one of the Fourier components of $\,P_{j,m,k}\tau\,$ with
respect to the action of a circle group -- recall that $\,P_{j,m,k}\tau\,$ does not depend on $\,k_{j+1}\,$ whereas
$\,S_{j,m,k}\tau\,$ does depend on $\,k_{j+1}$ -- and consequently
\begin{equation}
\label{Pjkmtau4} P_{j,m,k}\tau \ = \ {\sum}_{i\in\Z} \ S_{j,m,(k_1,\,\dots,\,k_j,\,i,k_{j+2},\,\dots,\,k_{n-1})}\tau
\end{equation}
is the sum of all the Fourier components. This sum converges in the strong distribution topology since the circle
action is continuous with respect to that topology.

Using, in order, (\ref{Pjkmtau2}), (\ref{hjhj}), (\ref{Pjkmtau1}), (\ref{Pjkmtau4}) and the transformation behavior
of the Fourier component $\,S_{j,m,k}\tau\,$ under the action of $\,h_{j+1}(t)\,$, we find
\begin{equation}
\label{Pjkmtau5}
\begin{aligned}
&\ell\big(h_{j+1}({\textstyle{-\frac{k_{j}}{m}}})\big)R_{j,m,k}\tau \ = \, \int_0^1 \! e(k_j\,t) \,
\ell\big(h_{j+1}({\textstyle{-\frac{k_{j}}{m}}})\,h_{j}(t)\big) P_{j,m,k}\tau\, dt \ =
\\
&\ \ \ \ = \, \int_0^1 \! e(k_j\,t)\,\ell\big(h_{j}(t)\,
h_{j+1}({\textstyle{-\frac{k_{j}}{m}}})\,\exp(\smallf{k_j\,t}{m}\,E_{j,j+2})\big)  P_{j,m,k}\tau\, dt
\\
&\ \ \ \ = \, \int_0^1 \ell\big(h_{j}(t)\,h_{j+1}({\textstyle{-\frac{k_{j}}{m}}})\big)  P_{j,m,k}\tau\, dt
\\
&\ \ \ \ = \, {\sum}_{i\in\Z} \, \int_0^1 \ell \big(h_{j}(t)\, h_{j+1}({\textstyle{-\frac{k_{j}}{m}}})\big)  \,
S_{j,m,(k_1,\,\dots,\,k_j,\,i,k_{j+2},\,\dots,\,k_{n-1})}\tau\, dt
\\
&\ \ \ \ = \, {\sum}_{i\in\Z} \, \int_0^1 e\left(\smallf{i\,k_j}{m}  \right)\,\ell(h_{j}(t))\,
S_{j,m,(k_1,\,\dots,\,k_j,\,i,k_{j+2},\,\dots,\,k_{n-1})}\tau\, dt
\\
&\ \ \ \ = \, \sum_{r=0}^{m-1}\, {\sum}_{i\in\Z} \int_0^1 \! e\left(\smallf{r\,k_j}{m}  \right)\,\ell(h_{j}(t))\,
S_{j,m,(k_1,\,\dots,\,k_j, r+im,k_{j+2},\,\dots,\,k_{n-1})}\tau\, dt
\\
&\ \ \ \ = \, {\sum}_{r\in\Z/m\Z} \ e\left(\smallf{r\,k_j}{m}  \right)\,\int_\R \! \ell(h_{j}(t))\,
S_{j,m,(k_1,\,\dots,\,k_j, r,k_{j+2},\,\dots,\,k_{n-1})}\tau\, dt\,.
\end{aligned}
\end{equation}
At the final step we have used the identity
\begin{equation}
\label{Pjkmtau6} \ell(h_j(1))S_{j,m,k}\tau\ = \ S_{j,m,(k_1,\,\dots,\,k_j, \,k_{j+1} -
m,\,k_{j+2},\,\dots,\,k_{n-1})}\tau\,,
\end{equation}
which amounts to a restatement of the final assertion of lemma \ref{RSlem}. In particular the integral in the last
line of (\ref{Pjkmtau5}) depends only on the class of $\,r\,$ modulo $\,m\,$, as claimed. The integral converges in
the strong distribution topology because the sum (\ref{Pjkmtau4}) does. Finally, shifting the variable of
integration by $\,\frac{k_{j+1}}{m}\,$, allows us to replace $\,S_{j,m,(\dots,r,\dots)}\tau\,$ by its
$\,\ell(h_j(\frac{r}{m}))$-translate.
\end{proof}

By construction $\,R_{j-1,m,k}\tau\,$ and $\,S_{j,m,k}\tau\,$ transform under the left action of $N_j(\R)$
according to characters of that group -- recall (\ref{Rjkmtau2}). Like any vector in $V^{-\infty}_{\l,\d}$ they
also transform under the right action of $\,B_-(\R)\,$ according to the inducing character; cf. (\ref{autod}).
Since $\,N_j(\R)\cdot\operatorname{Im}\Phi_j\cdot B_-(\R)\,$ is open in $G(\R)$, we can legitimately restrict
$\,R_{j-1,m,k}\tau\,$ and $\,S_{j,m,k}\tau\,$ to the image of $\Phi_j$\,. In analogy to (\ref{Vld}), pairs
$(\nu,\eta)\in \C \times \Z/2\Z\,$ parameterize principal series representations of $SL(2,\R)$\,:
\begin{equation}
\label{Wmueta}
\begin{aligned}
W_{\nu,\eta}\ = \ \{\,f : SL(2,\R)\to \C \, \big| \,f\big( g \left(
\begin{smallmatrix} a & 0 \\ c & a^{-1}
\end{smallmatrix}\right)\big) = |a|^{1-\nu} (\sgn a)^\eta
f(g)\,\}\,;
\end{aligned}
\end{equation}
$W^{-\infty}_{\nu,\eta}$ shall denote the space of distribution vectors. Under right translation by elements of
$\,\operatorname{Im}\Phi_j\cap B_-(\R)\,$, $R_{j-1,m,k}\tau$ and $S_{j,m,k}\tau$ transform according to the
(restriction of) the inducing character. Thus
\begin{equation}
\label{RSrestr} (R_{j-1,m,k}\tau )\circ \Phi_j \,,\ (S_{j,m,k}\tau ) \circ \Phi_j \ \ \in \ \
W^{-\infty}_{\l_{j}-\l_{j+1}\,,\,\d_{j}-\d_{j+1}} \,.
\end{equation}
One can restrict $\,R_{j-1,m,k}\tau\circ \Phi_{j}\,$ and $\,S_{j,m,k}\tau\circ \Phi_j\,$ to the upper triangular
unipotent subgroup of $\,SL(2,\R)\,$, just as it is legitimate to restrict $\,\tau\,$ to $\,N(\R)\,$. Thus we can
define distributions of one variable $\,\rho_{j,m,k}\,$, $\,\sigma_{j,m,k}\,$ by the equations
\begin{equation}
\label{rhosigma}
\begin{aligned}
&\rho_{j,m,k}(x) \ = \ \left( \ell\big(h_{j+1} (- \smallf{k_{j}}{m} ) \big)R_{j,m,k}\tau \right) \circ \Phi_{j+1}\big(\begin{smallmatrix} \, 1 \, & \, -x \, \\
\, 0 \, & \, 1 \,
\end{smallmatrix}\big)\,,
\\
&\qquad \sigma_{j,m,k}(x) \ = \ \left( \ell\big(h_{j} ( \smallf{k_{j+1}}{m} ) \big)S_{j,m,k}\tau \right) \circ \Phi_j
\big(\begin{smallmatrix} 1 \, &  x \\ 0 & \, 1
\end{smallmatrix}\big)\,;
\end{aligned}
\end{equation}
in the exceptional cases (\ref{Rjk1tau}) the integers $k_0$, $k_n$ should be interpreted as zero.

At the extremes, $\rho_{0,1,k}$ and $\sigma_{n-1,1,k}$ are expressible in terms of the Fourier coefficients
$\,c_k\,$ of $\,\tau_{\text{abelian}}\,$,\,\ and hence, via (\ref{ctoa}), also in terms of the $\,a_k\,$:

\begin{lem}\label{Rjk1taulem}
$\ \rho_{0,1,k}(x)\ = \ \sum_{r \neq 0}\, c_{r, k_2, k_3,\dots , k_{n-1}}\, e(- r x)\ $ and $\ \sigma_{n-1,1,k}(x)\
= \ \sum_{r \neq 0}\, c_{k_1,\dots , k_{n-2}, r}\, e(r x)\,$.
\end{lem}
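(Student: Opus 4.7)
\medskip
\noindent\emph{Proof plan.} I sketch the argument for $\rho_{0,1,k}$; the formula for $\sigma_{n-1,1,k}$ follows by the same reasoning with $h_{n-1}(x)$ in place of $h_1(-x)$ and with the hypothesis $\xi_{n-1}=0$ replacing $\xi_1=0$. Since the convention is $k_0=0$, the renormalizing translate in (\ref{rhosigma}) is trivial, and unwinding (\ref{Rjk1tau}) gives
\[
\rho_{0,1,k}(x)\ =\ \int_{y\in(\R/\Z)^{n-2}}\ \int_{\srel{\xi\in(\R/\Z)^{n-1}}{\xi_1\,=\,0}} e(k\cdot\xi)\, \tau''\bigl(n_{\xi,y}^{-1}\, h_1(-x)\bigr)\, d\xi\, dy
\]
as a distribution in $x$.

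The strategy is to recognize the right-hand side as a Fourier slice of $\tau_{\text{abelian}}$. Since $\tau''$ is left-$N''(\R)$-invariant, the value $\tau''(n_{\xi,y}^{-1}\, h_1(-x))$ depends only on the class of its argument modulo $N''(\R)$, which is parameterized by the first two superdiagonals via the cross-section $\{n_{x,y}\}$. A direct matrix computation, together with the hypothesis $\xi_1=0$, shows
\[
n_{\xi,y}^{-1}\, h_1(-x)\ \equiv\ n_{(-x,\,-\xi_2,\,\dots,\,-\xi_{n-1}),\, -y+\zeta(\xi)}\pmod{N''(\R)}
\]
for some bilinear correction $\zeta(\xi)\in\R^{n-2}$. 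Changing the variable of integration $y\mapsto -y+\zeta(\xi)$ -- legitimate because $\zeta(\xi)$ is independent of $y$, and Haar measure on the torus $(\R/\Z)^{n-2}$ is invariant under translation and reflection -- removes all $\xi$-dependence from inside the $y$-integral. An analogous reduction $n_{0,y}^{-1}\,n_{\xi',0}\equiv n_{\xi',-y}\pmod{N''(\R)}$, combined with (\ref{tau'}) and (\ref{tauabelian}), identifies the remaining $y$-integration with the value of $\tau_{\text{abelian}}$ at the first-superdiagonal coordinate, yielding
\[
\rho_{0,1,k}(x)\ =\ \int_{\xi_2,\dots,\xi_{n-1}\in\R/\Z} e(k_2\xi_2+\cdots+k_{n-1}\xi_{n-1})\, \tau_{\text{abelian}}(-x,\,-\xi_2,\,\dots,\,-\xi_{n-1})\, d\xi_2\cdots d\xi_{n-1}\,.
\]
Substituting the Fourier expansion (\ref{tauabelexp}) and invoking orthogonality of characters on $(\R/\Z)^{n-2}$ kills every term except those with $k'_j=k_j$ for $2\le j\le n-1$; relabelling $r=k'_1$ then produces the claimed formula.

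The main technical issue is the bookkeeping of the $N''(\R)$-reduction: one has to verify by explicit matrix multiplication that, under $\xi_1=0$, the product $n_{\xi,y}^{-1}\,h_1(-x)$ really does lie in the cross-section $\{n_{x,y}\}$ modulo $N''(\R)$ with precisely the indicated first-superdiagonal entries, and that the error in the $y$-coordinate is a purely $\xi$-dependent affine shift, so that the change of variable on the compact torus is harmless. The associated distributional manipulations -- the restriction of $R_{0,1,k}\tau$ to the one-parameter subgroup $\{h_1(-x)\}$ and the interchange of integration orders -- are routine consequences of the framework developed in \cite{voronoi}.
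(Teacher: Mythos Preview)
Your argument is correct and is essentially the same as the paper's, though you unfold more of the details. The paper's proof is a two-sentence sketch: the $y$-integration in (\ref{Rjk1tau}) converts $\tau''$ into $\tau'$ via (\ref{tau'3}), after which the remaining $\xi$-integration picks off Fourier coefficients of the expansion (\ref{tauabelexp}) of $\tau_{\text{abelian}}$. Your explicit $N''(\R)$-reduction of $n_{\xi,y}^{-1}h_1(-x)$ and the ensuing torus change of variable $y\mapsto -y+\zeta(\xi)$ are exactly the computation that justifies the paper's first sentence; in fact $\zeta(\xi)_i=\xi_i\xi_{i+1}$, so the shift is indeed independent of $y$. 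The only difference is the order of operations: the paper integrates over $y$ \emph{before} evaluating at $h_1(-x)$, which lets it invoke (\ref{tau'3}) directly and bypass the explicit matrix bookkeeping you carry out.
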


\begin{proof}
The integrations with respect to the $y$ variables in (\ref{Rjk1tau}) convert $\tau''$ into $\tau'$, and the
integrations with respect to the remaining variables turn the series (\ref{tauabelexp}) into a Fourier series in
the first, respectively last, variable by fixing the remaining indices.
\end{proof}

The Voronoi formula amounts to a connection between the Fourier coefficients of the $\rho_{0,1,k}$ to those of the
$\sigma_{n-1,1,k}$. Our proof establishes that connection by relating the $\rho_{j-1,m,k}$ to the $\sigma_{j,m,k}$
and the $\sigma_{j,m,k}$ to the $\rho_{j,m,k}$. Lemma \ref{RSlem3} embodies a weak form of the former; we
strengthen it to a useful version in section \ref{sec:inforder}. Implicitly lemma \ref{RSlem4} relates the
$\sigma_{j,m,k}$ to the $\rho_{j,m,k}$. Our next proposition~-- which also collects some additional information~--
makes that quite explicit.

We normalize the Fourier transform $\,\mathcal Ff = \widehat f\,$ of a Schwartz function $f \in \Sch(\R)$ by the
formula
\begin{equation}
\label{ftschwartz} \widehat f(x)\ \ = \ \ \int_\R f(y)\,e(-xy)\,dy\,.
\end{equation}
The same formula expresses the Fourier transform of any $f \in \Sch'(\R)$, the space of tempered distributions, to
which the Fourier transform extends via the duality between functions and distributions. The proposition also
involves the finite Fourier transform
\begin{equation}
\label{ftfinite} \widehat a_k\ \ = \ \ {\sum}_{\ell \in \Z/m\Z}\ e\big( \textstyle\f {k \ell}m \big)\,a_\ell \qquad (\,
a = (a_k)_{k\in \Z/m\Z}\, )\,,
\end{equation}
of functions on $\,\Z/m\Z\,$, normalized following a common convention.

\begin{prop}\label{sigmarhoftprop} The $\rho_{j,m,k}$ and $\sigma_{j,m,k}$ are tempered distributions. The $\rho_{j,m,k}$ do not depend on
$k_{j+1}$ and the $\sigma_{j,m,k}$ do not depend on $k_j$. For $1 \le j \le n-2$ $\rho_{j,m,k}$ depends on $k_j$
only modulo $m$, and $\sigma_{j,m,k}$ depends on $k_{j+1}$ only modulo $m$. Still for $\,1 \le j \le n-2\,$, and
for any $\, a = (a_k)_{k\in \Z/m\Z}\,$,
\[
\begin{aligned}
&{\sum}_{\ell\in \Z/m\Z}\ a_\ell\,\rho_{j,m,(k_1,\ldots,\,k_{j-1},\,\ell,\,k_{j+1},\ldots,\,k_{n-1})}(x)\ =
\\
&\qquad\qquad =\ {\sum}_{\ell\in\Z/m\Z}\ \widehat{a}_\ell\,
    \widehat{\sigma}_{j,m,(k_1,\ldots,\,k_{j},\,\ell,\,k_{j+2},\ldots,\,k_{n-1})}(mx)\,,
\end{aligned}
\]
or equivalently,
\[
\rho_{j,m,k}(x) \ = \ {\sum}_{\ell\in\Z/m\Z}\ e\left(\smallf{k_j\ell}{m}\right) \,
\widehat{\sigma}_{j,m,(k_1,\ldots,k_j,\ell,k_{j+2},\ldots,k_{n-1})}(mx)\,.
\]
\end{prop}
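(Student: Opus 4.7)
The plan is to derive all three groups of assertions --- temperedness, the independence and mod-$m$ dependence of the indices, and the main Fourier-type identity --- from lemmas \ref{RSlem2} and \ref{RSlem4}.

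For temperedness and the independence assertions, the key observation is that $\rho_{j,m,k}$ and $\sigma_{j,m,k}$ are obtained by restricting elements of $V^{-\infty}_{\l,\d}$ to a one-parameter unipotent subgroup (the image under $\Phi_{j+1}$ or $\Phi_j$ of the upper unipotent of $SL(2,\R)$). Any such element is a smooth vector for the right $B_-(\R)$-action, so its restriction to the open Bruhat cell, and then to a one-parameter unipotent, is tempered by the same argument used in \cite{voronoi}. The asserted independence (respectively mod-$m$ dependence) of the indices transfers directly from lemma \ref{RSlem2}, once we note that the renormalizations $\ell(h_{j+1}(-k_j/m))$ and $\ell(h_j(k_{j+1}/m))$ do not involve the relevant index.

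For the main identity, I would apply lemma \ref{RSlem4} to $\ell(h_{j+1}(-k_j/m))R_{j,m,k}\tau$ and then restrict both sides to $h_{j+1}(-x)$. Writing $k^{(r)}$ for the tuple $(k_1,\dots,k_j,r,k_{j+2},\dots,k_{n-1})$ and setting $s=t+r/m$, the $r$-th summand contributes
\[
[\ell(h_j(s))\,S_{j,m,k^{(r)}}\tau](h_{j+1}(-x))\ =\ [S_{j,m,k^{(r)}}\tau](h_j(-s)\,h_{j+1}(-x)).
\]
Using the commutation (\ref{hjhj}) and the fact that $h_j(\cdot)$ commutes with $\exp(\cdot\,E_{j,j+2})$ (since $[E_{j,j+1},E_{j,j+2}]=0$), this equals
\[
[S_{j,m,k^{(r)}}\tau]\bigl(h_{j+1}(-x)\,\exp(sx\,E_{j,j+2})\,h_j(-s)\bigr).
\]
Both $h_{j+1}(-x)$ and $\exp(sx\,E_{j,j+2})$ lie in $N_j(\R)$, so by the left $N_j(\R)$-equivariance of $S_{j,m,k^{(r)}}\tau$ under $\chi^S_{j,m,k^{(r)}}$ they factor out as $e(-rx)$ (since $k^{(r)}_{j+1}=r$) and $e(msx)$, respectively. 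The remaining $[S_{j,m,k^{(r)}}\tau](h_j(-s))$ equals $\sigma_{j,m,k^{(r)}}(r/m-s)=\sigma_{j,m,k^{(r)}}(-t)$ by the definition of $\sigma_{j,m,k}$.

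The two character factors telescope to $e(-rx+msx)=e(mtx)$, so after summing over $r\in\Z/m\Z$ and integrating,
\[
\rho_{j,m,k}(x)\ =\ \sum_{r\in\Z/m\Z}e(\smallf{rk_j}{m})\int_\R e(mtx)\,\sigma_{j,m,k^{(r)}}(-t)\,dt\ =\ \sum_{r\in\Z/m\Z}e(\smallf{rk_j}{m})\,\widehat{\sigma}_{j,m,k^{(r)}}(mx),
\]
after the change of variable $t\mapsto -t$. This is the second identity in the proposition; the first is its finite Fourier inversion, legitimate because $\sigma_{j,m,k^{(r)}}$ depends on $r$ only modulo $m$. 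The main technical point is organizing the restriction--commutation--character calculation cleanly and confirming that the strong-topology convergence in lemma \ref{RSlem4} is preserved under restriction to the one-parameter unipotent; everything else is essentially bookkeeping.
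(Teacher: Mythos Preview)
Your proposal is correct and follows essentially the same approach as the paper: both arguments evaluate the identity of \lemref{RSlem4} at $h_{j+1}(-x)$, use the commutation relation (\ref{hjhj}) together with the $N_j(\R)$-equivariance of $S_{j,m,k}\tau$ to extract the character factors, and then recognize the resulting integral as $\widehat\sigma_{j,m,k^{(r)}}(mx)$. Your temperedness argument is slightly less explicit than the paper's (which pairs against Schwartz functions viewed as smooth vectors in the dual principal series $W^{\infty}_{-\nu,-\eta}$), but it points to the same mechanism, and the index-dependence statements are handled identically via \lemref{RSlem2}.
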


\begin{proof}
By construction the $\rho_{j,m,k}$ and $\sigma_{j,m,k}$ are restrictions, to the upper triangular unipotent group
in $SL(2,\R)$, of vectors in various $W^{-\infty}_{\nu,\eta}$, and such distribution vectors can be paired
continuously against vectors in the dual representations $W^{\infty}_{-\nu,-\eta}$. On the other hand, any $f \in
\mathcal S(\R)$ naturally extends to a vector in $W^{\infty}_{-\nu,-\eta}$, and thus pairs naturally and
continuously against restrictions to $\R$ of vectors in $W^{-\infty}_{\nu,\eta}$. This proves the temperedness of
the $\rho_{j,m,k}$ and $\sigma_{j,m,k}$. The assertions about the dependence on the $\,k_i\,$ follow directly from
the corresponding statements in \lemref{RSlem2}.

In order to relate the $\rho_{j,m,k}$ to the $\sigma_{j,m,k}$, we evaluate both sides of the identity in
\lemref{RSlem4} on  $\,h_{j+1}(-x)\,$. In view of (\ref{hjhj}), with $\,u = -t-r/m\,$, $\,v= -x\,$, and
(\ref{Pjkmtau1}),
\begin{equation}
\label{rhosigma2}
\begin{aligned}
& \big(\ell(h_{j}(t))\, \ell \big(h_{j} (\smallf {r}{m})\big) S_{j,m,(\dots, r,\dots )}\tau \big)\big( h_{j+1}(-x)
\big)\ =
\\
&\ \ \ \ =\ e\big(x(r+mt)\big)\,\big( \ell \big(h_{j} (\smallf {r}{m})\big) \ell(h_{j+1}(x))S_{j,m,(\dots, r,\dots
)}\tau \big) \big( h_{j}(-t) \big)
\\
&\ \ \ \ =\ e(m\,t)\,\big( \ell \big(h_{j} (\smallf {r}{m})\big) S_{j,m,(\dots, r,\dots )}\tau \big) \big( h_{j}(-t)
\big)\,.
\end{aligned}
\end{equation}
The second step uses the identity $\,\ell(h_{j+1}(x))S_{j,m,(\dots, r,\dots )}\tau\, = e(-r\,x)S_{j,m,(\dots,
r,\dots )}\tau\,$, which follows from (\ref{Pjkmtau3}). Coupled with \lemref{RSlem4}, (\ref{rhosigma2}) allow us to
conclude
\begin{equation}
\label{rhosigma3}
\begin{aligned}
&\rho_{j,m,k}(x) \ = \ \ell\big(h_{j+1}({\textstyle{-\frac{k_{j}}{m}}})\big)\, R_{j,m,k}\tau\big(h_{j+1}( - x)\big) \ =
\\
&\ \ \ = \ {\sum}_{r\in\Z/m\Z} \ e\left(\smallf{r\,k_j}{m}  \right) \int_\R \!e(m\,x\,t)\,
\sigma_{j,m,(k_1,\,\dots,\,k_j, r,k_{j+2},\,\dots,\,k_{n-1})}(-t)\, dt
\\
&\ \ \ = \ {\sum}_{r\in\Z/m\Z} \ e\left(\smallf{r\,k_j}{m}  \right) \,\widehat\sigma_{j,m,(k_1,\,\dots,\,k_j,
r,k_{j+2},\,\dots,\,k_{n-1})}\,(m\,x)\,.
\end{aligned}
\end{equation}
That is the second, equivalent statement about the connection between the $\rho_{j,m,k}$ and the Fourier transforms
of the $\,\sigma_{j,m,k}$.
\end{proof}

\section{Vanishing to infinite order}\label{sec:inforder}

Let $\,I \subset \R\,$ be an open interval, and $\,x_0\,$ a point in $\,I\,$. In \cite{inforder} we introduced the
notion of a distribution $\,\sigma \in C^{-\infty}(I)\,$ vanishing at $\,x_0\,$ to infinite order. When
$\,\sigma\,$ happens to be a $\,C^\infty\,$ function, this coincides with the usual notion of vanishing to infinite
order. We shall not repeat the details of the definition here; instead we summarize the features that are relevant
for this paper. First of all,
\begin{equation}
\label{inf}
\begin{aligned}
&\text{if $\,\sigma_1\,,\, \sigma_2 \in C^{-\infty}(I)\,$ both vanish to infinite order at $\,x_0\,$, and}
\\
&\qquad \text{if $\,\sigma_1\,$ and $\,\sigma_2\,$ agree on $\,I-\{x_0\}\,$, then $\,\sigma_1 = \sigma_2\,$ on all of
$\,I\,$.}
\end{aligned}
\end{equation}
Thus, if $\,\sigma_0 \in C^{-\infty}(I-\{x_0\})\,$ can be extended to a distribution $\,\sigma \in
C^{-\infty}(I)\,$ which vanishes to infinite order at $\,x_0\,$, that extension is uniquely determined; we then
call $\,\sigma\,$ the canonical extension of $\,\sigma_0\,$ across $\,x_0\,$. The terminology ``canonical
extension" can be justified:
\begin{equation}
\label{inf1}
\begin{aligned}
&\text{the property of vanishing to infinite order at $\,x_0\,$ is preserved by $\,C^\infty$}
\\
&\text{coordinate changes, by differentiation, and by multiplication with $C^\infty$}
\\
&\text{functions or with $|x-x_0|^\nu(\sgn(x-x_0))^\eta$,\,\ for any $(\nu,\eta)\in \C\times \Z/2\Z\,$};
\end{aligned}
\end{equation}
consequently these operations commute with the process of canonical extension. Everything that has been said also
applies to distributions defined on an open neighborhood $\,I\,$ of $\,\infty\,$ in $\,\R\mathbb P^1 \cong \R \cup
\{\infty\}$, via the coordinate change $\,x \rightsquigarrow 1/x\,$. The Fourier transform provides a connection
between vanishing to infinite order at the origin and canonical extension across infinity:
\begin{equation}
\label{inf2}
\begin{aligned}
&\text{$\,\sigma\in \mathcal S'(\R)$ vanishes to infinite order at the origin}
\\
&\qquad \Longrightarrow \qquad \text{$\,\widehat \sigma\,$ has a canonical extension across $\,\infty\,$}.
\end{aligned}
\end{equation}
In particular,
\begin{equation}
\label{inf3}
\begin{aligned}
&\text{any periodic distribution $\,\sigma = \textstyle\sum_{r\neq 0}\, a_r \, e(rx)\,$ with zero}
\\
&\qquad\text{constant term has a canonical extension across $\,\infty\,$},
\end{aligned}
\end{equation}
as follows from (\ref{inf2}) since $\,\widehat \sigma = {\sum}_{r \neq 0}\, a_{r}\, \d_r(x)\,$, with $\,\d_r(x)$
denoting the delta function at $\,r$, vanishes identically near the origin, and that is a much stronger condition
than vanishing to infinite order.

For the statements of the next proposition, we fix $k\in \Z^{n-1}$, $m\in \Z_{\neq 0}$, and $j$, $1\le j \le n-2$,
as before. We define
\begin{equation}
\label{mudef} \mu_j\ :\ \R^* \ \longrightarrow\ \C^*\,,\ \ \ \ \ \ \mu_j(x)\ = \
|x|^{\l_j-\l_{j+1}-1}\sgn(x)^{\d_j+\d_{j+1}}\,,
\end{equation}
and we again use the notational conventions of \lemref{RSlem3}\,: $\,m_j = \gcd(m,k_j)\,$ is the greatest common
divisor of $m$ and $k_j$, and $\,\overline {k_j}\in \Z/m\Z\,$ an integer such that $\,\overline{k_j} \,k_j \equiv
m_j \imod m\,$.

\begin{prop}
\label{fliprelationsprop} All the $\rho_{j,m,k}$ and $\sigma_{j,m,k}$ extend canonically to distributions on the
compactified real line $\,\R \cup \{\infty\}\,$,\,\ and vanish to infinite order at the origin. Both
$\,\sigma_{1,m,k}\,$ and $\,\rho_{n-2,m,k}\,$ even vanish to infinite order at every rational point. In terms of
the notational conventions just introduced,
\[
\begin{aligned}
&\sigma_{1,m,k}(x) \ = \ \mu_1(\smallf{m\,x}{m_2})\,{\sum}_{\ell\neq 0}\,\, c_{\ell,m_2,k_3,\ldots,k_{n-1}}\,\,
e\!\(\smallf{\ell m_2}{m}( \overline{k_2} -\smallf{m_2}{m\, x})\) ,
\\
&\rho_{n-2,m,k}(x) \ = \ \mu_{n-1}(\smallf{m\,x}{m_{n-2}})\,{\sum}_{\ell\neq 0}\,\,
c_{k_1,\ldots,k_{n-3},m_{n-2},\ell}\,\, e\!\(\!\smallf{\ell m_{n-2}}{m} (\smallf{m_{n-2}}{m\, x} -
\overline{k_{n-2}})\) ,
\end{aligned}
\]
and, for $2\le j \le n-2$\,,
\[
\begin{aligned}
&\sigma_{j,m,k}(x) \ = \ \mu_{j}(\smallf{m\,x}{m_{j+1}})\, \rho_{j-1, \widetilde m,\,
(k_1,\ldots,\,k_{j-2},\,\widetilde k_{j-1}\,,\,k_{j},\, m_{j+1},\,k_{j+2},
    \ldots,k_{n-1})}\!\(\smallf{m_{j+1}^2}{m^2\,x}\)
\\
&\qquad\qquad\qquad\qquad\qquad\text{with} \ \ \widetilde m = \smallf{m\,k_{j-1}}{m_{j+1}}\ \ \text{and}\ \ \widetilde
k_{j-1} = k_{j-1}\,\overline{k_{j+1}}\,.
\end{aligned}
\]
In all three cases these are identities of distributions on $\,\R \cup \{\infty\}\,$.
\end{prop}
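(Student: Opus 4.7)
The plan is to derive the three identities on $\R - \{0\}$ using Lemma \ref{RSlem3} together with the $SL(2,\R)$ principal series transformation law, and then to upgrade them to identities on the compactified line $\R \cup \{\infty\}$ via the canonical-extension/vanishing-to-infinite-order machinery of (\ref{inf})--(\ref{inf3}). For the two extremal formulas, the right-hand side becomes completely explicit once Lemma \ref{Rjk1taulem} is invoked.

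For the generic third identity ($2 \le j \le n-2$), I would start from the third assertion of Lemma \ref{RSlem3},
\[
\ell\bigl(\Phi_j(w)\bigr)\,\ell(h_j(\smallf{k_{j+1}}{m}))\,S_{j,m,k}\tau \ =\ \ell(h_j(-\smallf{\widetilde k_{j-1}}{\widetilde m}))\,R_{j-1,\widetilde m,\widetilde k}\tau,
\]
where $w = \bigl(\begin{smallmatrix}0 & -m_{j+1}/m \\ m/m_{j+1} & 0\end{smallmatrix}\bigr)$, and restrict both sides to $\Phi_j\bigl(\begin{smallmatrix}1 & x \\ 0 & 1\end{smallmatrix}\bigr)$. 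The core computation is in $SL(2,\R)$: for $x \ne 0$, the product $w \cdot \bigl(\begin{smallmatrix}1 & x \\ 0 & 1\end{smallmatrix}\bigr)$ admits the Bruhat-type factorization
\[
\left(\begin{smallmatrix}1 & -m_{j+1}^2/(m^2 x) \\ 0 & 1\end{smallmatrix}\right)\left(\begin{smallmatrix}m_{j+1}/(mx) & 0 \\ m/m_{j+1} & mx/m_{j+1}\end{smallmatrix}\right).
\]
Applying the transformation rule (\ref{Wmueta}) to the element of $W^{-\infty}_{\l_j-\l_{j+1},\d_j-\d_{j+1}}$ obtained by restriction via $\Phi_j$, the lower-triangular factor produces $|m_{j+1}/(mx)|^{1-(\l_j-\l_{j+1})}\sgn(m_{j+1}/(mx))^{\d_j-\d_{j+1}} = \mu_j(mx/m_{j+1})$ (using $\d_j - \d_{j+1} \equiv \d_j + \d_{j+1} \pmod 2$), while the upper unipotent factor yields evaluation of $\rho_{j-1,\widetilde m,\widetilde k}$ at $m_{j+1}^2/(m^2 x)$. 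The first and second identities are obtained identically from the first and second assertions of Lemma \ref{RSlem3}; the Fourier expansions of $\rho_{0,1,\cdot}$ and $\sigma_{n-1,1,\cdot}$ supplied by Lemma \ref{Rjk1taulem} are then substituted for the $R_{0,1,\cdot}$ and $S_{n-1,1,\cdot}$ that arise. In every case the identity so far holds only on $\R - \{0\}$.

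To promote the identities to $\R \cup \{\infty\}$, I would analyze the right-hand sides. For the extremal formulas the RHS is $\mu_j(cx)$ times a Fourier series in the variable $1/x$ with no constant term; after the coordinate change $x \rightsquigarrow 1/x$ it becomes $\mu_j(c/u)$ times a periodic distribution with vanishing zeroth Fourier coefficient, to which (\ref{inf3}) applies, producing a canonical extension across the image of $\infty$. Transported back via (\ref{inf1}) and multiplied by the admissible factor of $\mu_j$-type, this produces a canonical extension of the whole RHS across $x=0$ with vanishing to infinite order there; (\ref{inf2}) furnishes the canonical extension across $\infty$. For the generic identity I would induct on $j$, using Proposition \ref{sigmarhoftprop} to pass between $\sigma_{j,m,k}$ and $\rho_{j,m,k}$ via finite Fourier transforms (which preserve the relevant analytic properties) and stepping $j \to j+1$ through the third identity. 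Once both sides of each identity are known to vanish to infinite order at $0$ and extend canonically across $\infty$, the uniqueness assertion (\ref{inf}) promotes the equality on $\R - \{0\}$ to equality of distributions on $\R \cup \{\infty\}$.

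For the stronger claim that $\sigma_{1,m,k}$ and $\rho_{n-2,m,k}$ vanish to infinite order at every rational point, I would exploit the fact that the full collection of relations in Lemma \ref{RSlem} amounts to covariance of $S_{1,m,k}\tau$ (respectively $R_{n-2,m,k}\tau$) under the $\Phi_1(SL(2,\Z))$ (respectively $\Phi_{n-1}(SL(2,\Z))$) action. This action on $\R \cup \{\infty\}$ is by Möbius transformations and carries the orbit of $0$ precisely onto $\Q \cup \{\infty\}$; combining this covariance with the preservation property (\ref{inf1}) transports vanishing at $0$ to vanishing at every rational point. The main obstacle in the whole argument is the index bookkeeping: one must verify that the $SL(2)$ matrix factorizations produce exactly the renormalized parameters $\widetilde m = m\,k_{j-1}/m_{j+1}$, $\widetilde k_{j-1} = k_{j-1}\overline{k_{j+1}}$, the $\mu_j$-prefactor with the correct normalization, and the Gauss-sum phase $e(\smallf{\ell\,m_2\,\overline{k_2}}{m})$, while simultaneously handling the exceptional cases $j=1$ and $j=n-1$ in which the boundary indices $k_0$ and $k_n$ are formally set to zero.
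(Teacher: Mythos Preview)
Your derivation of the three identities on $\R-\{0\}$ via Lemma~\ref{RSlem3} and the $SL(2,\R)$ transformation law (\ref{sl2action}) is correct and is exactly what the paper does. The gap is in the second half of your argument, where you try to promote these to identities on $\R\cup\{\infty\}$ by analyzing only the right-hand sides.

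The issue is this. Each identity from Lemma~\ref{RSlem3} is already an equality of vectors in $W^{-\infty}_{\nu,\eta}$, hence globally on $\R\cup\{\infty\}$. When you write it out via (\ref{sl2action}) you see $\sigma_{j,m,k}$ on $\R-\{0\}$ expressed in terms of $\rho_{j-1,\widetilde m,\widetilde k}$ at a M\"obius-transformed point, and the point $x=0$ on the left corresponds to the point $\infty$ in the \emph{actual} $W^{-\infty}$ vector $R_{j-1,\widetilde m,\widetilde k}\tau\circ\Phi_j$, not merely to the canonical extension across $\infty$ of its open-cell restriction $\rho_{j-1,\widetilde m,\widetilde k}$. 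As the paper notes just after (\ref{sl2action}), the restriction $\sigma_0$ to $\R$ does not determine the global vector $\sigma$: there may be delta-type contributions at $\infty$. Your argument shows that the RHS, regarded as a distribution on $\R-\{0\}$, \emph{possesses} a canonical extension across $0$; it does not show that $\sigma_{j,m,k}$, which is already a specific distribution on $\R$, coincides with that canonical extension. The difference would be supported at $\{0\}$ and cannot be ruled out by your RHS analysis alone. The same obstruction blocks your inductive step: assuming $\rho_{j-1,\cdot}$ extends canonically across $\infty$ tells you what the canonical extension of $\sigma_{j,m,k}\big|_{\R-\{0\}}$ across $0$ is, but not that $\sigma_{j,m,k}$ itself agrees with it; and invoking (\ref{inf2}) for the extension across $\infty$ presupposes vanishing at the origin, which is what you have not yet established.

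What the paper actually does to close this gap is the content of the rest of \S\ref{sec:inforder}: it proves directly that the global vectors $R_{j,m,k}\tau$ vanish to infinite order along the Schubert cell $C_{j+1}$ (Lemmas~\ref{inforderlem3} and \ref{inforderlem4}), using the flag-variety geometry, the cuspidality of $\tau$, and a normal-degree-lowering argument via hypertangential vector fields (Proposition~\ref{hypertanprop}) in the spirit of Casselman--Hecht--Mili\v{c}i\'c. Once that is known, the $W^{-\infty}$ vectors \emph{are} the canonical extensions of their open-cell restrictions, and only then do the identities from Lemma~\ref{RSlem3} transfer to identities of canonical extensions on all of $\R\cup\{\infty\}$. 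Your plan omits this entire mechanism; without it (or an equivalent substitute) the argument cannot conclude.
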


The proof will occupy the remainder of this section. We begin with a remark on the action of $SL(2,\R)$ on
$W^{-\infty}_{\nu,\eta}$. If $a,b,c,d \in \R$ and $ad - bc =1$,
\begin{equation}
\label{sl2identity}
\begin{pmatrix} a \, & \, b \\ c \, & \, d \end{pmatrix} \begin{pmatrix} 1 \, & \, x \\ 0 \, & \, 1 \end{pmatrix} \
= \ \begin{pmatrix} 1 \, & \, \f{ax+b}{cx+d} \\ \, 0 & \, 1 \end{pmatrix} \begin{pmatrix} \f 1{cx+d} \, & \, 0 \\ c \,
& \, cx + d & \,\end{pmatrix}\,.
\end{equation}
Hence, for $\sigma \in W^{-\infty}_{\nu,\eta}$,
\begin{equation}
\label{sl2action} \left( \ell \begin{pmatrix} d \, & \, -b \\ -c \, & \, a \end{pmatrix}\sigma \right) \begin{pmatrix} 1 \, & \, x \\
0 \, & \, 1
\end{pmatrix}\ \ = \ \ |cx+d|^{\nu -1}\, (\sgn (cx+d))^\eta\ \sigma \! \begin{pmatrix} 1 \, & \, \f{ax+b}{cx+d} \\ \, 0 & \, 1 \end{pmatrix}\,.
\end{equation}
Although this appears to be an equality of distributions on $\R - \{-\f dc\}$, it can be given meaning on $\R \cup
\{\infty\}$\,: if $\sigma_0 \in C^{-\infty}(\R)$ is the restriction of some $\sigma \in W^{-\infty}_{\nu,\eta}$ to
the upper triangular unipotent subgroup of $SL(2,\R)$ -- identified with $\,\R\,$ in the usual manner -- then
$\,(\sgn x)^\eta |x|^{\nu- 1} \sigma(1/x)\,$ extends across the origin, and $\sigma_0$ along with this extension
completely determines $\sigma$. The extension is not unique since one can add any finite linear combination of
derivatives of the delta function; in other words, $\,\sigma_0\,$ does not determine $\,\sigma\,$ completely.
However, according to (\ref{inf1}), if $\,\sigma_0\,$ has a canonical extension across infinity when viewed as a
distribution in the usual sense, then $\,\sigma_0\,$ can also be canonically extended to a vector in $\,
W^{-\infty}_{\nu,\eta}\,$.

When the three identities in \lemref{RSlem3} are restricted to the images of, respectively, $\,\Phi_1\,$,
$\,\Phi_{n-1}\,$, $\,\Phi_j\,$, the equalities asserted by proposition \ref{fliprelationsprop} follow from
(\ref{sl2action}), but initially only as equalities of distributions on $\,\R-\{0\}\,$. To complete the proof of
the proposition, we shall show that the $\,R_{j,m,k}\tau\circ \Phi_{j+1}\,$ and {$\,{S_{j,m,k}\tau \circ
\Phi_j}\,$} vanish to infinite order at infinity: in that case, the renormalized quantities $\,\ell
(h_{j}(-\frac{k_{j-1}}{m}) )R_{j-1,m,k}\tau\circ \Phi_j\,$ and $\,\ell (h_{j} (\frac{k_{j+1}}{m})
)S_{j,m,k}\tau\circ \Phi_j\,$ also vanish to infinite order at $\infty$. Since \lemref{RSlem3} relates the behavior
of one near $\infty$ to the behavior of the other at the origin, both vanish to infinite order also at the origin.
We can then conclude that the $\,\rho_{j,k,m}\,$ and $\,\sigma_{j,m,k}\,$ have canonical extensions across
infinity, that they vanish to infinite order at the origin if $\,1 \le j \le n-2\,$,\,\ and that the equalities
asserted by the proposition are valid even at $\,x=0\,$ and $\,x=\,\infty\,$,\,\ as asserted. The second of the
three identities relates the behavior of $\,\rho_{n-2,m,k}\,$ near $\,\infty\,$ to that of the periodic
distribution
\begin{equation}
\label{Rjk1.1tau} \sigma_{n-1,1,(k_1,\ldots,k_{n-3},m_{n-2},k_{n-1})}(x)\ = \ {\sum}_{\ell\neq 0}\,
c_{k_1,\ldots,k_{n-3},m_{n-2},\ell}\,\, e(\ell x)
\end{equation}
near $\,x=-\f{\overline{k_{n-2}}}{m/m_{n-2}}\,$.\,\ Since $m$ and $k_{n-2}$ are multiples of $m_{n-2}$, we may
write them as $m=am_{n-2}$ and $k=bm_{n-2}$.  In this parametrization, $\,\overline{k_{n-2}}\,$ represents the
inverse $\,\bar{b}\,$ of $\,b\,$ modulo $\,a\,$, and the previous fraction is equal to $-{\bar{b}}/{a}$.  Thus,
when one considers all pairs of integers $\,m\neq 0\,$ and $\,k_{n-2}\,$ having greatest common divisor
$\,m_{n-2}\,$, one obtains every rational number as such a fraction. It follows that both the Fourier series
(\ref{Rjk1.1tau}) and the $\,\rho_{n-2,m,k}\,$ vanish to infinite order also at every rational point. The same
assertion about the $\,\sigma_{1,m,k}\,$ can be proved the same way, of course.

In short, to prove proposition \ref{fliprelationsprop} it suffices to show that all the $\,S_{j,m,k}\tau \circ
\Phi_j\,$ and $\,R_{j,m,k}\tau\circ \Phi_{j+1}\,$ vanish to infinite order at $\,\infty\,$. The outer automorphism
\begin{equation}
\label{Rjk1.2tau} g\ \ \mapsto \ \ w_{\text{long}}\,(g^{-1})^t\,w_{\text{long}}^{-1}\ ,\ \ \ \text{with}\ \ \
w_{\text{long}}\ = \ \left(\begin{smallmatrix} & & & & 1 \\ & & & \cdot & \\ & & \cdot & & \\ & \cdot &  & & \\ 1 & & &
&\end{smallmatrix}\right) \ ,
\end{equation}
of $\,G(\R)\,$ relates the automorphic distribution $\,\tau\,$ to its own contragredient $\,\widetilde \tau\,$, and
further relates the quantities $\,S_{j,m,k}\tau\,$ for $\,\tau\,$ to the $\,R_{j,m,k}\tau\,$ corresponding to
$\,\widetilde\tau\,$. We therefore only need to treat the case of the $\,R_{j,m,k}\tau\,$.

The verification of the vanishing to infinite order requires global arguments~-- we need to regard $\,\tau\,$ and
the various quantities related to it as sections of a line bundle on the (real) flag variety
\begin{equation}
\label{flag} X \ \ = \ \ G(\R)/B_-(\R)\,;
\end{equation}
here $\,B_-(\R)\,$ refers to the group of lower triangular matrices, as before. We let $\,\mathcal L_{\l,\d}
\rightarrow X\,$ denote the equivariant line bundle~-- i.e., line bundle on which $G(\R)$ acts, compatibly with its
action on $\,X\,$~-- on whose fiber at the identity coset $\,B_-(\R)\,$ operates via the character
\begin{equation}
\label{lb1} \(\begin{smallmatrix} a_1 & 0 & 0 \\ \star & \ddots & 0 \\ \star & \star & a_n \\ \end{smallmatrix}\)\ \
\mapsto \ \ {\prod}_{1\le j\le n}\(|a_j|^{\l_j + j - \f{n+1}{2}}\sgn(a_j)^{\d_j} \) \,.
\end{equation}
The representation space in which $\,\tau\,$ lies coincides with the space of distribution sections of $\,\mathcal
L_{\l,\d}\,$,
\begin{equation}
\label{lb2} V^{-\infty}_{\l,\d}\ \ = \ \ C^{-\infty}(X,\mathcal L_{\l,\d})\,,
\end{equation}
on which $\,G(\R)\,$ acts via left translation, as it does in the case of $\,V^{\infty}_{\l,\d}$; cf.
\cite{voronoi}. The analogous description applies to the representation space $\,W^{-\infty}_{\nu,\eta}\,$ of the
group $\,SL(2,\R)$, whose flag variety is equivariantly embedded in $\,X\,$ via each of the $\,\Phi_j\,$.

Let $\,\it o \in X\,$ denote the identity coset $\,e\,B_-(\R)$. The upper unipotent group $\,N(\R)\,$ acts freely
on the $\,N(\R)$-orbit through $\,\it o\,$, and
\begin{equation}
\label{flag2} N(\R) \ \ \cong \ \ X_0 \ \ =_{\text{def}}\ \ N(\R)\cdot \it o\ \ \subset \ \ X
\end{equation}
is the open Schubert cell. The isotropy subgroup $\,B_-(\R)\,$ at $\,\it o\,$ intersects $\,N(\R)\,$ only in the
identity, and that makes the line bundle $\,\mathcal L_{\l,\d}\,$ canonically trivial over the open Schubert
cell~-- another, equivalent way of identifying the restriction $\,\sigma|_{X_0}\,$ of any $\,\sigma \in
V^{-\infty}_{\l,\d}\,$ to the open Schubert cell with a scalar distribution. A simple computation in $\,SL(2,\R)\,$
shows that, as $\,t\to\infty\,$,\,\ the curve $\,h_j(t)\,\it o\,$ converges to
\begin{equation}
\label{flag3} {\lim}_{t\to\infty} \, h_j(t)\, \it o \ = \ s_j\,\it o\,,
\end{equation}
the translate of the base point $\,\it o\,$ by
\begin{equation}
\label{flag3.5} s_j \ \ = \ \ \Phi_j \( \begin{smallmatrix} \, 0 \, & \, 1 \, \\ \, -1 \, & \, 0 \, \end{smallmatrix}
\) \qquad (\, 1 \le j \le n-1\,)\,,
\end{equation}
which normalizes the diagonal subgroup of $G(\R)$ and represents the $j$-th simple Weyl reflection. The
$\,N(\R)$-orbit through $\,s_j\,\it o\,$,
\begin{equation}
\label{flag4} C_j\ \ =_{\text{def}} \ \ N(\R)\cdot s_j\, \it o \ \subset \ X
\end{equation}
has codimension one. In fact, the $\,C_j\,$, $\,1 \leq j \leq n-1\,$, are exactly the codimension one Schubert
cells.

The codimension one subgroup $\,N_j(\R) \subset N(\R)$, defined in (\ref{Nj}), acts freely at $\,s_j\,\it o\,$,\,\
hence
\begin{equation}
\label{flag5} N_j(\R) \ \ \cong \ \ C_j \ \ = \ \ N_j(\R)\cdot s_j\,\it o\,.
\end{equation}
As a subgroup of $\,N(\R)\,$,\,\ $\,N_j(\R)\,$ acts freely also at the base point $\,\it o\,$,\,\ and the resulting
orbit
\begin{equation}
\label{flag6} N_j(\R) \ \ \cong \ \ {\{x_j=0\}} \ \ =_{\text{def}}\ \ N_j(\R)\cdot \it o
\end{equation}
lies in the open Schubert cell $\,X_0\,$\,\ as a closed, codimension one submanifold. Since $\,s_j \in \Phi_j\big(
SL(2,\R)\big)$ normalizes $\,N_j(\R)$,\,\ left translation by $\,s_j\,$ relates the two orbits,
\begin{equation}
\label{flag5.5}
\begin{CD}
\ell(s_j) \ : \ \{x_j=0\} \ @>{\thicksim}>> \ C_j\,.
\end{CD}
\end{equation}
Note that $\,s_j^2\,$ lies in the diagonal subgroup of $G(\R)$, and thus fixes the point $\,\it o\,$.

The group $\,\Phi_j\big( SL(2,\R)\big)$ normalizes $\,N_j(\R)$,\,\ and $\,N_j(\R) \times \R \cong N(\R)\,$ via
$\,(n,t)\mapsto n\,h_j(t)\,$. Consequently the $\,N_j(\R)$-translates of
\begin{equation}
\label{flag7}
\begin{CD}
\Phi_j\big( SL(2,\R)\big)\cdot {\it} o \ \ \cong \ \ SL(2,\R)/\,\Phi_j^{-1}\!\big(B_-(\R)\big) \ \ \cong \ \ \R\mathbb
P^1
\end{CD}
\end{equation}
sweep out an $\,N_j(\R)$-equivariant fibration
\begin{equation}
\label{flag8}
\begin{CD}
X_0  @>{\subset}>> X_0 \cup C_j @<{\supset}<< \ \Phi_j\big( SL(2,\R)\big)\cdot {\it} o   \\
@VV{\R}V         @VV{\R\mathbb P^1}V   @VV{\R\mathbb P^1}V  \\
\{x_j=0\}\ @= \ \{x_j=0\} \ @<{\supset}<< \{{\it o}\} \ \ .
\end{CD}
\end{equation}
Note that $\,X_0 \cup C_j\,$ is open in $\,X\,$ since its complement, i.e., the union of all non-open Schubert
cells other than $\,C_j\,$, is closed.

We shall need to consider $\,P_{j,n-j}(\R)\,$,\,\ the standard upper parabolic subgroup of type $(j,n-j)$ -- in
other words, the parabolic subgroup of $\,GL(n,\R)\,$ generated by $\,N(\R)\,$ and the diagonally embedded copies
of $\,GL(j,\R)\,$, $\,GL(n-j,\R)\,$ placed, respectively, into the top left $\,j\times j\,$ and bottom right
$\,(n-j) \times (n-j)\,$ corners. Further notation: for any element $\,w\,$ of the normalizer of the diagonal
subgroup,
\begin{equation}
\label{schw}
C_w \ \ = \ \ N(\R)\cdot w\, {\it o}
\end{equation}
is the Schubert cell containing the point $\,w\,{\it o}\,$; it depends only on the coset of $\,w\,$ modulo the
diagonal subgroup. In particular, $\,C_{s_j} = C_j\,$ as previously defined, and $\,C_e = X_0\,$.

\begin{lem}
\label{inforderlem} The orbit $\,P_{j,n-j}(\R)\cdot s_j\,{\it o}\,$ is a locally closed algebraic subma\-nifold of
$\,X\,$, of codimension one. It contains $\,C_j\,$ as a Zariski open subset. Define, by downward induction,
\[
C_j^j \, = \, C_j\,,\ \ \ C_j^{i} \, = \, s_{i}\,C_j^{i+1} \cup C_j^{i+1}\ \ \ \ \text{for} \ \ 1 \leq i \leq j-1 \,.
\]
Then $\,C_j^i\,$ is a Zariski open, $\,N(\R)$-invariant subset of $\,P_{j,n-j}(\R)\cdot s_j\,{\it o}\subset X\,$,
which coincides with the union of the Schubert cells $\,C_{s_{j_1}s_{j_2}\dots s_{j_r}s_j}\,$, $\,i \leq j_1 <
\dots < j_r < j\,$, $\,0 \leq r \leq j - i +1\,$. It is also the smallest $\,N(\R)$-invariant subset of
$\,P_{j,n-j}(\R)\cdot s_j\,{\it o}\,$ containing $\,s_i\,C^{i+1}_j\,$.
\end{lem}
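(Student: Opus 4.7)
The plan is to deduce all parts of the lemma from the Bruhat decomposition of the flag variety together with one $SL(2)$-Bruhat computation in each factor $\Phi_i(SL(2,\R))$.

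For part (1) and the Zariski-openness in (2), I start from the general fact that the $P$-orbits on $G/B_-$ are in bijection with the $W_P$-cosets $W_P\backslash W$, via $W_P w \leftrightarrow P w\,{\it o}$. Consequently $P_{j,n-j}(\R)\cdot s_j\,{\it o}$ is a locally closed algebraic submanifold, equal to the disjoint union of Schubert cells $C_{vs_j}$ for $v\in W_P=S_j\times S_{n-j}$. Since $W_P$ is generated by the simple reflections $s_i$ with $i\neq j$, the word $vs_j$ is automatically reduced, so $\ell(vs_j)=\ell(v)+1\geq 2$ for every $v\in W_P\setminus\{e\}$, whereas $s_j$ itself has length $1$. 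Thus $s_j$ is the unique minimal-length element of its coset, $C_{s_j}=C_j$ is the unique Zariski-open cell in $Ps_j\,{\it o}$, and $\dim Ps_j\,{\it o}=\dim C_j=\binom n2-1$.

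The crux of parts (3)--(5) is the identity
\[
s_iC_w\cup C_w\ \ =\ \ C_w\cup C_{s_iw}\ \ =\ \ N\cdot s_iC_w\qquad\text{whenever }\ell(s_iw)>\ell(w).
\]
To prove it I factor $N=N^{(\alpha_i)}\rtimes U_{\alpha_i}$, where $N^{(\alpha_i)}$ is generated by the positive-root subgroups other than $U_{\alpha_i}$, and invoke the $SL(2)$-Bruhat identity
\[
\begin{pmatrix}0&1\\-1&0\end{pmatrix}\begin{pmatrix}1&t\\0&1\end{pmatrix}\ =\ \begin{pmatrix}1&-1/t\\0&1\end{pmatrix}\begin{pmatrix}-1/t&0\\-1&-t\end{pmatrix}\qquad(t\neq 0)
\]
inside $\Phi_i(SL(2,\R))$. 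The hypothesis $\ell(s_iw)>\ell(w)$ is equivalent to $w^{-1}\alpha_i>0$, under which the lower-triangular factor above fixes $w\,{\it o}$; hence $s_i\exp(tE_{i,i+1})w\,{\it o}$ lies in $C_w$ for $t\neq 0$ and equals $s_iw\,{\it o}\in C_{s_iw}$ for $t=0$. The same hypothesis makes $U_{\alpha_i}$ fix $s_iw\,{\it o}$, so $C_{s_iw}=N^{(\alpha_i)}\,s_iw\,{\it o}=N\cdot s_iw\,{\it o}$. Taking the $N$-saturation of the parametrized family yields both displayed equalities.

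The induction on (3)--(5) is then bookkeeping. Base case $i=j$: $C_j^j=C_{s_j}$ is the $r=0$ term. For the step, assume $C_j^{i+1}=\bigcup_\alpha C_{w_\alpha}$ with $w_\alpha=s_{j_1}\cdots s_{j_r}s_j$ and $i+1\leq j_1<\cdots<j_r<j$. A product of simple reflections in $S_n$ with strictly increasing indices is automatically reduced, so $s_iw_\alpha=s_is_{j_1}\cdots s_{j_r}s_j$ is reduced and the key identity applies to each cell. Taking unions,
\[
C_j^i\ \ =\ \ s_iC_j^{i+1}\cup C_j^{i+1}\ \ =\ \ C_j^{i+1}\cup{\bigcup}_\alpha C_{s_iw_\alpha}\,,
\]
precisely the set listed at level $i$. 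Each $s_iw_\alpha$ lies in $W_P s_j$, so $C_j^i\subseteq Ps_j\,{\it o}$; Schubert cells are $N$-invariant, so $C_j^i$ is $N$-invariant; Zariski-openness in $Ps_j\,{\it o}$ follows because the indexing set is Bruhat-downward closed inside $W_P s_j$ (a subword of $s_{j_1}\cdots s_{j_r}s_j$ that retains the final $s_j$ again has strictly increasing indices), so the complement is a finite union of Schubert varieties. Finally, $N\cdot s_iC_w=C_w\cup C_{s_iw}$ gives (5). The main obstacle is the key identity: tracking $s_i\exp(tE_{i,i+1})w\,{\it o}$ through the semidirect product $N=N^{(\alpha_i)}\rtimes U_{\alpha_i}$ and keeping careful account of when the lower-triangular factor of the $SL(2)$-Bruhat move fixes $w\,{\it o}$; everything else---the combinatorics of reduced expressions with strictly increasing indices in $S_n$, the Bruhat-downward closure, and the classification of $P$-orbits on $G/B_-$---is standard.
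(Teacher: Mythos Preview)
Your proof is correct and its core mechanism---the identity $s_iC_w\cup C_w=C_w\cup C_{s_iw}=N\cdot s_iC_w$ when $\ell(s_iw)>\ell(w)$, proved by the $SL(2)$-Bruhat factorization together with the decomposition $N=N^{(\alpha_i)}\rtimes U_{\alpha_i}$---is exactly the paper's identity (\ref{flag9}), established by the same reduction to $\R\mathbb P^1$ via $\Phi_i$. The inductive bookkeeping and the ``smallest $N$-invariant subset'' conclusion also match.

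There are two minor packaging differences worth noting. For the opening assertions, the paper exhibits the orbit $P_{j,n-j}(\R)\cdot s_j\,{\it o}$ concretely as lying in the vanishing locus of the single Pl\"ucker-type function $g\mapsto\langle f_1\wedge\cdots\wedge f_j,\,g^{-1}(e_1\wedge\cdots\wedge e_j)\rangle$, and reads off codimension one and the openness of $C_j$ from that; you instead invoke the general $W_P\backslash W$ parametrization of $P$-orbits and the minimality of $s_j$ in its coset. For Zariski-openness of $C_j^i$ inside the $P$-orbit, the paper observes directly that $C_j^i$ is a finite union of $P_{j,n-j}(\R)$-translates of the open cell $C_j$, hence open; you argue combinatorially that the indexing set is Bruhat-downward closed in $W_Ps_j$. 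Your arguments are valid, and the combinatorial route has the virtue of making the Schubert-cell description of $C_j^i$ completely explicit, but the paper's translate-of-an-open-set argument is shorter and avoids the subword analysis.
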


\begin{proof}
We let $\,\{e_1,e_2,\dots,e_n\}\,$ denote the standard basis of $\,\R^n\,$ and $\,\{f_1,f_2,\dots,f_n\}\,$ the dual
basis. The function
\begin{equation}
\label{cicj} G(\R) \ \ni \ g \ \mapsto \ \langle \, f_1 \wedge f_2 \wedge \dots \wedge f_j \, , \, g^{-1}( e_1 \wedge
e_2 \wedge \dots \wedge e_j )\, \rangle
\end{equation}
transforms according to a character under right translation by $\,B_-(\R)\,$, is left invariant under the action of
$\,N(\R)\,$, and does not vanish at the identity. Its vanishing locus, which we can and shall regard as a
subvariety of $\,X = G(\R)/B_-(\R)$, therefore consists of a union of lower dimensional $\,N(\R)$-orbits. The
action of $\,s_j\,$ on the standard basis interchanges $\,e_j\,$ and $\,e_{j+1}\,$,\,\ up to sign, but leaves the
other basis ele\-ments alone. The function (\ref{cicj}) therefore vanishes at $\,s_j\,$,\,\ which implies that the
vanishing locus contains the $\,N(\R)$-orbit of $\,s_j\,\it o\,$,\,\ i.e., the codimension one Schubert cell
$\,C_{s_j} = C_j\,$. Arguing similarly one sees that the vanishing locus does not contain any of the other
codimension one Schubert cells $\,C_i\,$, $\,i \neq j\,$, and must therefore coincide with the closure of
$\,C_j\,$.\,\  Since $\,P_{j,n-j}(\R)\,$ preserves the line spanned by $\,e_1 \wedge e_2 \wedge \dots \wedge
e_j\,$, the vanishing locus contains $\,P_{j,n-j}(\R)\cdot s_j\,{\it o}\,$. But $\,N(\R) \subset P_{j,n-j}(\R)\,$,
so $\,C_j = N(\R)\cdot s_j\,{\it o}\,$ is contained, and necessarily Zariski open, in $\,P_{j,n-j}(\R)\cdot
s_j\,{\it o}\,$. Since $\,P_{j,n-j}(\R)\,$ acts transitively on $\,P_{j,n-j}(\R)\cdot s_j\,{\it o}\,$, which
contains the locally Zariski closed submanifold $\,C_j\,$ as a Zariski open subset, the orbit must also be a
locally closed submanifold.

By construction, the $\,C^i_j\,$ are finite unions of translates of $\,C_j\,$ by elements of $\,P_{j,n-j}(\R)\,$,
and are therefore Zariski open in $\,P_{j,n-j}(\R)\cdot s_j\,{\it o}\,$. The $\,N(\R)$-invariance of $\,C^i_j\,$ is
not immediately obvious, but is a consequence, of course, of its description as a union of Schubert cells. This
description follows inductively from the following assertion: let $\,w\,$ be an element of the normalizer of the
diagonal subgroup, $\,1 \le i \le n-1\,$, and let $\,\fn\,$ denote the Lie algebra of $\,N(\R)\,$; then
\begin{equation}
\label{flag9}
w^{-1} E_{i,i+1}w \in \fn \ \ \ \ \Longrightarrow \ \ \ \ s_i \, C_w \, \cup C_w\ = \
C_w\,\cup\,C_{s_iw}\,.
\end{equation}
To see this, recall that every $\,n\in N(\R)\,$ can be expressed uniquely as $\,n = n_i h_i(t)\,$, with $\,n_i \in
N_i(\R)\,$, $\,t\in \R\,$. Since $\,s_i\,$ normalizes $\,N_i(\R)\,$,
\begin{equation}
\label{flag10}
\begin{aligned}
C_w\ &= \ N(\R)\cdot w \, {\it o}\ = \ N_i(\R)\cdot \{h_i(t)\, w \, {\it o}\, \mid \, t\in \R\}\,,
\\
s_i \,C_w\ &= \ s_i \, N(\R)\cdot w \,{\it o}\ = \ N_i(\R)\cdot \{ s_i \, h_i(t)\, w\, {\it o}\, \mid \, t\in \R\}\,, \
\ \ \text{and}
\\
C_{s_iw}\ &= \ N(\R)\cdot s_i\, w {\it o}\ = \ N_i(\R)\cdot \{h_i(t)\, s_i \, w\, {\it o}\, \mid \, t\in \R\}\,.
\end{aligned}
\end{equation}
The hypothesis of (\ref{flag9}) implies that the isotropy subgroup of $\,\Phi_i(SL(2,\R))\,$ at the point $\,w {\it
o}\,$ coincides with the $\,\Phi_i$-image of the lower triangular subgroup of $\,SL(2,\R)\,$. Thus (\ref{flag10})
reduces (\ref{flag9}) to the corresponding statement about the flag variety of $\,SL(2,\R)\,$~-- i.e., about
$\,\R\mathbb P^1\,$~-- which is essentially obvious. As was remarked before, (\ref{flag9}) implies the description
of $\,C^i_j\,$ as a union of Schubert cells, by downward induction on $\,i\,$. The final assertion of the lemma
also follows: with $\,w\,$ and $\,i\,$ as in (\ref{flag9}), $\,N(\R)\,s_i\,C_w\,$ contains $\,N_i(\R)\cdot
\{h_i(t_1) s_i h_i(t_2) w {\it o} \mid  t_1,t_2\in \R\}\,$, hence both $\,C_{s_iw}\,$ and $\,C_w\,$.
\end{proof}

The notions of vanishing to infinite order and canonical extension can be defined for distributions on manifolds;
the locus along which the vanishing or canonical extension takes place must be a locally closed submanifold
\cite{inforder}. When the submanifold has codimension one, as is the case for the $\,C_j\,$,\,\ this can be thought
of as the one variable case with parameters. Recall that the $\,R_{j,m,k}\tau\circ \Phi_{j+1}\,$ can be regarded as
a distribution section of a line bundle over the flag variety of $\,SL(2,\R)$, i.e., over the compactified real
line $\,\R\mathbb P^1 = \R \cup \{\infty\}\,$.

\begin{lem}
\label{inforderlem1} For $\,2 \le j \le n-1\,$, $\,m \neq 0\,$,\,\ and $\,k \in \Z^{n-1}$, the following are
equivalent:\newline \noindent {\rm a)}  The
$\,R_{j-1,m,(k_1,\dots,k_{j-2},\ell,k_{j},\dots,k_{n-1})}\tau\,\big|_{X_0}$, indexed by a set of representatives
$\,\ell\,$ modulo $\,m\,$, have canonical extensions across $\,C_j\,$;\newline \noindent {\rm b)}  The
$\,R_{j-1,m,(k_1,\dots,k_{j-2},\ell,k_{j},\dots,k_{n-1})}\tau\circ \Phi_{j}\big|_\R\,$, indexed by a set of
representatives $\,\ell\,$ modulo $\,m\,$, have canonical extensions across $\,\infty\,$;\newline \noindent {\rm
c)} $\,P_{j-1,m,k}\tau\,\big|_{X_0}$ has a canonical extension across $\,C_j\,$.\newline The conditions analogous
to {\rm a) - c)}, with ``vanish to infinite order" in place of ``have canonical extensions" are also equivalent to
each other. Moreover, {\rm a)} and {\rm b)} are equivalent even when $\,j=1\,$, as are the analogous conditions
involving vanishing to infinite order. In all cases, these equivalences preserve uniformity in $\,m\,$ and $\,k\,$,
in the sense of \cite[definition 7.1]{inforder}.
\end{lem}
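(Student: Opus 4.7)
The plan is to prove the equivalences (a) $\Leftrightarrow$ (b) and (a) $\Leftrightarrow$ (c) separately, by essentially different methods. The first is a purely geometric statement resting on the $N_j(\R)$-equivariance of $R_{j-1,m,k}\tau$ and remains valid even for $j = 1$. The second is Fourier-theoretic, rests on the circle action $\ell(h_{j-1}(t))$, $t \in \R/\Z$, and requires the distribution $P_{j-1,m,k}\tau$ to be defined, hence $j \ge 2$.

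For (a) $\Leftrightarrow$ (b): The fibration (\ref{flag8}) presents $X_0 \cup C_j$ as an $N_j(\R)$-equivariant $\R\mathbb P^1$-bundle over $\{x_j = 0\} \cong N_j(\R)$, with the fiber over the base point equal to the embedded $SL(2,\R)$-flag $\Phi_j(SL(2,\R))\cdot o \cong \R\mathbb P^1$. In this fiber, $C_j$ is represented by the single point $s_j\cdot o$ at infinity, while the upper unipotent orbit $\{h_j(t)\cdot o \mid t \in \R\}$ sits in $X_0$ as the $\R$-part of the fiber. Because $R_{j-1,m,k}\tau$ transforms under left translation by $N_j(\R)$ via the character $\chi^R_{j-1,m,k}$ of (\ref{Rjkmtau2}), its behavior along the codimension-one submanifold $C_j$ is determined fiberwise; via (\ref{sl2action}), canonical extension of $R_{j-1,m,k}\tau\big|_{X_0}$ across $C_j$ is equivalent to canonical extension of the pullback $R_{j-1,m,k}\tau \circ \Phi_j\big|_\R$ across $\infty \in \R\mathbb P^1$. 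Running over representatives $\ell$ modulo $m$ produces matching finite lists on each side, and the same argument applies verbatim with ``vanish to infinite order'' in place of ``canonical extension'', still for $j = 1$ as well.

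For (a) $\Leftrightarrow$ (c): The identity (\ref{Pjkmtau2}), with $j$ replaced by $j-1$, exhibits each $R_{j-1,m,(\dots,\ell,\dots)}\tau$ as a Fourier component of $P_{j-1,m,k}\tau$ under the circle action $\ell(h_{j-1}(t))$, $t \in \R/\Z$. Since $h_{j-1}(t) \in N(\R)$, this circle preserves both Schubert cells $X_0$ and $C_j$, so canonical extension across $C_j$ commutes with the Fourier decomposition. For (c) $\Rightarrow$ (a), decompose a canonical extension of $P_{j-1,m,k}\tau\big|_{X_0}$ under the circle to obtain canonical extensions of each Fourier component, in particular of the finitely many indexed by representatives modulo $m$. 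For (a) $\Rightarrow$ (c), the final identity of \lemref{RSlem} gives $\ell(h_j(1)) R_{j-1,m,k}\tau = R_{j-1,m,\widetilde k}\tau$ with $\widetilde k_{j-1} = k_{j-1} + m$, which propagates canonical extendability from representatives modulo $m$ to all $i \in \Z$; the strongly convergent sum (analogue of (\ref{Pjkmtau4})) $P_{j-1,m,k}\tau = \sum_{i\in\Z} R_{j-1,m,(\dots,i,\dots)}\tau$ then yields a canonical extension of $P_{j-1,m,k}\tau\big|_{X_0}$ as the sum of canonical extensions of the Fourier components. The same argument covers the ``vanish to infinite order'' variant.

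The main technical obstacle is verifying that canonical extension across a codimension-one submanifold (and the stronger property of vanishing to infinite order there) is compatible both with compact-group Fourier decomposition and with strongly convergent infinite sums. Compatibility with the circle action follows from the fact that elements of $N(\R)$ preserve $X_0$ and $C_j$, so the circle-equivariant uniqueness built into the canonical extension forces the Fourier components to inherit it; compatibility with strong summation uses that vanishing to infinite order is closed in the strong distribution topology. Uniformity in $m$ and $k$ (in the sense of \cite[definition 7.1]{inforder}) passes through each step, since restriction to submanifolds, pullback by $\Phi_j$, Fourier decomposition, and translation by elements of $N(\R)$ all depend continuously on the parameters.
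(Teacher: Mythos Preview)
Your proposal is correct and follows essentially the same approach as the paper. For (a) $\Leftrightarrow$ (b) both you and the paper use the $N_j(\R)$-equivariant product decomposition $R_{j-1,m,k}\tau\big|_{X_0\cup C_j} = \chi^R_{j-1,m,k} \times (R_{j-1,m,k}\tau\circ\Phi_j)$ coming from the fibration (\ref{flag8}); for (b) $\Leftrightarrow$ (c) both arguments rest on the Fourier relation (\ref{Pjkmtau8}) together with the $h_j(1)$-translation identity from \lemref{RSlem} that reduces all $\ell\in\Z$ to representatives modulo $m$. The paper packages the summation step by a direct appeal to \cite[Lemma 7.2]{inforder}, which asserts the equivalence between uniform canonical extendability of Fourier components and canonical extendability of their sum; your phrasing (``compatibility with strong summation uses that vanishing to infinite order is closed in the strong distribution topology'') gestures at this but should really cite that lemma, since the uniformity hypothesis in $\ell$ is what makes the infinite sum work.
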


\begin{proof}
The fibration (\ref{flag8}) is $\,N_j(\R)$-equivariant. We can therefore identify the total space $\,X_0 \cup
C_j\,$ with $\,N_{j}(\R)\times \R\mathbb P^1$. Since $\,N_j(\R)\,$ acts on $\,R_{j-1,m,k}\,$ according to the
character $\,\chi^R_{j-1,m,k}\,$, $\,R_{j-1,m,k}\,$ can be regarded as the product of its restriction to the fiber
with the character:
\begin{equation}
\label{Pjkmtau7}
R_{j-1,m,k}\tau \,\big|_{X_0\cup C_j} \ =  \ \chi^R_{j-1,m,k} \times R_{j-1,m,k}\tau \circ \Phi_{j}
\,.
\end{equation}
Thus b) implies a), both in the ``canonical extension" and the ``vanishing to infinite order" version. The converse
is almost equally obvious.  We can use the coordinates on $\,N_{j}(\R)\,$, along with $\,x_{j}\,$, to establish
vanishing to infinite order. The vector fields $\,\ell(E_{i_1,i_2})\,$, with $\,1\leq i_1 < i_2\leq n\,$,
$(i_1,i_2) \neq (j,j+1)\,$, generate the linear differential operators on $\,N_{j}(\R)\,$, and each of them acts on
$\,R_{j,m,k}\tau\,$ as multiplication by a constant. Hence, when an expression verifying a) is written in terms of
the $\,\ell(E_{i_1,i_2})\,$ and the partial derivative with respect to $\,x_{j+1}\,$, one can restrict it to the
fiber $\,\R\mathbb P^1$ in the product $\,N_{j}(\R)\times \R\mathbb P^1$ and conclude b), and this implication also
applies to both versions of a) and b). In view of \lemref{RSlem2}, increasing the index $\,\ell\,$ by $\,m\,$ has
the effect of translating $\,R_{j-1,m,(k_1,\dots,k_{j-2},\ell,k_{j},\dots,k_{n-1})}\tau\circ \Phi_{j}\,$ by
$\,h_j(1)\,$. Hence b) implies that the $\,R_{j-1,m,(k_1,\dots,k_{j-2},\ell,k_{j},\dots,k_{n-1})}\tau\circ
\Phi_{j}\,$, for all $\,\ell\in\Z\,$, have canonical extensions, or vanish to infinite order, uniformly in
$\,\ell\,$, in the sense of \cite[definition 7.1]{inforder}. An application of \cite[Lemma 7.2]{inforder} then
implies that the sum of the terms in (\ref{Pjkmtau7}), for all $\,\ell\in\Z\,$, has a canonical extension across
$\,C_j\,$, respectively vanishes to infinite order along $\,C_j\,$. But the sum is $\,P_{j-1,m,k}\tau\,$, since
\begin{equation}
\label{Pjkmtau8} P_{j-1,m,k}\tau \ = \ {\sum}_{\ell\in\Z} \
R_{j-1,m,(k_1,\dots,k_{j-2},\ell,k_{j},\,\dots,\,k_{n-1})}\tau\,,
\end{equation}
in complete analogy to (\ref{Pjkmtau4}). Thus b) implies c), again in both versions. This last argument can be
reversed because \cite[Lemma 7.2]{inforder} asserts an equivalence between the two relevant conditions. In these
arguments, bounds embodying the uniformity in $\,m\,$ and $\,k\,$ are carried along, so uniformity is preserved by
all of the equivalences.
\end{proof}

In practice, we shall prove that a particular distribution vanishes to infinite order along a locally closed
submanifold $\,C \subset X\,$ by first showing that its restriction to the complement of $\,C\,$ has a canonical
extension across $\,C\,$. The difference between the distribution and the canonical extension is a
distribution\begin{footnote}{Technically a distribution not on $\,X\,$, but rather on any open subset $\,U\subset
X\,$ which contains $\,C\,$ as a closed submanifold.}\end{footnote} supported on $\,C\,$. That distribution is then
shown to vanish by finding a contradiction between the support condition and other properties it is known to
possess.

To make this concrete, let us consider a non-open Schubert cell $\,C_w\,$, as defined in (\ref{schw}), attached to
an element $\,w \neq e\,$ of the normalizer of the diagonal subgroup. Since $\,C_w\,$ is locally closed in $\,X\,$,
we can choose a Zariski open subset $\,U_w \subset X\,$ which contains $\,C_w\,$ as a Zariski closed subset. We
suppose that a distribution section $\,\s\,$ of $\,\mathcal L_{\l,\d}\,$ is given, with support in $\,C_w\,$,
\begin{equation}
\label{schw1} \s \in C^{-\infty}(U_w,\mathcal L_{\l,\d})\,,\ \ \ \ \supp \s \subset C_w\,.
\end{equation}
A particular example would be a distribution section on $\,C_w\,$, not of the line bundle $\,\mathcal L_{\l,\d}\,$
itself, but of its tensor product with $\,\wedge^{\text{top}}\, T_{C_w}X\,$, the top exterior power of the normal
bundle of $\,C_w\,$ in $\,X\,$. Distributions with values in $\,\mathcal L_{\l,\d}\,$ are naturally dual to smooth
measures with values in the dual line bundle $\,\mathcal L_{-\l,\d}\,$; the shift by $\,\wedge^{\text{top}}\,
T_{C_w}X\,$ compensates for the discrepancy between the transformation under coordinate changes of smooth measures
on $\,U_w\,$ on the one hand, and smooth measures on $\,C_w\,$ on the other. We should remark that the line bundle
$\,\wedge^{\text{top}}\, T_{C_w}X\,$ on $\,C_w\,$ extends to a $\,G(\R)$-equivariant line bundle on $\,X\,$, so the
shift by $\,\wedge^{\text{top}}\, T_{C_w}X\,$ merely amounts to a shift of the parameters $\,\l\,$ and $\,\d\,$.

We shall call a distribution section $\,\s\,$ of the particular type just discussed a distribution section of
``normal degree zero". On general principle, one can express an $\,\mathcal L_{-\l,\d}$-valued distribution with
support on $\,C_w\,$ as a linear combination of normal derivatives, applied to a $\,\mathcal L_{-\l,\d}$-valued
distribution of normal degree zero, though in general such expressions can be given only locally. In our
applications $\,\s\,$ will sometimes be invariant under a subgroup that acts on $\,C_w\,$ with a compact
fundamental domain. In that case such an expression exists globally, and there is a well defined ``normal order" --
i.e., the maximum number of normal derivatives that are required. Otherwise the normal order may be well defined
only locally.

In the following, we let $\,\fn\,$, $\,\fn_-\,$, $\,\fb_-\,$, and $\,\fa\,$  denote the real Lie algebras of,
respectively, $\,N(\R)\,$, the lower triangular unipotent subgroup $\,N_-(\R)\,$, the lower triangular Borel
subgroup $\,B_-(\R)\,$, and the diagonal subgroup. The unipotent group $\,N(\R)\,$ is the pointwise product, in
either order, of $\,N(\R)\cap wN(\R)w^{-1}$ and $\,N(\R)\cap wN_-(\R)w^{-1}$; the latter fixes the point $\,w\,\it
o\,$, and the former acts freely at $\,w\,\it o\,$. Thus
\begin{equation}
\label{schw2} N(\R)\cap wN(\R)w^{-1} \ \cong \ C_w \ \ \ \text{via}\ \ \ \ N(\R)\ \ni n \ \mapsto \ n\, w\, {\it o}\,.
\end{equation}
Since $\,N(\R)\cap wN(\R)w^{-1}\,$ is closed in $\,wN(\R)w^{-1}\,$, we can let its orbit at the point $\,w \it o\,$
play the role of $\,U_w\,$:
\begin{equation}
\label{schw2.5} U_w \ = \ w\,X_0 \ = \ w\,N(\R)\cdot {\it o}\ = \ \text{$wN(\R)w^{-1}$-\,orbit at} \ w\,{\it o}\,.
\end{equation}
The tangent space to $\,U_w\,$ at $\,w\,\it o\,$ is naturally isomorphic to $\,w\fn w^{-1}$, and that of $\,C_w\,$
is naturally isomorphic to $\,\fn \cap w\fn w^{-1}$;\,\ cf. (\ref{schw2}). Conjugation by the group ${\,N(\R)\cap
wN(\R)w^{-1}}$ preserves both $\,w\fn w^{-1}$ and $\,\fn \cap w\fn w^{-1}$, so these are isomorphisms not just at
$\,w\,\it o\,$, but at any point $\,n\,w\,\it o\,$ with $\,n \in N(\R)\cap wN(\R)w^{-1}$. We conclude that
\begin{equation}
\label{schw3} w\fn w^{-1}/(\fn \cap w\fn w^{-1})\ \cong \ \text{normal space to}\ \,C_w\, \ \text{at} \ \,n\, w\, {\it
o}\,,
\end{equation}
for any $\,\,n \in N(\R)\cap wN(\R)w^{-1}$. Let $\,Z_1\,, \, Z_2 \,, \, \dots \,,\, Z_M\,$ be a basis of
$\,\fn_-\cap w \fn w^{-1}$, and let $\,\ell(Z_j)\,$ denote the vector field generated by $\,Z_j\,$ under
infinitesimal left translation. Since $\,\fn_-\cap w \fn w^{-1}$ is a linear complement to $\,\fn \cap w\fn w^{-1}$
in $\,w\fn w^{-1}$,
\begin{equation}
\label{schw4} \text{the $\,\ell(Z_j)\,$,\,\ $\,1 \leq j \leq M$,\,\ generate the normal space to}\ \,C_w\, \ \text{at each
point.}
\end{equation}
We conclude: an $\,\mathcal L_{\l,\d}$-valued distribution $\,\s\,$ on $\,U_w\,$ with support on $\,C_w\,$, as in
(\ref{schw1}), can be expressed locally as
\begin{equation}
\label{schw5} \s \ \ = \ \ {\sum}_L \ \ell(Z^L)\, \s_L \,;
\end{equation}
here $\,L\,$ runs over all $\,M$-tuples $(\ell_1, \ell_2, \dots , \ell_M)$ of nonnegative integers of total length
up to the normal degree of $\,\s$, $\,\ell(Z^L)\,$ is shorthand for the ordered product
$\,\ell(Z_1)^{\ell_1}\ell(Z_2)^{\ell_2}\dots \ell(Z_N)^{\ell_N}\!$,\,\ and the $\,\s_L\,$ are $\,\mathcal
L_{\l,\d}$-valued distributions supported on $\,C_w\,$, of normal degree zero.

The functions $\,f\in C^\infty(U_w)\,$ that vanish on $\,C_w\,$ constitute an ideal $\,I_w\,$. One calls a vector
field -- always understood to have $\,C^\infty$ coefficients -- tangential to $\,C_w\,$ if the one parameter group
of diffeomorphisms it generates preserves $\,C_w\,$, or entirely equivalently,
\begin{equation}
\label{schw5.5} \text{a vector field $\,V\,$ is tangential to $\,C_w\,$ if $\,\,V I_w \subset I_w\,$}.
\end{equation}
As before, we suppose that $\,\sigma \in C^{-\infty}(U_w,\mathcal L_{\l,\d})\,$ has support in $\,C_w\,$. Then
\begin{equation}
\label{schw6}
\begin{aligned}
\!\!\text{multiplication by any $f\in I_w$ reduces the normal degree of $\sigma$ by one, and}
\\
\!\!\text{multiplication by any $\,f\in C^\infty(U_w)\,$ does not increase the normal degree.}
\end{aligned}
\end{equation}
In particular, if $\,\sigma\,$ has normal degree zero and $\,f \in I_w\,$, $\,f\,\sigma\,$ vanishes. This follows
easily from the definitions.

By infinitesimal left translation, any element $\,Z\,$ of the Lie algebra of $\,G(\R)\,$ acts on sections of the
$\,G(\R)$-equivariant line bundle $\,\mathcal L_{\l,\d}$. Arbitrary vector fields, on the other hand, do not
obviously act on sections of $\,\mathcal L_{\l,\d}$. Like any line bundle, $\,\mathcal L_{\l,\d}$ can be locally
trivialized, and any two local trivializations are related by multiplication with a $\,C^\infty\,$ function without
zeroes. In view of (\ref{schw6}), multiplication by such a function does not affect the normal degree. Therefore,
without loss of generality, we may as well suppose that $\,\sigma\,$ is a scalar distribution. We note:
\begin{equation}
\label{schw7}
\begin{aligned}
&\text{if the vector field $\,V\,$ is tangential to $\,C_w\,$, the normal}
\\
&\text{degree of $\,V\sigma\,$ does not exceed the normal degree of $\,\sigma\,$.}
\end{aligned}
\end{equation}
Like (\ref{schw6}) this follows directly from the definitions. The Schubert cell $\,C_w\,$ is not only an
$\,N(\R)$-orbit, but is also invariant under the diagonal subgroup, which fixes the point $\,w\,{\it o}\,$ and
normalizes $\,N(\R)$. Thus
\begin{equation}
\label{schw8} \text{the vector field $\ell(Z)$, for any $\,Z\in \fa \oplus \fn\,$, is tangential to $\,C_w\,$,}
\end{equation}
since the one parameter group generated by $\,\ell(Z)$ preserves $\,C_w\,$. For lack of a better term, we shall
call a vector field $\,V\,$ {\it hypertangential\/} to $\,C_w\,$ if
\begin{equation}
\label{hypertan} V I_w^k \ \subset \ I_w^{k+1}\ \ \ \text{for all $\,k \geq 0\,$};
\end{equation}
this is not standard terminology, however.

\begin{lem}
\label{hypertanlem} A vector field $\,V\,$ is hypertangential to $\,C_w\,$ if and only if both $\,V\,$ and its
commutator $\,[V,W]\,$ with any other vector field $\,W\,$ are tangential to $\,C_w\,$. If the vector field $\,V$
is hypertangential to $\,C_w\,$, $\,V\sigma\,$ has strictly lower normal degree than $\,\sigma\,$;\,\ in
particular, when $\,\sigma\,$ has normal degree zero, then $\,V \sigma\,$ must vanish.
\end{lem}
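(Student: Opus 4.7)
The plan is to work in a local chart $(x_1,\ldots,x_p,y_1,\ldots,y_q)$ about a point of $C_w$ in which $C_w = \{y_1=\cdots=y_q=0\}$ and $I_w$ is the ideal generated by $y_1,\ldots,y_q$. Writing $V = \sum_i a_i\partial_{x_i} + \sum_j b_j\partial_{y_j}$, the Leibniz rule shows that $V$ is hypertangential if and only if $V\cdot C^\infty \subset I_w$ and $V\cdot I_w \subset I_w^2$, which in coordinates means $a_i \in I_w$ for all $i$ and $b_j \in I_w^2$ for all $j$. The lemma then reduces to translating the conditions on $V$ and on its commutators into such vanishing conditions on the coefficients.

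For the equivalence, the forward direction is formal. Tangentiality of $V$ is immediate from $V I_w \subset I_w^2 \subset I_w$. For $[V,W]$ tangential and $f \in I_w$, use $[V,W]f = V(Wf) - W(Vf)$: the first term lies in $I_w$ because $V \cdot C^\infty \subset I_w$, and the second lies in $I_w$ because $Vf \in I_w^2$ and any vector field sends $I_w^2$ into $I_w$ by the Leibniz rule. The converse uses only a handful of test fields. Tangentiality gives $V y_j = b_j \in I_w$; applying $[V,\partial_{y_k}]$ to $y_\ell$ yields $\partial_{y_k} b_\ell \in I_w$, and writing $b_\ell = \sum_m c_{\ell m} y_m$ then restricting to $y=0$ forces $c_{\ell k} \in I_w$, so $b_\ell \in I_w^2$; finally, applying $[V,x_i\partial_{y_k}]$ to $y_k$ and using the just-proven $b_\ell \in I_w^2$ isolates $a_i \in I_w$.

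For the normal-degree statement, I will induct on $N$. In the base case $N=0$, locally $\sigma = h \cdot \delta_{C_w}$ and each summand of $V\sigma$ vanishes: $a_i(\partial_{x_i}h)\delta_{C_w}$ vanishes because $a_i$ restricts to zero on $C_w$, while $b_j \partial_{y_j}\sigma$ vanishes because $b_j \in I_w^2$ annihilates $\partial_{y_j}\delta_{C_w}$ via the identity $y_\ell\partial_{y_j}\delta_{C_w} = -\delta_{\ell j}\delta_{C_w}$ followed by $y_m\delta_{C_w} = 0$. For the inductive step, locally decompose $\sigma$ using (\ref{schw5}) as $\sum_i \ell(Z_i)\sigma'_i + \rho$ with each $\sigma'_i$ and $\rho$ of normal degree $\leq N-1$, and write $V\ell(Z_i)\sigma'_i = \ell(Z_i) V\sigma'_i + [V,\ell(Z_i)]\sigma'_i$. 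The inductive hypothesis gives $V\sigma'_i$ and $V\rho$ normal degree $\leq N-2$; since any vector field raises normal degree by at most one, $\ell(Z_i)V\sigma'_i$ has normal degree $\leq N-1$. Meanwhile, the equivalence just proved makes $[V,\ell(Z_i)]$ tangential, so by (\ref{schw7}) the commutator term also has normal degree $\leq N-1$. The main obstacle is the base-case bookkeeping: the identity $y_\ell \partial_{y_j}\delta_{C_w} = -\delta_{\ell j}\delta_{C_w}$ and its higher-order analogues are precisely what make the vanishing orders $a_i \in I_w$ and $b_j \in I_w^2$ in the definition of hypertangential correspond to a drop of exactly one in normal degree.
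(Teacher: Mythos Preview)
Your proof is correct. The equivalence argument is essentially the paper's: both reduce to the coordinate characterization $a_i\in I_w$, $b_j\in I_w^2$ and test against $\partial_{y_\ell}$ and $x_k\partial_{y_\ell}$ (you spell out the forward direction more explicitly, but the content is the same).

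For the normal-degree assertion, however, you take a genuinely different route. The paper argues in one stroke: it rewrites
\[
V \;=\; \sum_i\bigl(\partial_{x_i}\circ a_i - \tfrac{\partial a_i}{\partial x_i}\bigr) \;+\; \sum_j\bigl(\partial_{y_j}\circ b_j - \tfrac{\partial b_j}{\partial y_j}\bigr),
\]
observes that $\partial a_i/\partial x_i,\,\partial b_j/\partial y_j\in I_w$, and concludes that $V=\sum_\ell W_\ell\circ f_\ell + g$ with $f_\ell,g\in I_w$; then (\ref{schw6}) immediately gives the drop in normal degree (using $b_j\in I_w^2$ for the $\partial_{y_j}$ terms). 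Your approach instead inducts on the normal degree, peeling off one normal derivative via the decomposition $\sigma=\sum_i\ell(Z_i)\sigma'_i+\rho$ and invoking the commutator characterization to control $[V,\ell(Z_i)]$. The paper's rewriting is shorter and avoids induction; your argument has the virtue of making the role of the commutator condition in the second statement transparent, since it is precisely what bounds the $[V,\ell(Z_i)]\sigma'_i$ term.
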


\begin{proof}
We choose local coordinates $\,x_1, x_2, \dots , x_m, y_1, \dots , y_n\,$ on $\,X\,$ so that $\,C_w\,$ is the set
of common zeroes of the $\,y_j\,$, $\,1 \leq j \leq n\,$. When we express $\,V\,$ as
\begin{equation}
\label{hypertan1} V \ \ = \ \ {\sum}_{1\leq i \leq m}\, a_i\, {\textstyle{\frac {\partial \ }{\partial x_i}}} \ + \
{\sum}_{1\leq j \leq n}\, b_j\, {\textstyle{\frac {\partial \ }{\partial y_j}}} \,,
\end{equation}
tangentiality is characterized by the condition $\,b_j \in I_w\,$ for all $\,j\,$, whereas hypertangentiality
translates into the conditions $\,a_i \in I_w\,$, $\,b_j \in I^2_w\,$, for all indices $\,i\,$ and $\,j\,$. Using
these characterizations, one obtains the alternative description of hypertangentiality by computing the commutators
of $\,V\,$ with the $\,{\frac {\partial \ }{\partial y_\ell}}\,$ and the $\,x_k{\frac {\partial \ }{\partial
y_\ell}}\,$. Since
\begin{equation}
\label{hypertan2} V \ \ = \ \ {\sum}_{1\leq i \leq m}\, \big( {\textstyle{\frac {\partial \ }{\partial x_i}}} \circ a_i
- {\textstyle{\frac {\partial a_i}{\partial x_i}}}\big) \ + \ {\sum}_{1\leq j \leq n}\, \big( {\textstyle{\frac
{\partial \ }{\partial y_j}}} \circ b_j - {\textstyle{\frac {\partial b_j}{\partial y_j}}}\big)\,,
\end{equation}
and since $\,\frac {\partial \ }{\partial x_i} I_w \subset I_w\,$,\,\ a hypertangential vector field $\,V\,$ can be
expressed as a linear combination $\,\sum_\ell W_\ell \circ f_\ell + g\,$ with $\,f_\ell, g \in I_w\,$.  At this
point (\ref{schw6}) implies the second assertion of the lemma.
\end{proof}

We had remarked earlier that any $\,n\in wN(\R)w^{-1}\,$ can be expressed uniquely as a product $\,n = n_1\,n_2\,$,
with $\,n_1 \in wN(\R)w^{-1}\cap N(\R)\,$, $\,n_2 \in wN(\R)w^{-1}\cap N_-(\R)\,$. Thus, for any $\,Z\,$ in the Lie
algebra of $\,G(\R)$, we can define a vector field $\,m(Z)\,$ on $\,U_w = w N(\R)\cdot {\it o}\,$ by the formula
\begin{equation}
\label{mvf}
\begin{aligned}
\big( m(Z)f \big)(n_1\,n_2\,w\,{\it o}) \ = \ {\textstyle{\frac{\partial \ }{\partial t}}}\, f(n_1
\exp(-t\,Z)\,n_2\,w\,{\it o}) \big|_{t=0}\ \ \text{if $\,f \in C^\infty(U_w)\,$}
\\
\text{and $\,n_1 \in wN(\R)w^{-1}\cap N(\R)\,$, $\,n_2 \in wN(\R)w^{-1}\cap N_-(\R)\,$}.
\end{aligned}
\end{equation}
By construction,
\begin{equation}
\label{mvf1} \text{the vector fields $\,m(Z)\,$ are $\,\big(wN(\R)w^{-1}\cap N(\R)\big)$-invariant.}
\end{equation}
Since $\,\ell(Z)f(n_1 n_2 w {\it o})\,$ is the derivative at $\,t=0\,$ of $\,f(\exp(-t Z)n_1 n_2 w {\it o})\,$, we
can describe the value $\,m(Z)|_{n_1 n_2 w {\it o}}\,$ of $\,m(Z)\,$ at the point $\,n_1 n_2 w {\it o}\,$ as
follows:
\begin{equation}
\label{mvf2} m(Z)\big|_{n_1 n_2 w {\it o}} \ = \ \ell(\widetilde Z)\big|_{n_1 n_2 w {\it o}} \ \ \ \text{with} \ \
\widetilde Z = \Ad(n_1) Z \,;
\end{equation}
here, as before, $\,n_1 \in wN(\R)w^{-1}\cap N(\R)\,$, $\,n_2 \in wN(\R)w^{-1}\cap N_-(\R)\,$.

\begin{lem}
\label{hypertanlem2} {\rm {\bf a)}}\, \ $m(Z)\,$ is tangential to $\,C_w\,$ if $\,Z \in \fa \oplus \fn\,$ or $\,Z
\in w\,\fn_-\,w^{-1}\,$.\newline \noindent {\rm {\bf b)}}\, \ $Z_1, Z_2 \in w\,\fn \, w^{-1} \cap \fn_- \ \ \
\Longrightarrow \ \ \ [m(Z_1),m(Z_2)]\, = \, m([Z_1,Z_2])\,$.\newline \noindent {\rm {\bf c)}}\, \ $Z_1, Z_2 \in
w\,\fn \, w^{-1} \cap \fn \ \ \ \Longrightarrow \ \ \ [m(Z_1),m(Z_2)]\, = \, -\,m([Z_1,Z_2])\,$.\newline \noindent
{\rm {\bf d)}}\, \ $Z_1 \in w\,\fn \, w^{-1} \cap \fn_- \,,\ \ Z_2 \in w\,\fn \, w^{-1} \cap \fn \ \ \
\Longrightarrow \ \ \ [m(Z_1),m(Z_2)]\, = \, 0\,$.\newline \noindent {\rm {\bf e)}}\, \ $Z_1 \in w\,\fn_- \, w^{-1}
\cap \fn \,,\ \ Z_2 \in w\,\fn \, w^{-1} \cap \fn_- \ \ \ \ \Longrightarrow \ \ \ \ $the vector field\newline
$\left. \ \ \ \ \,\, [m(Z_1),m(Z_2)]- m([Z_1,Z_2])\right.$\,\ is tangential to $\,C_w\,$.\newline \noindent {\rm
{\bf f)}}\, \ $Z_1 \in w\,\fn_- \, w^{-1} \cap \fn \,,\ \ [\,Z_1 \, , \,w\,\fn w^{-1} \cap \fn_-\,] \ \subset \ \fa
+ \fn + w\,\fn_-\,w^{-1} \ \ \ \ \Longrightarrow$ \newline $\left. \ \ \ \ \,\, m(Z_1)\right.$\,\ is
hypertangential to $\,C_w\,$.
\end{lem}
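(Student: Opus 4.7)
The central tool throughout is the identity $m(Z)|_{n_1 n_2 wo} = \ell(\Ad(n_1)Z)|_{n_1 n_2 wo}$ of (\ref{mvf2}), combined with the unique factorization $wN(\R)w^{-1} = (wN(\R)w^{-1}\cap N(\R))\cdot (wN(\R)w^{-1}\cap N_-(\R))$. Part (a) is immediate: when $Z \in \fa \oplus \fn$, $\Ad(n_1)Z$ remains in $\fa + \fn$ because $N(\R)$ normalizes this Borel subalgebra, and $\ell(\fa + \fn)$ is tangential to $C_w$ by (\ref{schw8}); when $Z \in w\fn_- w^{-1}$, the curve $n_1\exp(-tZ)\,wo = n_1\,w\exp(-t\Ad(w^{-1})Z)\,o$ is constant in $t$ since $\Ad(w^{-1})Z \in \fn_- \subset \fb_-$ fixes $o$, so $m(Z)$ vanishes identically along $C_w$ and is trivially tangential.

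Parts (b)--(d) come from direct Baker--Campbell--Hausdorff computations. When $Z \in w\fn w^{-1}\cap \fn_-$, the factor $\exp(-tZ) \in wN(\R)w^{-1}\cap N_-(\R)$ is absorbed into $n_2$ from the left ($n_2 \mapsto \exp(-tZ)n_2$); when $Z \in w\fn w^{-1}\cap \fn$, $\exp(-tZ)$ is absorbed into $n_1$ from the right ($n_1 \mapsto n_1\exp(-tZ)$). One then expresses $m(Z_i)m(Z_j)f(p)$ as a mixed second partial derivative of $f(n_1\exp(\cdot)\exp(\cdot)n_2\,wo)$, swaps the two inner exponentials using the BCH identity $\exp(A)\exp(B) \equiv \exp([A,B])\exp(B)\exp(A)$ modulo higher-order terms, and reads off the coefficient of $st$ to obtain $\pm\,m([Z_1,Z_2])$. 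The sign flip between (b) and (c) reflects the mirror-reversed ordering of the two exponentials (left- versus right-absorption); in (d) the two $Z_i$'s act on different factors and the two orderings coincide, yielding zero.

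Part (e) is the main obstacle, because $Z_1 \in w\fn_- w^{-1}\cap \fn$ lies outside $w\fn w^{-1}$ and its exponential is absorbed into neither factor. The plan is to use the rewriting $n_1\exp(-tZ_1)n_2 = n_1 n_2 \exp(-t\Ad(n_2^{-1})Z_1)$, expand $\Ad(n_2^{-1})Z_1 = Z_1 + [Z_1,Y] + O(Y^2)$ with $Y = \log n_2 \in w\fn w^{-1}\cap \fn_-$, and decompose each resulting term along the direct sum $\fg = (w\fn w^{-1}\cap \fn)\oplus(w\fn w^{-1}\cap \fn_-)\oplus w\fb_- w^{-1}$. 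Since only the components outside $w\fb_- w^{-1}$ move the base point $o$ on the flag variety, careful bookkeeping of the $O(Y)$ contributions extracts precisely $m([Z_1,Z_2])$ plus correction terms whose coefficients vanish at $Y=0$, i.e., along $C_w$; the latter are by definition tangential.

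Part (f) reduces, via the criterion of \lemref{hypertanlem}, to showing both $m(Z_1)$ and every $[m(Z_1),W]$ are tangential. Tangentiality of $m(Z_1)$ is (a). For the commutators, any vector field on $U_w$ has the form $W=\sum_\alpha f_\alpha\, m(Y_\alpha)$ with $Y_\alpha$ in a basis of $w\fn w^{-1}$; since $m(Z_1)$ vanishes along $C_w$ one has $m(Z_1)f_\alpha \in I_w$, and the Leibniz expansion of $[m(Z_1),f_\alpha m(Y_\alpha)]$ shows that tangentiality reduces to that of each $[m(Z_1),m(Y)]$. The case $Y \in w\fn w^{-1}\cap\fn$ is handled by a direct computation analogous to (b)--(d); the case $Y \in w\fn w^{-1}\cap\fn_-$ is (e), giving $[m(Z_1),m(Y)] = m([Z_1,Y])+(\text{tangential})$, and the hypothesis $[Z_1,Y] \in \fa + \fn + w\fn_- w^{-1}$ combined with (a) makes $m([Z_1,Y])$ tangential.
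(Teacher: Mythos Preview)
Your treatment of parts (a)--(d) and (f) is essentially the same as the paper's. One small imprecision in (f): for $Y\in w\fn w^{-1}\cap\fn$ you say the bracket $[m(Z_1),m(Y)]$ is ``handled by a direct computation analogous to (b)--(d)'', but (b)--(d) all require both entries to lie in $w\fn w^{-1}$, which $Z_1$ does not. The paper instead observes that both $m(Z_1)$ and $m(Y)$ are tangential to $C_w$ by (a), and the bracket of tangential vector fields is tangential; that is all that is needed there.

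The real divergence is in part (e). Your plan --- pass $\exp(-tZ_1)$ to the right of $n_2$, expand $\Ad(n_2^{-1})Z_1$ in $Y=\log n_2$, and project onto $w\fn w^{-1}$ --- is a reasonable heuristic, but the phrase ``careful bookkeeping \dots\ extracts precisely $m([Z_1,Z_2])$ plus correction terms'' is where the actual argument would have to live, and you have not supplied it. The difficulty is that once you write $m(Z_1)$ this way, applying $m(Z_2)$ to the result requires refactoring $n_1 n_2\exp(\cdots)$ back into the $(n_1',n_2')$ form, and tracking the $O(Y)$ contributions through that refactoring is not straightforward. The paper sidesteps this entirely: it expands $m(Z_1)=\sum_i a_i\,m(Y_i^+)+\sum_j b_j\,m(Y_j^-)$ in the frame of $m$-vector fields coming from a basis of $w\fn w^{-1}$, notes that both $[m(Z_1),m(Z_2)]$ and $m([Z_1,Z_2])$ are invariant under $wN(\R)w^{-1}\cap N(\R)$ (which acts transitively on $C_w$), and therefore checks that they agree at the \emph{single point} $w\,{\it o}$. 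At that point $n_2=e$, so $a_i(w\,{\it o})=b_j(w\,{\it o})=0$, and a short computation using $\exp(-tZ_1)\,w\,{\it o}=w\,{\it o}$ finishes the job. This invariance-plus-basepoint reduction is the key idea you are missing; it replaces your ``careful bookkeeping'' with a two-line evaluation.
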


\begin{proof}
The one parameter group $\,\exp(-tZ)\,$, with $\,Z \in w\,\fn_-\,w^{-1}\,$, fixes the point $\,w\,{\it o}\,$. We
conclude that the value $\,m(Z)|_{n_1 w {\it o}}\,$ of the vector field $\,m(Z)\,$ at any point $\,n_1w {\it o}\in
C_w\,$ --\,\ i.e., when $\,n_2\! =\! e\,$ --\,\ vanishes. That is an even stronger condition than tangentiality.
When $\,Z \in \fa \oplus \fn\,$, on the other hand, the values $\,m(Z)|_{n_1 w {\it o}}\,$ along $\,C_w\,$ are
tangential to $\,C_w\,$ but possibly non-zero; in that case, too, $\,m(Z)\,$ is tangential to $\,C_w\,$. That
implies a). For $\,Z_1 \in w\,\fn \, w^{-1} \cap \fn_-\,$ and $\,n_2 \in wN(\R)w^{-1}\cap N_-(\R)\,$, the product
$\,\exp(-tZ_1)n_2\,$ also lies in $\,wN(\R)w^{-1}\cap N_-(\R)\,$. Hence, for any $\,Z_2\,$ in the Lie algebra of
$\,G(\R)$, any $\,n_1 \in wN(\R)w^{-1}\cap N(\R)\,$ and any $\,f\in C^\infty(U_w)$,
\begin{equation}
\label{mvf3}
\begin{aligned}
\big( m(Z_1) m(Z_2)f \big)(n_1\,n_2\,w\,{\it o}) \ = \ {\textstyle{\frac{\partial \ }{\partial t}}}\,\big(m(Z_2) f
\big)(n_1 \exp(-t\,Z_1)\,n_2\,w\,{\it o}) \big|_{t=0}
\\
=\ \ {\textstyle{\frac{\partial^2 \, \ }{\partial t \partial s}}}\,f(n_1 \exp(-s Z_2)\exp(-t\,Z_1)\,n_2\,w\,{\it o})
\big|_{s=t=0}\,.
\end{aligned}
\end{equation}
Similarly, with $\,Z_1 \in w\,\fn \, w^{-1} \cap \fn\,$ and $\,n_1 \in wN(\R)w^{-1}\cap N(\R)\,$,
$\,n_1\exp(-tZ_1)\,$ lies in $\,wN(\R)w^{-1}\cap N(\R)\,$. Hence, for any $\,Z_2\,$ in the Lie algebra of
$\,G(\R)$, any $\,n_2 \in wN(\R)w^{-1}\cap N_-(\R)\,$ and any $\,f\in C^\infty(U_w)$,
\begin{equation}
\label{mvf4}
\begin{aligned}
\big( m(Z_1) m(Z_2)f \big)(n_1\,n_2\,w\,{\it o}) \ = \ {\textstyle{\frac{\partial \ }{\partial t}}}\,\big(m(Z_2) f
\big)(n_1 \exp(-t\,Z_1)\,n_2\,w\,{\it o}) \big|_{t=0}
\\
=\ \ {\textstyle{\frac{\partial^2 \, \ }{\partial t \partial s}}}\,f(n_1 \exp(-t Z_1)\exp(-s\,Z_2)\,n_2\,w\,{\it o})
\big|_{s=t=0}\,.
\end{aligned}
\end{equation}
These two identities imply b) - d).

For the proof of e), we fix $Z_1 \in w\,\fn_- \, w^{-1} \cap \fn \,$ and $\,Z_2 \in w\,\fn \, w^{-1} \cap \fn_-\,$.
As $\,Y_j\,$ runs over a basis of $\,w\,\fn\,w^{-1}\,$, the values of the vector fields $\,m(Y_j)\,$ at any point
of $\,U_w\,$ span the tangent space. We can therefore write
\begin{equation}
\label{mvf5}
\begin{aligned}
&m(Z_1)\ = \ {\sum}_i \ a_i\, m(Y^+_i) \ + \ {\sum}_j \ b_j\, m(Y^-_j) \,,
\\
&\ \ \ \text{with}\ \ \ Y^+_i \in w\,\fn \, w^{-1} \cap \fn \,,\ \ Y^-_j \in w\,\fn \, w^{-1} \cap \fn_- \,,\ \ \ a_i ,
b_j \in C^\infty(U_w) \,;
\end{aligned}
\end{equation}
this expression becomes unique when we assume, as we may, that the $\,Y^+_i\,,\, Y^-_j\,$ are linearly independent.
Then
\begin{equation}
\label{mvf6}
\begin{aligned}
\left. [ m(Z_1),m(Z_2)] \  \right. &= \ {\sum}_i \ \big( a_i\, [m(Y^+_i),m(Z_2)] - (m(Z_2)a_i) \, m(Y^+_i) \,\big)
\\
&\ \ \ \ \ + \ {\sum}_j \ \big( b_j\, [m(Y^-_j),m(Z_2)] - (m(Z_2)b_j)\, m(Y^-_j) \,\big)\,.
\end{aligned}
\end{equation}
On the other hand, for $\,f\in C^\infty(U_w)$,
\begin{equation}
\label{mvf7}
\begin{aligned}
&(m([Z_1,Z_2] )f )(w \, {\it o})\ = \ {\textstyle{\frac{\partial \ }{\partial t}}} f(\exp(-t[Z_1,Z_2])\,w\,{\it
o})\big|_{t=0}
\\
&= \ {\textstyle{\frac{\partial^2 \, \ }{\partial t \partial s}}} \big( f(\exp(-s Z_2)\exp( -t Z_1)\,w\,{\it o}) -
f(\exp(-t Z_1)\exp( -s Z_2) \,w\,{\it o})\big)\big|_{s=t=0} \!\!\!\!\!\!\!\!\!
\\
&= \ -\, {\textstyle{\frac{\partial^2 \, \ }{\partial t \partial s}}} \,f(\exp(-t Z_1)\exp( -s Z_2) \,w\,{\it
o})\big|_{s=t=0}
\\
&= \ -\, {\textstyle{\frac{\partial \ }{\partial s}}}\big( m(Z_1) f\big)(\exp(-s\,Z_2)\,w\,{\it o})\big|_{s=0}
\\
&= \ -\, \big( m(Z_2)m(Z_1) f\big)(w\,{\it o}) \,;
\end{aligned}
\end{equation}
at the second step we have used the identity
\begin{equation}
\label{mvf8} \exp(sZ_2)\exp(tZ_1) = \exp ( sZ_2 + t Z_1 + {\textstyle{\frac 12}} st[Z_2,Z_1] + s^2 \cdots + t^2 \cdots
+ \cdots )\,,
\end{equation}
at the third, the fact that $\,\exp(-tZ_1)w{\it o} \equiv w {\it o}\,$, and at the last two steps the definition of
$\,m(Z)\,$. We now substitute the expression (\ref{mvf5}) for $\,m(Z_1)\,$ and note that $\,m(Z_1)|_{w{\it o}} =
0\,$, hence $\,a_i(w{\it o})= 0\,$ and $\,b_j(w{\it o})= 0\,$:
\begin{equation}
\label{mvf9}
\begin{aligned}
(m([Z_1,Z_2] f )(w \, {\it o})\ = \ &- {\sum}_i \, \big( m(Z_2)\,a_i \big) (w \, {\it o})\big(m(Y^+_i)f\big)(w \, {\it
o})
\\
& \qquad - {\sum}_j \, \big( m(Z_2)\,b_j\big)(w \, {\it o})\big(m(Y^-_j)f\big)(w \, {\it o}) \,.
\end{aligned}
\end{equation}
Comparing this to (\ref{mvf6}), and using the vanishing of the $\,a_i\,$ and $\,b_j\,$ at $\,w{\it o}\,$, we see
that the values at $\,w{\it o}\,$ of the two vector fields $\,[m(Z_1),m(Z_2)]\,$ and $\,m([Z_1,Z_2])\,$ coincide.
Both vector fields are invariant under $\,w N(\R) w^{-1} \cap N(\R)\,$, which acts transitively on $\,C_w\,$, so
their difference vanishes along $\,C_w\,$. It is therefore tangential to $\,C_w\,$, as asserted by e).

We use the criterion in \lemref{hypertanlem} to verify f). By a) $\,m(Z_1)\,$ is tangential to $\,C_w\,$. It
remains to be shown that the commutators of $\,m(Z_1)\,$ with $\,a\,m(Y^+)\,$ and $\,b\,m(Y^-)\,$ are tangential to
$\,C_w\,$, for any $\,Y^+ \in w \fn w^{-1} \cap \fn\,$, $\,Y^- \in w \fn w^{-1} \cap \fn_-\,$, and $\,a, b \in
C^\infty(U_w)\,$. Both $\,m(Z_1)\,$ and $\,a\,m(Y^+)\,$ are tangential to $\,C_w\,$ by a), hence so is their
commutator. Because of a), e) and the hypotheses on $\,Z_1\,$, the commutator of $\,m(Z_1)\,$ and $\,m(Y^-)\,$ is
also tangential to $\,C_w\,$. It remains to be shown that $\,\big( m(Z_1) b\big) m(Y^-)\,$ is tangential to
$\,C_w\,$, or equivalently that $\,m(Z_1) b \,$ vanishes along $\,C_w\,$ --\,\ or, in view of the
$\,(wN(\R)w^{-1}\cap N(\R))$-invariance, that $\,(m(Z_1) b)(w{\it o}) = 0 \,$. But this is clear: $\,Z_1 \in w
\fn_- w^{-1}\,$= isotropy subalgebra at $\,w{\it o}\,$, so the value $\,m(Z_1)|_{w{\it o}}\,$ of the vector field
$\,m(Z_1)\,$ at $\,w{\it o}\,$ is zero.
\end{proof}

\begin{prop}
\label{hypertanprop} We consider a distribution section $\,\sigma\,$ of $\,\mathcal L_{\l,\d}\,$ with support in
$\,C_w\,$, as in (\ref{schw}), and some $\,Z \in w \fn_- w^{-1} \cap \fn\,$, subject to the following
hypotheses:\newline \noindent {\rm {\bf i)}}\ \ \ $[\,Z \, , \,w\,\fn w^{-1} \cap \fn_-\,] \ \subset \ \fa + \fn +
w\,\fn_-\,w^{-1}\,${\rm ;}\newline \noindent {\rm {\bf ii)}}\,\,\  $\ell(Z) - m(Z)\,$ annihilates $\,\sigma\,$.
\newline \noindent Then $\,\ell(Z)\,$ lowers the normal degree of $\,\sigma\,$ by one.
\end{prop}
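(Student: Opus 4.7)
The proposition is designed to be an essentially immediate consequence of the two preceding lemmas. The strategy is: use hypothesis \textbf{ii)} to replace $\ell(Z)\sigma$ by $m(Z)\sigma$, then appeal to \lemref{hypertanlem2}\,f) to see that $m(Z)$ is hypertangential to $C_w$, and finally invoke \lemref{hypertanlem} to conclude that $m(Z)$ strictly lowers the normal degree of $\sigma$.

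More concretely, I would begin by observing that hypothesis \textbf{ii)} gives the identity $\ell(Z)\sigma = m(Z)\sigma$ as distribution sections of $\mathcal L_{\l,\d}$ on $U_w$. Both sides are supported in $C_w$ (since $\sigma$ is), so it suffices to show that $m(Z)\sigma$ has strictly lower normal degree than $\sigma$. Next I would verify that $Z$ satisfies exactly the hypotheses placed on $Z_1$ in part \textbf{f)} of \lemref{hypertanlem2}: membership $Z \in w\fn_- w^{-1} \cap \fn$ is given, and the bracket condition $[Z,\, w\fn w^{-1}\cap \fn_-] \subset \fa + \fn + w\fn_- w^{-1}$ is precisely hypothesis \textbf{i)}. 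That lemma then produces hypertangentiality of the vector field $m(Z)$.

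Having established hypertangentiality, the second statement in \lemref{hypertanlem} applies directly: applying a hypertangential vector field to a distribution supported on $C_w$ strictly reduces the normal degree. Combined with $\ell(Z)\sigma = m(Z)\sigma$, this yields the desired conclusion that $\ell(Z)$ lowers the normal degree of $\sigma$ by (at least) one. In particular, when $\sigma$ already has normal degree zero, $\ell(Z)\sigma$ must vanish.

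There is no real obstacle once the framework of the preceding lemmas is in place; the proposition is best viewed as a convenient repackaging of Lemmas \ref{hypertanlem} and \ref{hypertanlem2}\,f) in the form in which it will be used downstream. The one subtlety worth emphasizing in the write-up is that the vector fields $\ell(Z)$ and $m(Z)$ generally differ \emph{off} $C_w$; what hypothesis \textbf{ii)} asserts is an equality of their actions on the specific distribution $\sigma$ supported on $C_w$, which is exactly what is needed to transfer the normal-degree estimate from the geometric object $m(Z)$ (where hypertangentiality is visible) to the Lie-algebraic object $\ell(Z)$ (which is what one actually controls in the applications to $R_{j,m,k}\tau$ and $S_{j,m,k}\tau$).
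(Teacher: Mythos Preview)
Your proposal is correct and follows exactly the paper's approach: use \lemref{hypertanlem2}\,f) (fed by hypothesis~i)) to see that $m(Z)$ is hypertangential, invoke \lemref{hypertanlem} to lower the normal degree of $m(Z)\sigma$, and use hypothesis~ii) to transfer this to $\ell(Z)\sigma$. If anything, you are more explicit than the paper, which compresses the last two steps into a single sentence and prefaces the argument with the harmless reduction to scalar $\sigma$.
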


The hypothesis ii) requires explanation. The meaning of the action of $\,\ell(Z)\,$ on $\,\sigma\,$ is clear, not
just for the specific $\,Z\,$ in the proposition, but for any $\,Z\,$ in the Lie algebra of $\,G(\R)\,$. The
formula (\ref{mvf}), in contrast, only defines $\,m(Z)\,$ as a vector field on $\,U_w\,$. To attach meaning to
$\,m(Z)\sigma\,$, we express $\,m(Z)\,$ as a linear combination $\,m(Z) = \sum_j \, a_j\,\ell(Z_j)\,$, with
$\,Z_j\,$ running over a basis of $\,w\fn w^{-1}\,$, and with coefficients $\,a_j \in C^\infty(U_w)\,$; we
interpret $\,m(Z)\sigma\,$ as equivalent to $\,\sum_j \, a_j\,\ell(Z_j)\sigma\,$.

In our applications, $\,\sigma\,$ has invariance properties that rule out a strictly lower normal degree for
$\,\ell(Z)\sigma\,$, and that will allow us to conclude $\,\sigma\,$ must vanish. As mentioned in the introduction,
this is closely related to the main mechanism of proof in the paper \cite{chm} of Casselman-Hecht-Mili{\v c}i{\'c}.

\begin{proof}
We remarked earlier that we may suppose, without loss of generality, that $\,\sigma\,$ is a scalar distribution.
According to \lemref{hypertanlem2} $\,m(Z)\,$ is hypertangential to $\,C_w\,$. Thus, by \lemref{hypertanlem},
$\,\ell(Z)\,\sigma\,$ has strictly lower normal degree than $\,\sigma\,$.
\end{proof}

We return to our earlier notation, with $\,\tau \in C^{-\infty}(X,\mathcal L_{\l,\d})^{G(\Z)}$, and with the
$\,R_{j,m,k}\tau$ and $\,S_{j,m,k}\tau$ as defined in section \ref{sec:background}. Recall the definition of the
codimension one Schubert cells $\,C_j = C_{s_j}\,$.

\begin{lem}
\label{inforderlem3} For $\,1 \leq j \leq n-2\,$ and all choices of $\,m\,$ and $\,k\,$, $\,R_{j,m,k}\tau$ vanishes
to infinite order along the codimension one Schubert cells $\,C_{\tilde j}\,$, provided $\,\tilde j\,$ equals
neither $\,j\,$ nor $\,j+1\,$. In the case of $\,j=0\,$, $\,R_{0,1,k}\tau$ vanishes to infinite order along all the
$\,C_{\tilde j}\,$. In both cases, the vanishing to infinite order is uniform in $\,m\,$ and the multi-index
$\,k\,$, in the sense of \cite[definition 7.1]{inforder}.
\end{lem}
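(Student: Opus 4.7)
The plan is the one outlined immediately after Proposition~\ref{hypertanprop}: for each codimension-one Schubert cell $C_{\tilde j}$ covered by the statement (i.e.\ $\tilde j\neq j,j+1$, or all $\tilde j$ when $j=0$), first exhibit a canonical extension $\widetilde R$ of $R_{j,m,k}\tau$ restricted to the complement of $C_{\tilde j}$; the difference $\delta:=R_{j,m,k}\tau-\widetilde R$ is then a distribution supported on $C_{\tilde j}$, and the task reduces to showing $\delta=0$.

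For the construction of $\widetilde R$, I would exploit that $R_{j,m,k}\tau|_{X_0}$, being $N_{j+1}(\Z)$-invariant, is periodic in each coordinate $x_i$ with $i\neq j+1$. Via the $\Phi_{\tilde j}$-fibration of $X_0\cup C_{\tilde j}$ (the analogue of~(\ref{flag8}) with $j$ replaced by $\tilde j$), the limit $x_{\tilde j}\to\infty$ along each fiber is approach to $C_{\tilde j}$. Either directly from the character transformation law of $R_{j,m,k}\tau$ under $h_{\tilde j}(t)$, or via cuspidality propagating through the defining integrations, the zeroth Fourier mode (in $x_{\tilde j}$) of the pertinent fiber restriction vanishes, so~(\ref{inf3}) supplies a canonical extension across $\infty$ in each fiber. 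The packaging argument of Lemma~\ref{inforderlem1} assembles these fiberwise extensions into a canonical extension $\widetilde R$ across $C_{\tilde j}$, uniformly in $(m,k)$. This step genuinely uses $\tilde j\neq j+1$ (so $h_{\tilde j}\in N_{j+1}(\R)$ and periodicity in $x_{\tilde j}$ holds) and $\tilde j\neq j$ (so the vanishing of the zeroth mode is not obstructed by the $my_j$ coefficient in $\chi^R_{j,m,k}$).

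For the vanishing of $\delta$, apply Proposition~\ref{hypertanprop} with $w=s_{\tilde j}$. A root-space computation gives $s_{\tilde j}\fn_- s_{\tilde j}^{-1}\cap\fn=\R\,E_{\tilde j,\tilde j+1}$, so take $Z=E_{\tilde j,\tilde j+1}$. Since $\tilde j\neq j+1$, $Z\in\fn_{j+1}$, so $\ell(Z)R_{j,m,k}\tau=c\,R_{j,m,k}\tau$ with $c$ a nonzero multiple of $k_{\tilde j}$ coming from differentiating $\chi^R_{j,m,k}$. Hypothesis~(i) of the proposition, $[Z,s_{\tilde j}\fn s_{\tilde j}^{-1}\cap\fn_-]\subset\fa+\fn+s_{\tilde j}\fn_- s_{\tilde j}^{-1}$, is an immediate bracket calculation: the one relevant commutator is $[E_{\tilde j,\tilde j+1},E_{\tilde j+1,\tilde j}]=E_{\tilde j,\tilde j}-E_{\tilde j+1,\tilde j+1}\in\fa$. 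Hypothesis~(ii), that $\ell(Z)-m(Z)$ annihilates $\delta$, is the main technical obstacle: one must unwind $m(Z)$ via the factorization $wN(\R)w^{-1}=(wN(\R)w^{-1}\cap N(\R))(wN(\R)w^{-1}\cap N_-(\R))$ and use that $\delta$, supported on the first-factor orbit $C_{\tilde j}$ and inheriting the character transformation law of $R_{j,m,k}\tau$, satisfies on $C_{\tilde j}$ exactly the $\Ad(n_1)$-twisted translation identity built into~(\ref{mvf2}).

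With both hypotheses verified, Proposition~\ref{hypertanprop} gives that $\ell(Z)$ strictly lowers the normal degree of $\delta$; but $\ell(Z)\delta=c\,\delta$ preserves the normal degree whenever $c\neq 0$, i.e.\ whenever $k_{\tilde j}\neq 0$, forcing $\delta=0$. The residual case $k_{\tilde j}=0$ is handled by cuspidality: iterating the $R\leftrightarrow S$ reductions of Lemma~\ref{RSlem} expresses $R_{j,m,k}\tau$ in terms of the Fourier coefficients $c_{k'}$ of $\tau_{\text{abelian}}$ via Lemma~\ref{Rjk1taulem}, and these vanish as soon as any coordinate of $k'$ is zero, making $R_{j,m,k}\tau$ itself zero in that case. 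Uniformity of the whole argument in $(m,k)$ is preserved by \cite[Lemma~7.2]{inforder}.
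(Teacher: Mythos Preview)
Your overall strategy---build a canonical extension across $C_{\tilde j}$, then apply Proposition~\ref{hypertanprop} with $w=s_{\tilde j}$ and $Z=E_{\tilde j,\tilde j+1}$ to kill the difference---is exactly the paper's, and your verification of hypothesis~(i) and the eigenvalue contradiction are correct. There are, however, two genuine gaps.

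First, the canonical-extension step for $j\geq 1$ does not follow from~(\ref{inf3}). Your periodicity argument is precisely what the paper uses for $R_{0,1,k}\tau$, whose restriction to $X_0$ is a multi-periodic distribution on $N(\R)/N'(\R)$ to which \cite[proposition~2.19]{inforder} applies directly. But for $j\geq 1$, $R_{j,m,k}\tau|_{X_0}$ is a character only in the $N_{j+1}$ directions; in the $x_{j+1}$ direction it is a genuine non-periodic tempered distribution, so ``fiber restriction'' to the $\Phi_{\tilde j}$-fibers is undefined and (\ref{inf3}) cannot be invoked fiberwise. The paper instead uses temperedness to write $R_{j,m,k}\tau\circ\Phi_{j+1}|_\R=\frac{d^r}{dx^r}f$ with $f$ continuous of polynomial growth, reduces via a differential-operator identity to extending $\chi^R_{j,m,k}\times f$ across $C_{\tilde j}$, and then exploits $\chi^R_{j,m,k}\times f=(2\pi i k_{\tilde j})^{-r}(\partial/\partial x_{\tilde j})^{r}(\chi^R_{j,m,k}\times f)$ together with the second-order vanishing of $\partial/\partial x_{\tilde j}$ along $C_{\tilde j}$ to meet the criterion of \cite[definition~2.4]{inforder}.

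Second, your handling of $k_{\tilde j}=0$ is wrong: iterating the $R\leftrightarrow S$ relations of Lemma~\ref{RSlem} does not reduce $R_{j,m,k}\tau$ to a single coefficient $c_{k'}$, and the index changes along the way do not preserve the vanishing of a fixed coordinate. The paper's argument is direct and simpler: for $\tilde j\neq j,\,j+1$ and $k_{\tilde j}=0$, the character $e(k\cdot x+my_j)$ is trivial on the unipotent radical $U_{\tilde j,n-\tilde j}(\R)$, so the defining integral factors through $\int_{U_{\tilde j,n-\tilde j}(\R)/U_{\tilde j,n-\tilde j}(\Z)}\ell(n)\tau\,dn=0$ by cuspidality, whence $R_{j,m,k}\tau=0$ outright. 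You also omit the exceptional case $j=0$, $\tilde j=1$: there $R_{0,1,k}\tau$ is independent of $k_1$, so no eigenvalue contradiction is available; the paper Fourier-decomposes under the $\R/\Z$-action of $h_1(t)$, applies Proposition~\ref{hypertanprop} to each nonzero mode, and uses cuspidality~(\ref{cuspi0}) to kill the zero mode.
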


\begin{proof}
We begin with an auxiliary result. In all cases covered by the statement of the lemma, with the single exception of
$\,j=0\,$, $\,\tilde j=1\,$,
\begin{equation}
\label{cuspi} k_{\tilde j} \ = \ 0 \ \ \ \ \Longrightarrow\ \ \ \ R_{j,m,k}\tau \ = \ 0 \,;
\end{equation}
in the remaining exceptional case, $\,\ell(h_1(1))\,$ acts as the identity on $\,R_{0,1,k}\tau\,$, and
\begin{equation}
\label{cuspi0} \int_{\R/\Z} \ell(h_1(t))\, R_{0,1,k}\tau \, dt  \ = \ 0 \,.
\end{equation}
Indeed, by (\ref{Rjkmtau}),
\begin{equation}
\label{cuspii}
\begin{aligned}
&\!\!R_{j,m,k}\tau \ = \ \int_{\left\{\begin{smallmatrix}{(x,y)\in(\R/\Z)^{2n-3}} \\
{x_{j+1}\,=\,0\, \ \ \ \ \ \ \ \ }\end{smallmatrix}\right\}} e(k\cdot x + my_j)\,\, \ell(n_{x,y})\tau''\, dx\, dy
\\
&\ = \, \int_{\left\{\begin{smallmatrix}{(x,y)\in(\R/\Z)^{2n-3}} \\
{x_{j+1}\,=\,0\, \ \ \ \ \ \ \ \ }\end{smallmatrix}\right\}} \int_{N''(\R)/N''(\Z)} e(k\cdot x + my_j)\,\,
\ell(n_{x,y}\,n'')\tau \, dn'' \,dx\, dy \,.
\end{aligned}
\end{equation}
Let $\,U_{\tilde j,n-\tilde j}(\R)\subset N(\R)\,$ denote the unipotent radical of the standard upper para\-bolic
of type $\,\tilde j \times (n-\tilde j)\,$. The double integral in (\ref{cuspii}) can be combined into a single
integral. When $\,(j,\tilde j )\neq (0,1)\,$, the resulting integration can be performed by first integrating over
$\,U_{\tilde j,n-\tilde j}(\R)/U_{\tilde j,n-\tilde j}(\Z)\,$, then over the remaining variables. The hypotheses,
including the assumption that $\,k_{\tilde j}=0\,$, ensure that the character $\,e(k\cdot x + my_j)\,$ is
identically equal to one on $\,U_{\tilde j,n-\tilde j}(\R)\,$. Hence the cuspidality of $\,\tau\,$ implies
\begin{equation}
\label{cuspiii}
\begin{aligned}
\int_{U_{\tilde j,n-\tilde j}(\R)/U_{\tilde j,n-\tilde j}(\Z)}\  e(k\cdot x + my_j)\,\, \ell(n)\tau\, dn \ \ &=
\\
= \ \ \int_{U_{\tilde j,n-\tilde j}(\R)/U_{\tilde j,n-\tilde j}(\Z)}\ \ell(n)\tau\, dn \ \ &= \ \ 0\,.
\end{aligned}
\end{equation}
and that, in turn, implies the vanishing of the integral (\ref{cuspii}). When $\,j=0\,$, $\,\tilde j = 1\,$, the
same argument applies, provided we first average $\,\ell(h_1(t))\, R_{0,1,k}\tau\,$ over $\,\R/\Z\,$.

As the first step towards the proof of vanishing to infinite order, we show that the restrictions of the
$\,R_{j,m,k}\tau$ to the open Schubert cell $\,X_0\,$ have canonical extensions across the $\,C_{\tilde j}\,$,
$\,\tilde j \neq j,\,j+1\,$, and in the case of $\,R_{0,1,k}\tau$ across all the $\,C_{\tilde j}\,$. The argument
is slightly different, and also slightly simpler, for $\,R_{0,1,k}\tau$. We use (\ref{flag8}) to identify $\,X_0\,$
with $\,N_{\tilde j}(\R)\times \R\,$ and simultaneously, $\,X_0\cup C_{\tilde j}\,$ with $\,N_{\tilde j}(\R)\times
\R\mathbb P^1$. In view of (\ref{tauabelian}), (\ref{tauabelexp}), (\ref{tau'3}), and (\ref{Rjk1tau}),
$\,R_{0,1,k}\tau \big|_{X_0}\,$ can then be regarded as a Fourier series on $\,N_{\tilde j}(\R)\times \R\,$ --
which happens to be constant in the entries corresponding to $\,N'(\R)\,$ -- whose expansion in the variable
$\,x_{\tilde j}\,$,\,\ corresponding to the factor $\,\R\,$, has zero constant term. Thus \cite[proposition
2.19]{inforder} applies directly: $\,R_{0,1,k}\tau \big|_{X_0}\,$ has a canonical extension across $\,C_{\tilde
j}\,$, at least as a scalar distribution. However, the discrepancy between a distribution section of $\,\mathcal
L_{\l,\d}\,$ and a scalar distribution on the complement of $\,C_{\tilde j}\,$, in terms of coordinates valid along
$\,C_{\tilde j}\,$, is a factor of the type $\,|x_{\tilde j}|^\nu\,(\sgn x_{\tilde j})^\eta$, which does not affect
the notion of vanishing to infinite order; cf. (\ref{inf1}). The uniformity in the sense of \cite{inforder},
finally, follows from the fact that the Fourier coefficients of the distribution $\,\tau_{\text{abelian}}\,$ --
like those of any periodic distribution -- are bounded by some polynomial in the length $\,\|k\|\,$ of the
multi-index $\,k\,$.

We now suppose $\,1 \leq j \leq n-2\,$, $\,1 \leq \tilde j \leq n-1\,$, $\,\tilde j \neq j,\,j+1\,$. Under these
conditions we want to show that $\,R_{j,m,k}\tau \big|_{X_0}\,$ has a canonical extension across $\,C_{\tilde
j}\,$. Just as in the case of $\,R_{0,1,k}\tau\,$, we may as well regard $\,R_{j,m,k}\tau \big|_{X_0}\,$ as a
scalar distribution. We consider the fibration (\ref{flag8}) with $\,j\,$ replaced by $\,j+1\,$, which then allows
us to identify
\begin{equation}
\label{schw9} R_{j,m,k}\tau \big|_{X_0}\ \cong \ \chi^R_{j,m,k} \times R_{j,m,k}\tau \circ \Phi_{j+1} \big|_\R
\end{equation}
as in (\ref{Pjkmtau7}). Except for a translation and change of sign, $\,R_{j,m,k}\tau \circ \Phi_{j+1} \big|_\R\,$
coincides with $\,\rho_{j,m,k}\,$ (\ref{rhosigma}), which is a tempered distribution by proposition
\ref{sigmarhoftprop}. We can therefore express $\,R_{j,m,k}\tau \circ \Phi_{j+1} \big|_\R\,$ as a sufficiently high
derivative of a continuous function of polynomial growth:
\begin{equation}
\label{schw10} R_{j,m,k}\tau \circ \Phi_{j+1} \big|_\R\ = \ {\textstyle\frac{d^r\ }{dx^r}}\,f \qquad \text{with}\ \ \
|f(x)|\, = \, O(|x|^s)\ \ \ \text{as}\ \ |x| \to \infty\,,
\end{equation}
for some $\,r,s \in \N\,$. The variable $\,x\,$ in this identity is really $\,x_{j+1}\,$, the $(j+1,j+2)$ matrix
entry of $\,n_{x,y}\,$ (\ref{nxy}), and differentiation in this direction is the vector field
$\,\ell(-E_{j+1,j+2})\,$, i.e., infinitesimal translation by $\,-E_{j+1,j+2}\,$\,\ --\,\ the inverse in
(\ref{ellofg}) accounts for the minus sign. The diffeomorphism
\begin{equation}
\label{schw11} X_0 \ \cong \ N(\R)\ \cong N_{j+1}(\R) \times \R\,,
\end{equation}
which underlies the identification (\ref{schw9}), involves left multiplication by the $\,N_{j+1}(\R)$ factor. Hence
(\ref{schw10}) can be re-written as follows:
\begin{equation}
\label{schw12} R_{j,m,k}\tau (n_{x,y}n'') \ = \ \ell(-\Ad(n_{x,y}n'')E_{j+1,j+2})^r \(\chi^R_{j,m,k} \times
f\)(n_{x,y}n'')\,,
\end{equation}
for $\,(x,y)\in \R^{n-1}\times \R^{n-2}\,$  and $\,n''\in N''(\R)\,$. A simple calculation shows
\begin{equation}
\label{schw13} \Ad(n_{x,y}n'')E_{j+1,j+2}\ \ \equiv \ \ E_{j+1,j+2} + x_j\,E_{j,j+2}\ \ \ \text{modulo}\ \
\operatorname{Ker}\chi^R_{j,m,k} \,.
\end{equation}
Since $\,E_{j+1,j+2}\,$ and $\,E_{j,j+2}\,$ commute, and since $\,\ell(E_{j,j+2})\,$ acts on $\,\chi^R_{j,m,k}\,$
as multiplication by $-2\,\pi\,i\,m\,$,
\begin{equation}
\label{schw14} R_{j,m,k}\tau (n_{x,y}n'') \ = \ (2\,\pi\,i\,m\,x_j \,- \, \ell(E_{j+1,j+2}))^r \(\chi^R_{j,m,k} \times
f\)(n_{x,y}n'')\,.
\end{equation}
Since $\,{\tilde j} \neq j\,$, the function $\,x_j\,$ is smooth along $\,C_{\tilde j}\,$ -- cf. (\ref{flag8}), with
$\,{\tilde j}\,$ in place of $\,j\,$. That makes $\,(2\,\pi\,i\,m\,x_j - \ell(E_{j+1,j+2}))\,$ a linear
differential operator with $C^\infty$ coefficients on some neighborhood\begin{footnote}{On the complement of the
closure of $\,C_j\,$ in $\,X\,$, in fact.}\end{footnote} of $\,C_{\tilde j}\,$. The action of such a differential
operator does not affect vanishing to infinite order along $\,C_{\tilde j}\,$. Hence, to show that
$\,R_{j,m,k}\tau,$ has a canonical extension across $\,C_{\tilde j}\,$, it suffices to establish the same fact for
$\,\chi^R_{j,m,k} \times f\,$.

We may suppose $\,k_{\tilde j} \neq 0\,$ by (\ref{cuspi}), and the vector field $\,\ell(E_{{\tilde j},{\tilde
j}+1})\,$ acts on $\,\chi^R_{j,m,k}\,$ as multiplication by the factor $\,- 2\,\pi\,i\,k_{\tilde j}\,$. Thus, for
any $\,r \in \N\,$,
\begin{equation}
\label{schw15} \chi^R_{j,m,k}\times f\ \ = \ \ (- 2\,\pi\,i\,k_{\tilde j})^{-r}\, \ell(E_{{\tilde j},{\tilde
j}+1})^r\,\chi^R_{j,m,k}\times f\,.
\end{equation}
We use (\ref{flag8}), with $\,{\tilde j}\,$ in place of $\,j\,$, to identify $\,X_0 \cup C_{\tilde j} \cong
N_{\tilde j} \times \R\mathbb P^1$. Then $\,x_{\tilde j}\,$ becomes a coordinate on the fiber, which takes the
value $\,\infty\,$ exactly along $\,C_{\tilde j}\,$. Arguing as we did between (\ref{schw10}) and (\ref{schw12}),
we find
\begin{equation}
\label{schw16} {\textstyle\frac{\partial\ }{\partial x_{\tilde j}}} \ \ = \ \ \ell(-\Ad(n_{x,y}n'')E_{{\tilde
j},{\tilde j}+1}) \ \ \ \ \ (\, n'' \in N''(\R)\,).
\end{equation}
Since $\,\tilde j \neq j\,,j+1\,$, $\,Ad(n_{x,y}n'')E_{{\tilde j},{\tilde j}+1} \equiv E_{{\tilde j},{\tilde
j}+1}\,$ modulo the kernel of $\,\chi^R_{j,m,k}\,$. That makes (\ref{schw15}) equivalent to
\begin{equation}
\label{schw17} \chi^R_{j,m,k}\times f\ \ = \ \ (2\,\pi\,i\,k_{\tilde j})^{-r}\, {\textstyle\frac{\partial^r\ }{\partial
x_{\tilde j}^r}}\(\chi^R_{j,m,k}\times f\)\,.
\end{equation}
According to (\ref{schw10}) the function $\,\chi^R_{j,m,k} \times f\,$ is locally bounded along $\,C_{\tilde j}\,$.
Since $\,\frac{\partial\ }{\partial x_{\tilde j}}\,$ vanishes to second order\begin{footnote}{In terms of the
coordinate change $\,t = 1/x_{\tilde j}\,$,\,\ $\,\frac{\partial\ }{\partial x_{\tilde j}}= - t^2\,\frac{\partial\
}{\partial t}$.}\end{footnote} on $\,\{x_{\tilde j}=\infty\} = C_{\tilde j}\,$, the criterion of \cite[definition
2.4]{inforder} applies: $\,R_{j,m,k}\tau\,$ has a canonical extension across $\,C_{\tilde j}\,$, as asserted.

To establish the uniformity in the sense of \cite{inforder}, we only need to argue that the bound implicit in
(\ref{schw10}) holds uniformly in $\,m\,$ and $\,k\,$, with a bounding constant which depends polynomially on
$\,m\,$ and $\,\|k\|\,$. But this is a consequence of the fact that the $\,\chi^R_{j,m,k}\,$ are the Fourier
coefficients of a globally defined distribution.

To finish the proof we must show that $\,R_{j,m,k}\tau \,$ vanishes to infinite order on $\,C_{\tilde j}\,$, or
equivalently, that the difference
\begin{equation}
\label{schw18} s_{j,m,k,\tilde j} \ \ = \ \ R_{j,m,k}\tau  \,\ - \,\ \text{canonical extension of }\, R_{j,m,k}\tau \,\
\text{across $\,C_{\tilde j}$}
\end{equation}
vanishes. We note that $\,s_{j,m,k,\tilde j}\,$ is a $\,\mathcal L_{\l,\d}$-valued distribution, defined on a
neighborhood of $\,C_{\tilde j}\,$, specifically on $\,X_0 \cup C_{\tilde j}\,$. By definition,
\begin{equation}
\label{schw19} s_{j,m,k,\tilde j} \ \ \text{is supported on $\,C_{\tilde j}\,$.}
\end{equation}
Because of the canonical nature\begin{footnote}{Specifically (\ref{inf1}).}\end{footnote} of the canonical
extension, $\,s_{j,m,k,\tilde j}\,$ inherits invariance properties from $\,R_{j,m,k}\tau\,$. In particular,
\begin{equation}
\label{schw21} \ell(n)\,s_{j,m,k,\tilde j} \ = \ \chi^R_{j,m,k}(n^{-1})\,s_{j,m,k,\tilde j}\ \ \ \text{for}\ \ n \in
N_{j+1}(\R) \,;
\end{equation}
because $\,R_{j,m,k}\tau\,$ satisfies the corresponding equation; cf. (\ref{Rjkmtau2}). On the infinitesimal level,
this implies
\begin{equation}
\label{schw22} \ell(E_{\tilde j, \tilde j + 1})\,s_{j,m,k,\tilde j} \ = \ -2\,\pi\,i\,k_{\tilde j}\,s_{j,m,k,\tilde j}
\,,
\end{equation}
except when $\,j=0\,$, $\,\tilde j = 1\,$; this exceptional case will be treated afterwards. We shall show that
(\ref{schw22}) forces $\,s_{j,m,k,\tilde j} = 0\,$, by applying proposition \ref{hypertanprop} with $\,w=s_{\tilde
j}\,$ and $\,Z = E_{\tilde j,\tilde j+1}\,$. In this situation $\,w \fn_- w^{-1} \cap \fn \,$ is spanned by
$\,E_{\tilde j,\tilde j+1}\,$ and $\,w \fn w^{-1} \cap \fn_- \,$ by $\,E_{\tilde j +1,\tilde j}\,$. Since $\,[
E_{\tilde j,\tilde j+1},E_{\tilde j+1,\tilde j}] \in \fa\,$, hypothesis i) of the proposition is satisfied. In view
of (\ref{mvf2}), to verify ii), we must show that $\,(\Ad n_1 - 1)E_{\tilde j,\tilde j+1}\,$ annihilates
$\,s_{j,m,k,\tilde j}\,$,\,\ for every $\,n_1 \in N_{\tilde j}(\R)\,$; note that $\,N_{\tilde j}(\R)\,$ plays the
role of $\,wN(\R) w^{-1}\cap N(\R)\,$ in the present context. But
\begin{equation}
\label{schw23}
\big( \Ad N_{\tilde j}(\R) - 1\big) E_{\tilde j,\tilde j+1}\ \ \subset \ \ \R\,E_{\tilde j-1,\tilde j+1}\ \oplus \ \R\,E_{\tilde j,\tilde j +2}\ \oplus \ \fn''\,;
\end{equation}
here $\,\fn''\,$ denotes the Lie algebra of $\,N''(\R)\,$, the second derived subgroup of $\,N(\R)\,$. Since
$\,\tilde j \neq j, j+1\,$, $\,\big( \Ad N_{\tilde j}(\R) - 1\big) E_{\tilde j,\tilde j+1}\,$ annihilates
$\,\chi^R_{j,m,k}\,$,\,\ and hence, by (\ref{schw22}), $\,s_{j,m,k,\tilde j}\,$ -- that is the hypothesis ii). Thus
$\,\ell(E_{\tilde j,\tilde j})\,$ lowers the normal degree of $\,s_{j,m,k,\tilde j}\,$, contradicting
(\ref{schw22}) unless $\,s_{j,m,k,\tilde j}=0\,$.

We now turn to the case $\,j=0\,$, $\,\tilde j = 1\,$. At the beginning of this proof, we pointed out that the
action of the one parameter group $\,h_1(t)\,$ -- which is generated by $\,E_{1,2}\,$ -- on $\,\,R_{0,1,k}\tau\,$
drops to an action of $\,\R/\Z\,$, and the average of $\,\,R_{0,1,k}\tau\,$ over $\,\R/\Z\,$ vanishes
(\ref{cuspi0}). In view of \cite[proposition 7.20]{inforder}, these properties are inherited by $\,s_{0,1,k,1}\,$:
\begin{equation}
\label{schw24}
\ell(h_1(1))\,s_{0,1,k,1} \ = \ s_{0,1,k,1}\,,\ \ \ \ \text{and}\ \ \ \
\int_{\R/\Z}\ell(h_1(t))\,s_{0,1,k,1}\,dt \ = \ 0\,.
\end{equation}
We now apply proposition \ref{hypertanprop} to the Fourier coefficients of this $(\,\R/\Z)$-action. Since
$\,h_1(t)\,$ preserves $\,C_1\,$, normalizes $\,N_1(\R)\,$, and acts on $\,E_{1,2}\,$ via a character, not only
does $\,s_{0,1,k,1}\,$ satisfy the hypothesis ii) of proposition \ref{hypertanprop} with $\,Z=E_{1,2}\,$, but its
Fourier coefficients also do. We can then argue exactly as before, and conclude that all the Fourier coefficients
vanish, except possibly the one corresponding to the trivial character. But (\ref{schw24}) asserts the vanishing of
the ``constant" Fourier coefficient, so $\,s_{0,1,k,1}=0\,$ in the case $\,j=0\,$, $\,\tilde j = 1\,$, as well.
\end{proof}

\begin{lem}
\label{inforderlem4} For $\,0 \leq j \leq n-2\,$ and all choices of $\,m\,$ as well as $\,k$, the
$\,R_{j,m,k}\tau\,$ vanish to infinite order along $\,C_{j+1}\,$. Moreover, this is the case uniformly in $\,m\,$
and $\,k\,$, in the sense of \cite{inforder}.
\end{lem}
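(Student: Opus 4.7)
The strategy mirrors the proof of \lemref{inforderlem3}: first show that $R_{j,m,k}\tau|_{X_0}$ admits a canonical extension across $C_{j+1}$, then force the defect distribution supported on $C_{j+1}$ to vanish. For the first step, the restriction $R_{j,m,k}\tau\circ\Phi_{j+1}|_\R$ agrees, up to a translation by $k_j/m$, with the tempered distribution $\rho_{j,m,k}$ furnished by proposition \ref{sigmarhoftprop}. Any tempered distribution on $\R$ can be written as $d^r f/dx^r$ for a continuous $f$ of polynomial growth, as in (\ref{schw10}), and such a representation admits a canonical extension across $\infty$ by the computation carried out from (\ref{schw10}) through (\ref{schw17}) in the proof of \lemref{inforderlem3}. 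Combined with the equivalences of \lemref{inforderlem1}, this gives the canonical extension of $R_{j,m,k}\tau|_{X_0}$ across $C_{j+1}$, with uniformity in $(m,k)$ inherited from the polynomial-growth bounds.

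The second step is the main difficulty. A direct application of proposition \ref{hypertanprop} with $w = s_{j+1}$ and $Z = E_{j+1,j+2}$ (the only possibility, since $w\,\fn_-\,w^{-1}\cap \fn = \R E_{j+1,j+2}$) is obstructed: for $n_1 = \exp(c\,E_{j,j+1})\in N_{j+1}(\R)$ one computes $(\Ad n_1 - 1)\,E_{j+1,j+2} = c\,E_{j,j+2}$, and $d\chi^R_{j,m,k}(E_{j,j+2}) = -2\pi i\, m \ne 0$, so hypothesis (ii) of proposition \ref{hypertanprop} fails on the defect distribution $s = R_{j,m,k}\tau - (\text{canonical extension})$. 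Overcoming this obstruction is the main technical point of the lemma, and the place where the argument must go beyond the direct Whittaker-style reasoning behind \lemref{inforderlem3}.

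My plan is to combine the hypertangential mechanism with a normal-degree induction driven by enveloping-algebra relations. The defect $s$ transforms under $N_{j+1}(\R)$ by the character $\chi^R_{j,m,k}$, so $\ell(E_{j,j+1})\,s = -2\pi i\, k_j\,s$ and $\ell(E_{j,j+2})\,s = -2\pi i\, m\,s$. The commutator identity $[E_{j,j+1},\,E_{j+1,j+2}^\ell] = \ell\,E_{j+1,j+2}^{\ell-1}\,E_{j,j+2}$ (which holds because $E_{j,j+2}$ commutes with $E_{j+1,j+2}$), combined with the tangentiality of $\ell(E_{j,j+1})$ to $C_{j+1}$ and the fact that $\ell(E_{j+1,j+2})$ is the normal direction, yields, after decomposing $s$ as $\sum_{\ell=0}^d \ell(E_{j+1,j+2})^\ell\,s^{(\ell)}$ with $s^{(\ell)}$ of normal degree zero, a recursion that forces the top-degree component to vanish whenever $d\ge 1$. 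Iterating reduces $d$ to zero; this serves as a more delicate substitute for the normal-degree-lowering assertion in proposition \ref{hypertanprop}, and is the principal new ingredient compared to the proof of \lemref{inforderlem3}.

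In the remaining normal-degree-zero case, $s$ is a constant multiple of the canonical $\chi^R_{j,m,k}$-equivariant section on $C_{j+1}$. A nonzero such contribution would appear, under the fibration $X_0 \cup C_{j+1} \cong N_{j+1}(\R) \times \R\mathbb P^1$ and restriction to the $\Phi_{j+1}$-axis, as an added delta-function at $\infty$ in the extension of $\rho_{j,m,k}$; but the canonical extension built from the polynomial-growth representative of $\rho_{j,m,k}$ in step one is unique and carries no such delta, forcing $s = 0$. Uniformity in $m$ and $k$ is maintained throughout, since each step invokes only bounds already made uniform in earlier lemmas.
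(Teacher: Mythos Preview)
Your proposal has substantive gaps at each stage.

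\textbf{Step 1 (canonical extension).} Temperedness alone does not give a canonical extension across $\infty$. The computation (\ref{schw10})--(\ref{schw17}) you invoke works only for $\tilde j \neq j,\,j+1$: the decisive step (\ref{schw15})--(\ref{schw17}) uses that $\ell(E_{\tilde j,\tilde j+1})$ acts on $\chi^R_{j,m,k}$ by the nonzero scalar $-2\pi i k_{\tilde j}$, so that $\chi^R_{j,m,k}\times f$ equals an arbitrarily high power of $\partial/\partial x_{\tilde j}$---a vector field vanishing to second order along $C_{\tilde j}$---applied to a locally bounded function. For $\tilde j = j+1$ there is no analogous tangential direction on the $\Phi_{j+1}$-fiber that multiplies $\rho_{j,m,k}$ by a nonzero constant, and you cannot conclude anything from the bare identity $\rho_{j,m,k} = \frac{d^r}{dx^r} f$. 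The paper obtains the canonical extension by \emph{induction on $j$}: from $R_{j-1,m,k}\tau$ vanishing to infinite order along $C_j$ one deduces via \lemref{RSlem3} that $\sigma_{j,m,k}$ vanishes to infinite order at $0$, hence $\widehat\sigma_{j,m,k}$ has a canonical extension across $\infty$ by (\ref{inf2}), and then proposition~\ref{sigmarhoftprop} expresses $\rho_{j,m,k}$ as a finite sum of such $\widehat\sigma$'s.

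\textbf{Step 2 (normal-degree recursion).} Your commutator calculation gives
\[
\big[\ell(E_{j,j+1}) + 2\pi i k_j\big]\,\ell(E_{j+1,j+2})^\ell s \ = \ 2\pi i\, m\,\ell\,\ell(E_{j+1,j+2})^{\ell-1} s\,,
\]
but this does not force the top-degree piece $s^{(d)}$ to vanish. Comparing degree-$d$ components of $\ell(E_{j,j+1}) s = -2\pi i k_j s$ only tells you that $s^{(d)}$ again transforms by the same character under $E_{j,j+1}$ and $E_{j,j+2}$; no contradiction arises. The failure of hypothesis (ii) in proposition~\ref{hypertanprop} for $w=s_{j+1}$, $Z=E_{j+1,j+2}$ is exactly the obstruction, and it is not removed by these enveloping-algebra relations alone.

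\textbf{Step 3 (degree-zero case).} The argument is circular. The distribution $R_{j,m,k}\tau$ is already a global element of $V^{-\infty}_{\l,\d}$; uniqueness of the canonical extension of $R_{j,m,k}\tau|_{X_0}$ does not say $R_{j,m,k}\tau$ \emph{equals} that extension---that is precisely what must be proved. A nonzero $s$ supported at $\infty$ in the $\Phi_{j+1}$-fiber is perfectly consistent with everything established so far.

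The paper's route around these obstacles is a second, \emph{downward} induction on $i$, $0\le i\le j$, along the nested submanifolds $C^{i+1}_{j+1}$ of \lemref{inforderlem}. A hypothetical defect in $R_{j,m,k}\tau$ along $C_{j+1}$ is transferred successively---via the Fourier relations (\ref{Pjkmtau2})--(\ref{Pjkmtau4}), the flip identities of \lemref{RSlem3}, and $N(\Z)$-invariance---down to a defect in $R_{0,1,k}\tau$ along $C^1_{j+1}$. There the Fourier components with respect to $h_1(t)$ are genuine Whittaker distributions, for which the hypotheses of proposition~\ref{hypertanprop} \emph{do} hold, and the contradiction follows. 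That downward transfer is the missing idea.
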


\begin{proof}
We argue by induction on $\,j\,$. Lemma \ref{inforderlem3} already contains this assertion for $\,j = 0\,$. Let us
suppose then that $\,j \geq 1\,$, and that the assertion is correct at the previous step. By induction and lemma
\ref{inforderlem1},
\begin{equation}
\label{ind1} R_{j-1,m,k}\tau\circ \Phi_j\ \ \ \text{vanishes to infinite order at $\,\infty\,$} \,,
\end{equation}
uniformly in $\,m\,$ and $\,k\,$. Both for $\,j=1\,$ and $\,j>1\,$, lemma \ref{RSlem3} relates the behavior of the
$\,R_{j-1,m,k}\tau\,$ at $\,C_j \cong \{x_j=\infty\}\,$ to that of the $\,\ell(h_j(k_{j+1}/m))S_{j,m,k}\tau\,$ at
$\,\{x_j=0\}\,$, though with different choices of $\,m\,$ and $\,k\,$ on the two sides; these choices can be
bounded by polynomials in $\,m\,$ and $\,\|k\|\,$,\,\ so uniformity in $\,m\,$ and $\,k\,$ is not affected.
Composing the identities with $\,\Phi_j\,$, we conclude that
\begin{equation}
\label{ind2} \ell\big(h_j({\textstyle\frac{k_{j+1}}{m}})\big)S_{j,m,k}\tau\circ \Phi_j\ \ \ \text{vanishes to infinite
order at the origin},
\end{equation}
uniformly in $\,m\,$ and $\,k\,$. Hence by (\ref{rhosigma}),
\begin{equation}
\label{ind3} \sigma_{j,m,k}\ \ \ \text{vanishes to infinite order at the origin},
\end{equation}
again uniformly in $\,m\,$ and $\,k\,$. Combining this with proposition \ref{sigmarhoftprop}, with (\ref{inf2}),
and with the relationship (\ref{rhosigma}) between the $\,\rho_{j,m,k}\,$ and the $\,R_{j,m,k}\tau\circ
\Phi_{j+1}\,$, we find that every $\,R_{j,m,k}\tau\circ \Phi_{j+1}\big|_\R\,$ has a canonical extension across
$\,\infty\,$. That is also true in the uniform sense, since (\ref{inf2}) preserves uniformity; see \cite[lemma
7.15]{inforder}. Hence, by \lemref{inforderlem1},
\begin{equation}
\label{ind4} R_{j,m,k}\tau\ \ \ \text{has a canonical extension across $\,C_{j+1}\,$} \,,
\end{equation}
uniformly in $\,m\,$ and $\,k\,$. It remains to be shown that all the $\,R_{j,m,k}\tau\,$ vanish to infinite order
along $\,C_{j+1}\,$. We argue by contradiction, and suppose
\begin{equation}
\label{ind5}R_{j,m,k}\tau \ \ \text{does not vanish to infinite order along $\,C_{j+1}\,$} \,,
\end{equation}
for at least one choice of $\,m\,$ and $\,k\,$. We must derive a contradiction from (\ref{ind4}) and (\ref{ind5}).

Recall the definition of the locally closed submanifolds $\,C^i_j\,$ in \lemref{inforderlem}. By downward induction
on $\,i\,$, for $\,0\le i \le j\,$, we shall show that
\begin{equation}
\begin{aligned}
\label{ind6}
&\text{the}\, \ R_{i,m,k}\tau\, \ \text{have canonical extensions across $\,C^{i+1}_{j+1}$, uniformly in $\,m,\,k\,$},
\\
&\text{but for some $\,m,\,k\,,$}\, \ R_{i,m,k}\tau\, \ \text{does not vanish to infinite order on $\,C^{i+1}_{j+1}\,$}.
\end{aligned}
\end{equation}
For $\,i=j\,$, this is the case by (\rangeref{ind4}{ind5}). Suppose than that $\,0 \le i \le j-1\,$ and that
(\ref{ind6}) is satisfied for all larger values of $\,i\,$. The actions of the one parameter groups
$\,h_{i+1}(t)\,$, $\,h_{i+2}(t)\,$ on $\,X\,$ preserve $\,C^{i+2}_{j+1}\,$, and drop to actions of the compact
group $\,\R/\Z\,$ on $\,N(\Z)$-invariant objects, such as $\,P_{i+1,m,k}\tau\,$. According to
(\rangeref{Pjkmtau2}{Pjkmtau3}), the $\,R_{i+1,m,k}\tau\,$ and $\,S_{i+1,m,k}\tau\,$ are the Fourier coefficients
of $\,P_{i+1,m,k}\tau\,$ with respect to the two actions. Thus, by \cite[lemma 7.2]{inforder}, the fact that the
$\,R_{i+1,m,k}\tau\,$ have canonical extensions, uniformly in $\,m\,$ and $\,k\,$, implies the corresponding
assertion about the $\,P_{i+1,m,k}\tau\,$, and then \cite[Proposition 7.20]{inforder} allows us to draw the
analogous conclusion about the $\,S_{i+1,m,k}\tau\,$. Differences between these distributions and their canonical
extensions inherit the invariance properties of their ``parents". Since the various components of a Fourier
expansion are necessarily linearly independent, if some $\,R_{i+1,m,k}\tau\,$ fails to vanish to infinite order,
that must also be the case for some $\,S_{i+1,m,k}\tau\,$. We conclude:
\begin{equation}
\begin{aligned}
\label{ind7}
&\!\!\text{the} \ S_{i+1,m,k}\tau\, \ \text{have canonical extensions across $C^{i+2}_{j+1}$, uniformly in $\,m,\,k\,$},
\\
&\!\!\text{but for some $\,m,\,k\,,$}\, \ S_{i+1,m,k}\tau\, \ \text{does not vanish to infinite order on $\,C^{i+2}_{j+1}\,$}.
\end{aligned}
\end{equation}
Translation by elements of $\,N(\R)\,$ -- or, for that matter, by diagonal matrices -- does not affect vanishing to
infinite order along Schubert cells. Thus (\ref{ind7}) remains correct for the renormalized quantities
$\,\ell(h_{i+1}(k_{i+2}/m))S_{i+1,m,k}\tau\,$. In case $\,i > 0\,$, lemma \ref{RSlem3} relates these renorma\-lized
quantities to the $\,\ell(h_{i+1}(-k_{i}/m))R_{i,m,k}\tau\,$,\,\ though with different choices of $\,m\,$ and
$\,k\,$,\,\ via translation by
\begin{equation}
\label{ind8} \Phi_{i+1} \ttwo{0}{-m_{i+2}/m}{m/m_{i+2}}{0}\ \ = \ \ \Phi_{i+1} \ttwo{0}{1}{-1}{0}\Phi_{i+1}
\ttwo{-m/m_{i+2}}{0}{0}{-m_{i+2}/m} \,.
\end{equation}
The preceding statement remains correct even for $\,i=0\,$, provided we replace
$\,\ell(h_{i+1}(-k_{i}/m))R_{i,m,k}\tau\,$ by $\,\ell(h_{1}(-\bar k_2m_2/m))R_{0,m,k}\tau\,$; to see this, note
that
\begin{equation}
\label{ind9} {\ttwo{0}{m_{2}/m}{-m/m_{2}}{\bar k_2}}^{-1}\ \ = \ \ {\ttwo{1}{-\bar
k_{2}m_2/m}{0}{1}}^{-1}\ttwo{0}{-m_{2}/m}{m/m_{2}}{0} \,.
\end{equation}
The first matrix on the right side of (\ref{ind8}) is $\,s_{i+1}\,$, as defined in (\ref{flag3.5}), and the second
lies in the diagonal subgroup. We had just remarked that translation by elements of $\,N(\R)\,$ or the diagonal
subgroup does not affect vanishing to infinite order along Schubert cells. The uniformity is not affected by the
different choices of $\,m\,$ and $\,k\,$ on the two sides, as was remarked earlier, nor by translation by a
diagonal matrix with entries that are bounded by multiples of $\,\|k\|\,$, and translation by
$\,h_{i+1}(k_{i+2}/m)\,$ or $\,h_{i+1}(-k_{i}/m)\,$. This establishes (\ref{ind6}), though so far only with
$\,s_{i+1}C^{i+2}_{j+1}\,$ in place of $\,C^{i+1}_{j+1}\,$.

If $\,i>0\,$, $\,s_{i+1}C^{i+2}_{j+1}\,$ is invariant under the one parameter group $\,h_i(t)\,$ -- not invariant
under $\,h_{i+1}(t)\,$,\,\ however -- so for $\,i>0\,$ we can argue as we did in the case of the
$\,R_{i+1,m,k}\tau\,$ and conclude that the $\,P_{i,m,k}\tau\,$ have canonical extensions across
$\,s_{i+1}C^{i+2}_{j+1}$, uniformly in $\,m\,$ and $\,k\,$ as always. But each $\,P_{i,m,k}\tau\,$ is
$\,N(\Z)$-invariant, and therefore has a canonical extension across all $\,N(\Z)$-translates of
$\,s_{i+1}C^{i+2}_{j+1}\,$.\,\ According to \lemref{inforderlem}, the $\,N(\R)$-translates constitute a Zariski
open cover of $\,C^{i+1}_{j+1}\,$. We claim that even the $\,N(\Z)$-translates cover $\,C^{i+1}_{j+1}\,$. If that
were not true, some non-empty Zariski closed subset of $\,s_{i+1}C^{i+2}_{j+1}\,$ would have to be
$\,N(\Z)$-invariant and, in view of the Zariski density of $\,N(\Z)\,$ in $\,N(\R)\,$, even $\,N(\R)$-invariant.
But then the $\,N(\R)$-translates of $\,s_{i+1}C^{i+2}_{j+1}\,$ could not cover $\,C^{i+1}_{j+1}\,$~--
contradiction! Having a canonical extension, whether in a uniform sense or not, is a local property. The
$\,P_{i,m,k}\tau\,$ therefore have canonical extensions across $\,C^{i+1}_{j+1}=\cup_{n\in N(\Z)}\,
n\,s_{i+1}C^{i+2}_{j+1}\,$, uniformly in $\,m\,$ and $\,k\,$. We now can conclude (\ref{ind6}) by taking Fourier
components with respect to the action of $\,h_i(t)\,$. If $\,i=0\,$ we need to modify the argument slightly. The
$\,R_{0,1,k}\tau\,$ themselves are $\,N(\Z)$-invariant, so we can argue as before and conclude that the
$\,R_{0,1,k}\tau\,$ have canonical extensions across not only $\,s_{1}C^{2}_{j+1}\,$, but all of $\,C^1_{j+1}\,$,
uniformly in $\,k\,$. That completes the verification of (\ref{ind6}) for all $\,i\,$, $\,1\leq i \leq j\,$.

We now apply (\ref{ind6}) with $\,i=0\,$:\,\ there exists some $\,k\,$ such that $\,R_{0,1,k}\tau\,$ has a
canonical extension across $\,C^1_{j+1}\,$ but does not vanish there to infinite order. The same must then be true
for at least one of the Fourier coefficients of $\,R_{0,1,k}\tau\,$ with respect to the action of $\,\R/\Z\,$ via
$\,h_1(t)\,$. In view of (\ref{cuspi0}) the non-zero Fourier coefficients are Whittaker distributions, i.e.,
$\,N(\Z)$-equivariant extensions of characters
\begin{equation}
\label{ind12}
N(\R) \, \ni \, n \ \ \mapsto \ e(k\,x)\ \ \text{if $\,n=n_{x,y}n''\,$ with $\,n''\in N''(\R)\,$} \ \ \ (\,k \in \Z^{n-1}_{\neq 0}\,)
\end{equation}
to distribution vectors in $\,C^{-\infty}(X,\mathcal L_{\l,\d})\,$.\,\ It is known, of course, that these extend
uniquely in an $\,N(\Z)$-equivariant manner \cite{chm}, as follows readily also from proposition
\ref{hypertanprop}. We have arrived at the required contradiction to (\rangeref{ind4}{ind5}).
\end{proof}

Together, \lemref{inforderlem1} and \lemref{inforderlem4} imply that all the $\,R_{j,m,k}\tau \circ \Phi_{j+1}\,$
vanish to infinite order at $\,\infty\,$. Proposition \ref{fliprelationsprop} follows, as was pointed out earlier.

\section{Classical proof of the formula}\label{sec:firstproof}

We begin by recalling some analytic ingredients from our paper \cite{inforder} that were used in \cite{voronoi}, in
particular some results from \cite[\S 6]{inforder} that apply directly to our context as well.  Let
$\Sch_{\text{\rm{sis}}}(\R)$ denote the space of all finite linear combinations of the products $(\sgn x)^\eta
|x|^{\a}(\log|x|)^j \phi(x)$, where $\eta\in \Z/2\Z$, $\a\in \C$, $j\in \Z_{\ge 0}$, and $\phi$ is an element of
the Schwartz space $\Sch(\R)$. By hypothesis, the test function $f$ in \thmref{mainthm} is an element of this
space, as its transform $F$ is also asserted to be by (\ref{MfMF1}).

Propositions~\ref{sigmarhoftprop} and \ref{fliprelationsprop} display relations involving Fourier transforms and
$x\mapsto x\i$ amongst some of the Fourier components of the automorphic distribution. These relations, chained
together, ultimately lead to a distributional identity which is equivalent to the statement of the summation
formula in \thmref{mainthm}.  For this it is convenient to use the operators
\begin{equation}\label{Tad}
    T_{\a,\eta} \ \ = \ \  {\mathcal F}\(x\mapsto \phi(x\i)\sgn(x)^\eta |x|^{-\a-1}\), \ \  \a\in\C \text{~and~} \eta\in\Z/2\Z \,,
\end{equation}
 which are the subject of \cite[\S6]{inforder}. This Fourier integral converges uniformly for $\Re{\a}$ sufficiently large when $\phi\in\Sch(\R)$, and
 extends to an operator which maps $\Sch_{\text{\rm{sis}}}(\R)$ to itself by \cite[Theorem 6.6]{inforder}.  Its adjoint operator is 
  given by
\begin{equation}\label{Tadjoint}
    T_{\a,\eta}^*\sigma(x) \ \ = \ \ \sgn(x)^\eta\,|x|^{\a-1}\,\widehat{\sigma}(\smallf{1}{x}) \,,
\end{equation}
which is shown there to extend to and preserve the space of tempered distributions  vanishing
to infinite order at the origin. We also require the slightly more general operators
\begin{equation}\label{T1star}
\aligned & {\mathcal T}_{j,a,b} \ \  = \ \ {\mathcal F}\(x\mapsto f(\smallf{b}{ax})
\sgn(bx)^{\d_j+\d_{j+1}}|a|^{-1}|b|^{\l_j-\l_{j+1}}|x|^{-\l_j+\l_{j+1}-1} \)\,,
\\
  & \qquad {\mathcal T}^*_{j,a,b}\sigma(x) \ \  = \ \ \sgn(a x)^{\d_j+\d_{j+1}} \, | a x|^{\l_j-\l_{j+1}-1}\, \widehat{\sigma}(\smallf{b}{ ax}) \\
  &  \qquad \qquad  \qquad \ \ \, = \ \ \mu_j(ax)\,\widehat{\sigma}(\smallf{b}{ax})
\endaligned
\end{equation}
for $j=1,\ldots,n-1$, where $\mu_j$ is defined in (\ref{mudef}) and both $a$ and $b$ are nonzero.  Their
analytic properties follow from those of $T_{\a,\eta}$ and its adjoint by simple rescalings.  In
particular, ${\mathcal T}_{j,a,b}$ maps $\Sch_{\text{\rm{sis}}}(\R)$ to itself, and ${\mathcal T}^*_{j,a,b}$
preserves  the space of tempered distributions which vanish to infinite order at the origin. Using the relation
\begin{equation}\label{mellandfour}
    M_\eta\widehat{\phi}(s) \ \ = \ \ (-1)^\eta \, G_\eta(s)\,M_\eta \phi(1-s)
\end{equation}
for an arbitrary element $\phi\in\Sch(\R)$ (\cite[(4.58)]{inforder}), the proof of \cite[Lemma 6.19]{inforder}
 can be modified easily  to show that
\begin{equation}\label{mellwithtjab1}
\aligned
&M_{\eta}({\mathcal T}_{j,a,b}\phi)(s) \ \ =  \ \ (-1)^\eta\, \sgn(a)^{\eta+\d_j+\d_{j+1}} \, |a|^{s+\l_j-\l_{j+1}-1} \  \times \\
& \qquad  \qquad  \qquad    \ \ \ \ \  \times \, \sgn(b)^{\eta}\, |b|^{-s} \, G_\eta(s) \,
(M_{\eta+\d_j+\d_{j+1}}\phi)(s+\l_j-\l_{j+1})\,,
\endaligned
\end{equation}
and consequently
\begin{equation}\label{mellwithtjab2}
\aligned
    & M_\eta({\mathcal T}_{1,a_1,b_1} {\mathcal T}_{2,a_2,b_2}\cdots
    {\mathcal T}_{n-1,a_{n-1},b_{n-1}}\phi)(s) \ \ =
    \ \ (-1)^{(n-1)\eta+(n-1)\d_1 +\d_n} \, \times \\
& \qquad   \  \times  \, \(\prod_{j=1}^{n-1} \sgn(a_j)^{\eta+\d_1+\d_{j+1}}|a_j|^{s+\l_1-\l_{j+1}-1}
G_{\eta+\d_1+\d_j}(s+\l_1-\l_j)\) \times \\
& \qquad \  \times \, \(\prod_{j=1}^{n-1} \sgn(b_j)^{\eta+\d_1+\d_j}|b|^{-s-\l_1+\l_j}\) \,
M_{\eta+\d_1+\d_n}\phi(s+\l_1-\l_n)\,.
\endaligned
\end{equation}

To complement the distributions $\sigma_{j,m,k}$ and $\rho_{j,m,k}$ from the previous section, we now introduce
some auxiliary distributions related to them :
\begin{equation}\label{taur}
   \tau_{R}(t) \ \ = \ \  \sum_{r\,\neq\, 0}\,c_{c_{n-2},\cdots,c_1,r}\,e(r(t-\smallf{a}{q}))
\end{equation}
depends   on the parameters $c_1,\cdots,c_{n-2}$ and $\f aq$, which are fixed in the statement of
\thmref{mainthm}; however, the analogous distributions
\begin{equation}\label{Deltal}
    \D_{L;k_2,\ldots,k_{n-1},\theta}  (t)   \ \ = \ \ \ \sum_{r\,\neq\, 0}\, c_{r,k_2,\ldots,k_{n-1}} \,e(r\,\theta)\,\d_{r}(t)
\end{equation}
and
\begin{equation}\label{taul}
    \tau_{L;k_2,\ldots,k_{n-1},\theta}(t) \ \ = \ \     \widehat{\D}_{L;k_2,\ldots,k_{n-1},\theta}(t) \ \ = \ \ \sum_{r\,\neq\, 0} \,c_{r,k_2,\ldots,k_{n-1}} \,e(r(\theta-t))
\end{equation}
do depend on different but similar parameters, which are therefore indicated in the notation.  The first formula in
proposition \ref{fliprelationsprop} can be restated in terms of this notation as
\begin{equation}\label{s1fliprestated}
    \sigma_{1,m,k} \ \ = \ \      {\mathcal T}^*_{1,\smallf{m}{m_2},\smallf{m_2}{m}} \ \D_{L;m_2,k_3,\ldots,k_{n-1},\f{\overline{k_2}}{m/m_2}}\,.
\end{equation}
Likewise, the second formula in proposition \ref{fliprelationsprop} relates $\tau_R$ to $\sigma_{n-2}$ by
\begin{equation}\label{rhonfliprestated}
    \tau_R \ \ = \ \ \sum_{\ell \imod {c_1q}}e(\smallf{\ell \, \bar{a}}{q}) \ {\mathcal T}^*_{n-1,q,c_1}\, \sigma_{n-2,qc_1,
    (c_{n-2},\ldots,c_2,0,\ell)}\,,
\end{equation}
as can be seen by relating it to $\rho_{n-2,c_1q,(c_{n-2},\ldots,c_2,c_1\bar{a},0)}$ and applying
proposition~\ref{sigmarhoftprop}.  In this last formula as well as elsewhere in this section, we set the indices
$k_j$ of $\sigma_{j,m,k}$ and $k_{j+1}$ of $\rho_{j,m,k}$ to zero, as we may. Finally, the third formula in
proposition \ref{fliprelationsprop} can also be restated in a way similar to (\ref{s1fliprestated}):
\begin{equation}\label{rhosigfliprestated}
\aligned
  &   \sigma_{j,m,k} \ \ = \ \  \sum_{\ell\,\imod {\f{mk_{j-1}}{m_{j+1}}}} e(\smallf{\ell\, \overline{k_{j+1}}\,m_{j+1}}{m}) \  \times \\ &
   \qquad \qquad  \qquad \
  \times \ {\mathcal T}^*_{j,\f{m}{m_{j+1}},k_{j-1}} \, \sigma_{j-1,\f{mk_{j-1}}{m_{j+1}},(k_1,\ldots,k_{j-2},0,\ell,m_{j+1},
    k_{j+2},\ldots,k_{n-1})}\,.
\endaligned
\end{equation}
Here we use that the $\sigma_{j,m,k}$ all vanish to infinite order at the origin; see
proposition~\ref{fliprelationsprop}.  The distributions $\D_{L;\cdots}$ obey a stronger property:~ they vanish
identically on the interval $(-1,1)$.
 Chaining together formulas
(\ref{s1fliprestated}-\ref{rhosigfliprestated}) now allows us to calculate $\tau_R$ in terms of the action of the
${\mathcal T}^*_{j,a,b}$ on (\ref{Deltal}).  We next parametrize $\ell$ in (\ref{rhonfliprestated}) as $d_1\ell_1$,
where $d_1$ is ranges over divisors of $c_1q$ and $\ell_1$ ranges over $(\Z/\f{qc_1 }{d_1}\Z)^*$, so that
\begin{equation}\label{chain1}
    \tau_R \ \ = \  \  \sum_{d_1|qc_1}\,\sum_{\ell_1\in (\Z/\f{qc_1}{d_1}\Z)^*}e(\smallf{d_1\ell_1\bar{a}}{q}) \  {\mathcal T}^*_{n-1,q,c_1} \, \sigma_{n-2,qc_1,(c_{n-2},\ldots,c_2,0,d_1\ell_1)}.
\end{equation}
Now consider equation (\ref{rhosigfliprestated}) with $j=n-2$. With this parametrization of $\ell$, the quantity
$m_{n-1}$ -- the GCD of $qc_1$ and $d_1\ell_1$ -- is equal to $d_1$, and $\overline{k_{n-1}}$ can be taken to be
$\overline{\ell_1}$, the modular inverse of $\ell_1$ in $(\Z/\f{qc_1}{d_1}\Z)^*$.  Using (\ref{rhosigfliprestated})
for $2 \le h \le n-2$, with $d_{h-1}|\smallf{qc_1\cdots c_{h-1}}{d_1\cdots d_{h-2}}$ and
$\ell_{h-1}\in(\Z/\smallf{qc_1\cdots c_{h-1}}{d_1\cdots d_{h-1}}\Z)^*$, we obtain successive relations
\begin{equation}\label{chain2}
\aligned
    &\sigma_{n-h,\f{qc_1\cdots c_{h-1}}{d_1\cdots d_{h-2}},(c_{n-2},\ldots,c_h,0,d_{h-1}\ell_{h-1},d_{h-2},\ldots,d_1)}
    \ \ = \\
    & \ \ \ \
\sum_{d_h|\f{qc_1\cdots c_h }{d_1\cdots d_{h-1}}}\  \sum_{\ell_h \in (\Z/\f{qc_1\cdots c_h}{d_1\cdots d_h}\Z)^*}
e\(\f{~d_h\ell_h\overline{\ell_{h-1}}~}{\smallf{qc_1\cdots c_{h-1}}{d_1\cdots d_{h-1}}}\) \, \times
\\
& \qquad\ \  \  \times \, {\mathcal T}^*_{n-h,\f{qc_1\cdots c_{h-1}}{d_1\cdots d_{h-1}},c_h}\sigma_{n-h-1,
\f{qc_1\cdots c_h}{d_1\cdots d_{h-1}},(c_{n-2},\ldots,c_{h+1},0,d_h\ell_h,d_{h-1},\ldots,d_1)}\,.
\endaligned
\end{equation}
When $h=2$ the parameters on the left hand side match those on the right hand side of (\ref{chain1}).  Thus
$\tau_R$ equals both of the following expressions:
\begin{equation}\label{chain3}
\aligned
&   \sum_{r\neq 0}c_{c_{n-2},\cdots,c_1,r}\,e(-r\smallf{a}{q})\,e(rt) \ \ = \\
& \underbrace{\sum_{d_h|\f{qc_1\cdots c_h}{d_1\cdots d_{h-1}}}\ \sum_{\ell_h\in(\Z/\f{qc_1\cdots c_h}{d_1\cdots
d_h}\Z)^*}}_{\text{for all~} h \,\le\, n-2}
    e\(\f{d_1\ell_1\bar{a}}{q} + \sum_{h=2}^{n-2} \f{~d_h \ell_h \overline{\ell_{h-1}}~}{\smallf{qc_1\cdots c_{h-1}}{d_1\cdots d_{h-1}}}
     \) \times \\
     & \ \times \,
    {\mathcal T}^*_{n-1,q,c_1}
    {\mathcal T}^*_{n-2,\f{qc_1}{d_1},c_2}\cdots
    {\mathcal T}^*_{2,\f{qc_1\cdots c_{n-3}}{d_1\cdots d_{n-3}},
c_{n-2}}
     \sigma_{1,\f{qc_1\cdots c_{n-2}}{d_1\cdots d_{n-3}},(0,d_{n-2}\ell_{n-2},d_{n-3},\ldots,d_1)}\,,
    \endgathered
\end{equation}
in which both sums involve all $d_h$ and $\ell_h$ for $1\le h \le n-2$. By (\ref{s1fliprestated}) the $\sigma_1$
term in the last line of this formula is
\begin{equation}\label{chain4}
    {\mathcal T}^*_{1,\f{qc_1\cdots c_{n-2}}{d_1\cdots d_{n-2}},\f{d_1\cdots d_{n-2}}{qc_1\cdots c_{n-2}}}\
    \D_{L;d_{n-2},d_{n-3},\ldots,d_1,
    \f{\overline{\ell_{n-2}}}{~\f{qc_1\cdots c_{n-2}}{d_1\cdots d_{n-2}}~}}.
\end{equation}
Thus (\ref{chain3}) remains equal after the following modifications are performed:~a sum over $r\in \Z$ is
inserted; $\f{\overline{\ell_{n-2}}r}{~\f{qc_1\ldots c_{n-2}}{d_1\cdots d_{n-2}}~}$ is added to the argument of the
exponential; $ {\mathcal T}^*_{1,\f{qc_1\cdots c_{n-2}}{d_1\cdots d_{n-2}},\f{d_1\cdots d_{n-2}}{qc_1\cdots
c_{n-2}}}$ is added to the end of the chain of ${\mathcal T}^*$ operators; and the $\sigma_1$ term is replaced by
 $c_{r,d_{n-2},d_{n-3},\ldots,d_1}\d_r(t)$.

We will now show that equation (\ref{chain3}) is the distributional equivalent of the summation formula in
\thmref{mainthm}, by integrating both sides against the test function ${\mathcal N}\cdot g_1(t)$, where ${\mathcal
N}$ is the normalizing factor
\begin{equation}\label{Nfactor}
 {\mathcal N} \ \ = \ \    \prod_{j\,\le\,n-2}\sgn(c_{n-1-j})^{\d_1+\cdots \d_j}|c_{n-1-j}|^{\l_1+\cdots + \l_{j}}
\end{equation}
and $g_1={\mathcal F}(f(x)|x|^{-\l_n}\sgn(x)^{\d_n})$, in terms of the function $f$ in \thmref{mainthm}. In
particular,
    \begin{equation}\label{mellg1f}
    M_\d g_1(s) \ \ = \ \ (-1)^\d\,G_\d(s)\,(M_{\d+\d_n}f)(1-s-\l_n)\,,
\end{equation}
because of (\ref{mellandfour}). By our hypothesis that $f\in |x|^{\l_n}\sgn(x)^{\d_n}\Sch(\R)$, $g_1$ is an
arbitrary Schwartz function, and so may be integrated against the periodic -- and hence tempered -- distribution on
the left hand side of (\ref{chain3}). Taking into account the normalization (\ref{ctoa}), this gives precisely the
left hand side of the formula in \thmref{mainthm}.

Now recall the description of the right hand side of (\ref{chain3}) given after (\ref{chain4}). The variables
$\ell_h$ occur only in the argument of the exponential, and yield exactly the hyperkloosterman sum
$S(r,\bar{a};q,c,d)$.  The integration of the right hand side of (\ref{chain3})  equals  
\begin{equation}\label{rhschain}
  {\mathcal N}\,\cdot\sum_{\srel{d_h|\f{qc_1\cdots c_h}{d_1\cdots d_{h-1}}}{\text{for all}\,h\le n-2}}\sum_{r\,\neq\,0} \,S(r,\bar{a};\,q,c,d) \ c_{ r,d_{n-2},d_{n-3},\ldots,d_1}\,
    g_2(r)\,,
\end{equation}
 where
\begin{equation}\label{g2fromTs}
\gathered
    g_2 \  = \  {\mathcal T}_{1,\f{qc_1\cdots c_{n-2}}{d_1\cdots d_{n-2}},\f{d_1\cdots d_{n-2}}{qc_1\cdots c_{n-2}}}
   {\mathcal T}_{2,\f{qc_1\cdots c_{n-3}}{d_1\cdots d_{n-3}},
c_{n-2}} \cdots
    {\mathcal T}_{n-2,\f{qc_1}{d_1},c_2}
          {\mathcal T}_{n-1,q,c_1}\,g_1.
\endgathered
    \end{equation}
Introducing the quantity $c_{n-1}=1$ for convenience, one can use (\ref{mellwithtjab2})  to express the Mellin
transform of $g_2$ as
\begin{equation}\label{mellg2g1}
\aligned
  &  M_\d g_2(s) \ \ = \ \ (-1)^{(n-1)\d+(n-1)\d_1+\d_n}\,(M_{\d+\d_1+\d_n}g_1)(s+\l_1-\l_n)  \ \times
\\
& \times \ \sgn(\smallf{qc_1\cdots c_{n-2}}{d_1\cdots d_{n-2}})^{\d} \left|\smallf{qc_1\cdots c_{n-2}}{d_1\cdots
d_{n-2}}\right|^{s} \( \prod_{j=1}^{n-1}\sgn(c_j)^{\d+\d_1+\d_{n-j}} |c_j|^{-s-\l_1+\l_{n-j}}\ \times \right.
\\
&\left.\times \  \sgn(\smallf{qc_1\cdots c_{j-1}}{d_1\cdots d_{j-1}})^{\d+\d_1+\d_{n-j+1}}\left|\smallf{qc_1\cdots
c_{j-1}}{d_1\cdots d_{j-1}}\right|^{s+\l_1-\l_{n-j+1}-1}
    G_{\d+\d_1+\d_j}(s+\l_1-\l_j)\).\!\!\!\!\!\!\!\!\!\!\!\!\!\!\!\!\!\!\!\!\!\!\!\!\!\!\!\!\!\!
\endaligned
\end{equation}
Letting
  \begin{equation}\label{calNtilde}
    \widetilde{\mathcal N} \ \ =  \ \ \prod_{j=1}^{n-2} \, (\sgn d_{n-1-j})^{\d_1+\cdots+\d_{j+1}}\,|d_{n-1-j}|^{\l_1+\cdots+\l_{j+1}}\,,
\end{equation}
the equality of (\ref{rhschain}) with the right hand side of the formula in \thmref{mainthm} reduces to the
identity
\begin{equation}\label{reduce1}
    {\mathcal N}\cdot \,g_2( r) \ \ = \ \  \sgn(r)^{\d_1} \, |r|^{\l_1} \,
     \widetilde{\mathcal N} \,
    \left|\f{q}{rd_1\cdots d_{n-2}}\right| \, F\!\(\textstyle\f{r \, d_{n-2}^2\, d_{n-3}^3\cdots d_1^{n-1}}{q^n\, c_{n-2}\, c_{n-3}^2\cdots c_1^{n-2}}\),
\end{equation}
or the following equivalent relation between Mellin transforms:
\begin{equation}\label{reduce2}
\aligned
  & {\mathcal N}  \cdot  \f{|d_1\cdots d_{n-2}|}{  |q|}  \,  M_\d g_2(s) \ \ =  \ \ \widetilde{\mathcal N}\cdot \sgn(\smallf{ d_{n-2}^2\, d_{n-3}^3\cdots d_1^{n-1}}{q^n\, c_{n-2}\, c_{n-3}^2\cdots c_1^{n-2}})^{\d+\d_1} \ \times \\
  & \qquad\qquad\qquad \times \left|\smallf{ d_{n-2}^2\, d_{n-3}^3\cdots d_1^{n-1}}{q^n\, c_{n-2}\, c_{n-3}^2\cdots c_1^{n-2}} \right|^{1-s-\l_1}
    M_{\d+\d_1}F(s+\l_1-1) \,.  
\endaligned
\end{equation}
After substituting (\ref{mellg1f}) into (\ref{mellg2g1}), and then into the left hand side of the previous
equation, while substituting   (\ref{MfMF}) into its right hand side, both sides have identical occurrences of
$M_{\d+\d_1}f(1-s-\l_1)$ and the product $\prod_{j=1}^{n}G_{\d+\d_1+\d_j}(s+\l_1-\l_j)$. A short computation using
(\ref{sumtozero}) then verifies that the remaining terms --  powers of $(-1)$, $|q|$, $\sgn(q)$, $|c_j|$,
$\sgn(c_j)$, $|d_j|$, and $\sgn(d_j)$ -- on both sides agree.   That completes the proof of theorem
\ref{mainthm}. 

\section{Adelic proof of the formula}\label{sec:secondproof}

In this section, we give a second, self-contained derivation of the Voronoi formula using  adelic automorphic
distributions. Since it is a second proof, we will describe only the formal aspects of the calculation;
the rigorous justification can be handled using the techniques of the previous sections. 

To begin, we will describe the adelization of the classical automorphic distributions from \secref{sec:background},
considering a $GL(n,\Z)$-invariant automorphic distribution that comes from a cuspidal automorphic representation
of $GL(n)$ over $\A$, the adele group of $\Q$.  This process is formally identical to the usual adelization of
classical automorphic functions using strong approximation, though we shall present it via
 Fourier expansions
because of the  key role they play later.

Let us first review Whittaker functions for automorphic representations. We use $\psi=\prod \psi_p$ to denote the
standard additive character on $\Q\backslash \A$, whose restriction to $\R$ coincides with $e(\cdot)$.  It can be
used to form the standard character $\psi_N=\prod \psi_{N,p}$ of $N(\Q)\backslash N(\A)$, by composing $\psi$ with
the sum of the entries just above the diagonal. A famous formula of Piatetski-Shapiro and Shalika shows that the
smooth vectors can be represented as sums of left-translates of adelic Whittaker functions $W=\prod W_p$, each of
which transforms on the left under $N(\Q_p)$ by the character $\psi_{N,p}$. By convention, the Fourier coefficient
$a_k$ of such a vector is
 the renormalized
value of the finite part of the adelic Whittaker function on the matrix  $\D_k=\operatorname{diag}(k_1\cdots
k_{n-1}, k_2\cdots k_{n-1},\ldots,k_{n-1},1)\in GL(n,\Q)$:
\begin{equation}\label{adelicwhitfunctionfinite}
    W_f(\D_k) \ \ = \ \
  \prod_{p<\infty}W_p(\D_k)
    \ \ = \ \ \f{a_{k_1,\ldots,k_{n-1}}}{\prod_{j\,=\,1}^{n-1}|k_j|^{j(n-j)/2} }\,.
\end{equation}

We shall define adelic automorphic distributions by replacing $W=W_\infty W_f$ with a ``boundary Whittaker
distribution'' $B=B_\infty W_f$ according to the following procedure.
  The distribution $
B_\infty\in V_{\l,\d}^{-\infty}$, like the Whittaker function $W_\infty$ it shall replace, will also transform
 on the left under $N(\R)$ according to this character.  Up to scaling, it must
 therefore be equal to this character on $N(\R)$, and is completely described as
  such on the open Schubert cell.  We define $B_\infty$ to be its
  unique extension to
  an $N(\R)$-equivariant distribution in $V_{\l,\d}^{-\infty}$,
   which was proven to exist
  in \cite{chm}.   The motivation for this definition is as follows.  Consider the
relation (\ref{ctoa}) between the Fourier coefficients of an automorphic form associated to
 $\tau$, and the Fourier coefficients of (\ref{tauabelexp}).  The latter were
 just interpreted in terms of $W_f(\D_k)$.
The product in (\ref{ctoa}) is the reciprocal
 of the {\em unnormalized} inducing character  (i.e.~without including $\rho$)
  from (\ref{autod}) on the diagonal matrix $\D_k$; the presence of $\rho$ accounts
  for the product in (\ref{adelicwhitfunctionfinite}).  Hence $B(\D_k)=c_{k_1,\ldots,k_{n-1}}$.

  The adelic
   automorphic distribution $\tau_\A$, in analogy to
  the Piatetski-Shapiro-Shalika Fourier expansion
  of cusp forms in terms of Whittaker functions, is defined as
 the sum of $B\(\ttwo{\g}{}{}1 g\)$, where $\g$
 runs over all cosets of $N^{(n-1)}(\Q)\backslash GL(n~-~1,\Q)$,
 $N^{(n-1)}$ being the subgroup
 of unit upper triangular matrices in $GL(n-1)$.
 When $\tau$ is restricted to the factor $GL(n,\R)\hookrightarrow GL(n,\A)$,
 it corresponds to the automorphic distribution from \secref{sec:background} that
 embeds elements of $V_{-\l,\d}^\infty$, the smooth vectors in the dual principal series
 representation, to smooth vectors of the automorphic representation.
Our assumption of $GL(n,\Z)$ invariance forces the $p$-adic Whittaker functions $W_p$, for $p$ finite, to be right
invariant under the maximal compact subgroup $GL(n,\Z_p)$ of $GL(n,\Q_p)$.
 For congruence subgroups, one must alter the $p$-adic
 Whittaker functions $W_p$ for $p$ dividing the level
  (this must be done even classically, and corresponds to vector valued
  automorphic
  forms or distributions which transform under $GL(n,\Z)$ by a matrix action).
  We shall not pursue this here, except to note that this is a computational
  obstacle to deriving the Voronoi formula for arbitrary congruence subgroups
  that in principal can be solved with enough information about ramified
  Whittaker functions.  For notational convenience, we
   drop the subscript $\A$ from
  $\tau_\A$, since only this object and not $\tau$ itself will be used for the
  remainer of this section.

Our proof is based on two different formulas for the following period of the adelic automorphic distribution:
\begin{equation}\label{Vperiod}
    V(g) \ \ = \ \ \int_{N_1(\Q)\backslash N_1(\A)}\tau(ng)\,\overline{\psi_N(n)}\,dn\,,
\end{equation}
where $N_1$ is the unipotent radical of the standard $(2,1,1,\ldots,1)$ parabolic of $GL(n)$.  One easily sees
 that it has the Fourier expansion
\begin{equation}\label{Vperiodfourexp}
    V(g) \ \ = \ \ \sum_{r\,\in\,\Q^*}B\(\ttwo{r}{}{}{I_{n-1}} g\).
\end{equation}
Just like automorphic forms, the automorphic distribution $\tau$ has a contragredient dual automorphic distribution
 $\widetilde{\tau}$ defined through the map (\ref{Rjk1.2tau})
and the convention that
\begin{equation}\label{contratau}
    \widetilde{\tau}(g) \ \ = \ \ \tau(\tilde{g})\,.
\end{equation}
This convention also serves to define dual Whittaker functions and distributions   $\widetilde{W}_p$ and
$\widetilde{B}_\infty$
 which transform on the left according to the complex
conjugate character $\overline{\psi}$ of $N$, and $\widetilde{\tau}$  has a similar Fourier expansion in terms of
$\widetilde{B}=\widetilde{B}_\infty\widetilde{W}_p$.
  Since the finite Whittaker functions $W_p$ are assumed
to be right invariant under $GL(n,\Z_p)$, and $B_\infty$ is a distribution vector for a principal series
representation (\ref{autod}),
\begin{equation}\label{nminspher}
\tau, \ \widetilde{\tau}, \ V,\    B,\text{~and~~}\widetilde{B}~~\text{are all  right invariant
     under $N_{-}(\R)\times K_f$}\,,
\end{equation}
$K_f=\prod_{p<\infty}GL(n,\Z_p)$ denoting the standard maximal compact subgroup of $GL(n,\A_f)$. We also use the
customary notation $\widehat{\Z}=\prod_{p<\infty}\Z_p$ and $\widehat{\Z}^*=\prod_{p<\infty}\Z_p^*$.

\begin{prop}\label{psformulaprop}
Let $\widetilde{V}(g)$ denote the analogous period of $\widetilde{\tau}$, but with $\psi$ instead of $\psi\i$ in
the integral (\ref{Vperiod}).  Then
\begin{equation*}
    \widetilde{V}(g) \ \ = \ \ \int_{\A^{n-2}}V\(
                                               \(\begin{smallmatrix}
                                               0 & 0 & 1 \\
                                               I_{n-2} &  0 & x \\
                                               0 &  1 & 0 \\
                                             \end{smallmatrix}\)
    \tilde{g}\)dx\,.
\end{equation*}
\end{prop}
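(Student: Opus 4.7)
My plan is to deduce the proposition by explicit matrix manipulation after expressing both sides in terms of the boundary Whittaker distribution $B$.

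First I would substitute the Piatetski--Shapiro--Shalika Fourier expansion $\tau(g) = \sum_{\gamma \in N^{(n-1)}(\Q) \backslash GL(n-1,\Q)} B(\operatorname{diag}(\gamma, 1) g)$ into both $V(g)$ and $\int_{\A^{n-2}} V(M(x) \tilde g)\,dx$, and the analogous one for $\widetilde\tau$ into $\widetilde V(g)$. Unfolding the compact $N_1$-integral in each case, a direct computation conjugating $\operatorname{diag}(\gamma, 1)$ past $n \in N_1(\A)$ combined with character orthogonality on $\A/\Q$ kills all cosets except $\gamma = \operatorname{diag}(a, I_{n-2})$ for $a \in \Q^*$. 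This reproduces the Fourier expansion (\ref{Vperiodfourexp}) for $V$, gives $\widetilde V(g) = \sum_{a \in \Q^*} \widetilde B(\operatorname{diag}(a, I_{n-1}) g)$, and reduces the right-hand side to $\sum_{a \in \Q^*} \int_{\A^{n-2}} B(\operatorname{diag}(a, I_{n-1}) M(x) \tilde g)\, dx$.

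Next I would exploit the Bruhat-type factorization
\[
    \operatorname{diag}(a, I_{n-1}) \cdot M(x) \ = \ w \cdot u(x/a) \cdot \operatorname{diag}(I_{n-1}, a),
\]
where $w = M(0)$ is the $n$-cycle Weyl element and $u(x) = I + \sum_{j=1}^{n-2} x_j E_{j,n} \in N(\A)$. Since conjugation by $w$ sends each $E_{j,n}$ to $E_{j+1,1}$, we can also rewrite $w \cdot u(x) = n_-(x) \cdot w$ with $n_-(x) = I + \sum_j x_j E_{j+1,1} \in N_-(\A)$. After the change of variable $x \mapsto a x$ (whose adelic Jacobian is trivial since $|a|_\A = 1$ for $a \in \Q^*$), the right-hand side becomes schematically $\sum_{a \in \Q^*} \int_{\A^{n-2}} B(n_-(x) \cdot w \cdot \operatorname{diag}(I_{n-1}, a) \cdot \tilde g)\, dx$. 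The main step is then to identify the $\A^{n-2}$-integral $h \mapsto \int B(n_-(x) w h)\, dx$ with $\widetilde B$: geometrically, integration over this one-parameter family $n_-(x) \subset N_-(\A)$ composed with left translation by $w$ implements the standard intertwining operator attached to the $n$-cycle, and by uniqueness of Whittaker models on the principal series the resulting functional must coincide, up to a normalizing scalar pinned down at a single test point, with $\widetilde B$.

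The hard part will be rigorously identifying this intertwining integral with $\widetilde B$ and justifying convergence. Since $B = B_\infty \cdot \prod_p W_p$ factors over places, the global identity decomposes into local ones: at each finite prime, where $W_p$ is the spherical vector, it is a Casselman--Shalika-style computation on the unramified principal series; at the archimedean place, convergence and the explicit identification with $\widetilde B_\infty$ rely on the vanishing-to-infinite-order established in \lemref{inforderlem4}. This is presumably why the authors state that only the formal aspects of the calculation will be described here, and defer the analytic justification to the techniques developed in the earlier sections.
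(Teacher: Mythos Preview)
Your matrix manipulations are correct, and reducing both sides to sums over $\Q^*$ of Whittaker data is a reasonable first move. However, the proposal diverges sharply from the paper's argument at the crucial step, and there is a genuine gap.

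The gap is the identification of $h\mapsto \int_{\A^{n-2}} B(n_-(x)\,w\,h)\,dx$ with $\widetilde B$. You invoke ``uniqueness of Whittaker models,'' but for that to apply you must first verify that this integral, as a functional of $h$, transforms on the left under $N(\A)$ by the character $\overline{\psi_N}$. Conjugation by $w$ does not carry $N$ into $N$ (the $n$-cycle sends $E_{i,n}$ into the lower triangular part), so checking this transformation law already requires non-trivial interaction between the $n_-(x)$ integration and the conjugated unipotent---this is essentially the content of the proposition, not a preliminary. Even granting that, Whittaker uniqueness only yields equality up to a scalar at each place; pinning that scalar down to $1$ is precisely the local functional equation of Jacquet--Piatetski-Shapiro--Shalika type, which is of the same depth as what you are trying to prove. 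Your integral is over an $(n-2)$-dimensional abelian subgroup of $N_-$, not the unipotent associated with the standard intertwiner for $w$, so the ``standard intertwining operator'' interpretation is not immediate either.

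The paper takes a completely different, purely global route that avoids any local functional equation. It proves the statement for a smooth cusp form $\phi$ (the distributional version then follows formally) and never expands into Whittaker functions at all. Instead it introduces a chain of integrals $I_0,I_1,\dots,I_{n-2}$ interpolating between the two sides: $I_0$ is the $N'(\Q)\backslash N'(\A)$ period defining the left side, and $I_{n-2}$ is the right side. Each step $I_j=I_{j+1}$ is obtained by Fourier-expanding $\phi$ in one additional compact variable $y_1\in F\backslash\A$, then using left $GL(n,F)$-invariance of $\phi$ to absorb the resulting sum over $q\in F^{m-1}$ into a full adelic integration---converting an $(F\backslash\A)$-integral into an $\A$-integral one coordinate at a time. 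Absolute convergence is established at the last step via Jacquet--Shalika gauge estimates and propagates backward. This argument uses only automorphy and Fourier analysis on compact quotients; no local computations, no intertwining operators, no Whittaker uniqueness.
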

\noindent This proposition is formally equivalent to a well-known result
 in the Rankin-Selberg theory that unfortunately does not seem to be
  in the literature.
   For that reason we have chosen to give a proof of it in the appendix.

Our alternate derivation of the formula in \thmref{mainthm} uses the formula in proposition~\ref{psformulaprop}
with
\begin{equation}\label{gandgtil}
\aligned
    g \
     \ = & \ \
     \(\begin{smallmatrix}
                                               1 & -b & 0 \\
                                               0 & 1 & 0 \\
                                               0 &0  & I_{n-2} \\
                                             \end{smallmatrix}\)
                              \(\begin{smallmatrix}
                                 c_1\cdots c_{n-2} & 0 & 0 & 0 & 0 &0 &0\\
                                 0& c_1\cdots c_{n-2}  & 0 & 0 & 0&0 &0\\
                                 0& 0 & c_2\cdots c_{n-2} & 0 & 0&0 &0\\
                                    0& 0 &0& c_3\cdots c_{n-2} & 0 & 0&0 \\
                                    0& 0 &0& 0 &\ddots & 0&0 \\
                                    0 & 0 & 0 & 0 & 0 & c_{n-2} & 0 \\
                                    0 & 0 & 0 & 0& 0& 0& 1
                                             \end{smallmatrix}\) \\
                                           \text{and~~~~}   \tilde{g} \
     \ = & \ \
     \(\begin{smallmatrix}
                                             I_{n-2} & 0 & 0 \\
                                               0 & 1 & b \\
                                               0 &0  & 1 \\
                                             \end{smallmatrix}\)
                              \(\begin{smallmatrix}
                                 c_1\cdots c_{n-2} & 0 & 0 & 0 & 0 &0 &0\\
                                 0& c_1\cdots c_{n-3}  & 0 & 0 & 0&0&0 \\
                                 0& 0 & c_1\cdots c_{n-4} & 0 & 0&0 &0\\
                                    0& 0 &0& \ddots & 0 & 0&0 \\
                                    0& 0 &0& 0 &c_1 & 0&0 \\
                                    0 & 0 & 0 & 0 & 0 &  1 & 0 \\
                                    0 & 0 & 0 & 0& 0& 0& 1
                                             \end{smallmatrix}\)z\i\,,
   \endaligned
\end{equation}
where $z=c_1\cdots c_{n-2}I$ is in the center of $GL(n,\A)$ (which, in our full level situation, we may tacitly
assume $\tau$ and $\widetilde{\tau}$ are invariant under). Here the $c_j$ are elements of $\Z_{\neq 0}$, regarded
as a subset of the diagonally-embedded $\Q^*\subset \A$, and $b=t+\f{\a}{q}\in \A$, where $t \in \R$ and $\a \in
\widehat{\Z}^*$ is equal to the integer $a$ modulo $q$. Note that $\a$ is not simply equal to the diagonal
embedding of the integer $a$, but is altered at its prime factors so that it is a unit at each place.  Both sides
of the expression in the proposition are distributions in $t\in \R$.  In our calculation we will restrict $t\neq
0$, though this is only for formal convenience as the distributions can indeed be shown to vanish to infinite order
at $t=0$.  These distributions are in fact identical those in (\ref{chain3}),    but are instead packaged in a way
connected to the Jacquet-Piatetski-Shapiro-Shalika derivation of the standard $L$-function on $GL(n)$.

Recalling (\ref{Vperiodfourexp}) and that the definition of  $\widetilde{V}$ involves $\psi\i$ instead of $\psi$,
the left hand side is equal to
\begin{equation}\label{lhside1}
    \sum_{r\,\in\,\Q^*}  e(rt-r\smallf aq)\,
    \widetilde{B}
      \(
    \begin{smallmatrix}
                                 rc_1\cdots c_{n-2} & 0 & 0 & 0 & 0 &0 &0 \\
                                0& c_1\cdots c_{n-2} & 0 & 0 & 0&0 &0\\
                                 0& 0 & c_2\cdots c_{n-2} & 0 & 0&0 &0 \\
                                    0& 0 &0& c_3\cdots c_{n-2} & 0 & 0&0 \\
                                    0& 0 &0& 0 &\ddots & 0&0 \\
                                    0 & 0 & 0 & 0 & 0 & c_{n-2} & 0 \\
                                    0 & 0 & 0 & 0& 0& 0& 1 \\
                                         \end{smallmatrix}
                                         \).
\end{equation}
The presence of the minus sign for the finite factor $\f aq$ comes from the contribution of the finite places of
$b$, and is due to the fact that $\psi$ is additively-invariant under the diagonal embedding of $\Q$ inside $\A$.
By definition, the value of $\widetilde{B}$ on diagonal matrices factors
 into the contragredient Fourier coefficient  (\ref{ctoa})
from \secref{sec:background} times the value of the unnormalized inducing character. The Fourier coefficient
vanishes unless $r\in \Z$, so the above equals
\begin{equation}\label{lhside2}
   \sum_{r\,\neq\, 0}e(rt-r\smallf{a}{q})\, a_{c_{n-2},\ldots,c_1,r}\, |r|^{\l_n}\sgn(r)^{\d_n}\prod_{j=1}^{n-2}|c_j|^{\sum_{i\ge n-j}\l_i}\sgn(c_j)^{\sum_{i\ge n-j}\d_i}.
\end{equation}
Integrating in $t\in\R$ against the test function
 \begin{equation}\label{testfunction}
  \prod_{j=1}^{n-2}|c_j|^{
   -\sum_{i\ge n-j}\l_i}\sgn(c_j)^{\sum_{i\ge n-j}\d_i}
 \, \cdot \, {\mathcal F}\(f(u)|u|^{-\l_n}\sgn(u)^{\d_n}\)(t)
 \end{equation}
 gives the left hand side of the formula in \thmref{mainthm}.
 This is the exact same integration that was
  performed in the previous section to obtain the left hand side of the Voronoi formula, though in adelic terminology.

Now we examine the right hand side of the formula in proposition~\ref{psformulaprop}, with the purpose of
integrating it against
 (\ref{testfunction}).
Unlike the calculation in the previous section, this does not directly involve the distributions $\sigma_{j,m,k}$
that played a prominent role there, though  the mechanics are fundamentally related.  The product of the two
matrices inside of $V(\cdot)$ and the first factor of $\tilde{g}$ in (\ref{gandgtil}) equals
\begin{equation}\label{rhside1}
   \(\begin{smallmatrix}
                                               0 & 0 & 1 \\
                                               I_{n-2} &  0 & x \\
                                               0 &  1 & 0 \\
                                             \end{smallmatrix}\)
                                              \(\begin{smallmatrix}
                                             I_{n-2} & 0 & 0 \\
                                               0 & 1 & b \\
                                               0 &0  & 1 \\
                                             \end{smallmatrix}\)
    \ \ = \ \  \(\begin{smallmatrix}
                                             0 & 0 & 1 \\
                                               I_{n-2} & 0 & x \\
                                               0 &1  & b \\
                                             \end{smallmatrix}\).
\end{equation}
Let $D_c$ equal to the matrix $\operatorname{diag}(c_1\cdots c_{n-2},c_1\cdots c_{n-3},\cdots,c_1)\in GL(n-2,\Q)$,
and $x$ equal the column vector $(x_{n-2},\ldots,x_1)$. Using the fact $b\i=t\i+q\a\i$ is in
$\R^*\times\widehat{\Z}$, (\ref{nminspher}) implies the integrand in proposition~\ref{psformulaprop} equals
\begin{equation}\label{rhside2}
\aligned
  &  V\(
    \(\begin{smallmatrix}
                                             0 & 0 & 1 \\
                                               I_{n-2} & 0 & x \\
                                               0 &1  & b \\
                                             \end{smallmatrix}\)
                                              \(\begin{smallmatrix}
                                              D_c & 0 & 0  \\
                                              0 & 1 & 0  \\
                                               0 & 0  & 1 \\
                                             \end{smallmatrix}\)
            \(\begin{smallmatrix}
                                              I_{n-2} & 0 & 0  \\
                                              0 & 1 & 0  \\
                                               0 &-b\i  & 1 \\
                                             \end{smallmatrix}\)
                      \)                   \ \ =  \ \
                      V\(
        \begin{smallmatrix}
                                              0 & -b\i & 1  \\
                                              D_c & -b\i x & x  \\
                                               0 & 0  & b \\
                                             \end{smallmatrix}\) &
                                             \\
          &  \qquad\qquad\qquad\qquad\qquad =  \ \
          \,
                      V\(
                             \( \begin{smallmatrix}
                                              1 & 0 & b\i  \\
                                              0 & I_{n-1}& b\i x  \\
                                               0 & 0  & 1 \\
                                             \end{smallmatrix}\)
                                               \(  \begin{smallmatrix}
                                              0 & -b\i & 0  \\
                                              D_c & -b\i x & 0  \\
                                               0 & 0  & b \\
                                             \end{smallmatrix}\)
                      \)           \\
    & \qquad\qquad\qquad\qquad\qquad =  \ \
                              \psi(b\i x_{1})\,
     V\(
                                               \begin{smallmatrix}
                                              0 & -b\i & 0  \\
                                              D_c & -b\i x & 0  \\
                                               0 & 0  & b \\
                                             \end{smallmatrix}
                      \).
         \endaligned
\end{equation}
We change variables $x\mapsto bD_c x=(t+\f{\a}{q})D_c x$ in that integral, which has the overall effect of changing
the measure by $|tq|^{n-2}$ because $D_c$ is rational (the valuation here always refers to the archimedean one
unless specified otherwise).  Thus the integral equals
\begin{equation}\label{rhside3}
  |tq|^{n-2}\int_{\A^{n-2}}\psi(c_1x_1)\,V\(\(
                                               \begin{smallmatrix}
                                              b\i & 0 & 0  \\
                                              0 & D_c & 0  \\
                                               0 & 0  & b \\
                                             \end{smallmatrix}
                      \)\(
                                               \begin{smallmatrix}
                                              0 & -1 & 0  \\
                                              I_{n-2} &   -x & 0  \\
                                               0 & 0  & 1 \\
                                             \end{smallmatrix}
                      \)\)\,dx\,.
\end{equation}
In effect, this last matrix step is the only tool needed to compute the value of $V$  and, as we now demonstrate,
also reduces the range of integration.
\begin{prop}\label{lotiszero}
Let $\cal X$ denote the subset of $(x_1,\ldots,x_{n-2}) \in \A^{n-2}$ with successively nondecreasing $p$-adic
valuations at each prime, that is  
\begin{equation*}
{\cal X} \ : \ \ |x_{n-2}|_p \ \ \ge \ \  |x_{n-3}|_p \ \  \ge \ \
 \cdots  \ \ \ge |x_2|_p  \ \ \ge \ \ |x_1|_p \ \  \ge \ \ 1\,.  
\end{equation*}
Then (\ref{rhside3}) is equal to
\begin{equation}\label{rhside6}
 |tq|^{n-2}\int_{{\cal X}}\, \sum_{r\,\in\,\Q^*}\,
    \psi\(c_1x_1+
    \sum_{j\,=\,2}^{n-2}\smallf{c_jx_j}{x_{j-1}}    \,
    +\smallf{r}{bx_{n-2}}\)\,
   B(\D)\,dx\,
     ,
\end{equation}
where $\D$ is the diagonal matrix 
\begin{equation*}
\D \ \ = \ \ \operatorname{diag}(-r\smallf{c_1\cdots c_{n-2}}{bx_{n-2}},-\smallf{c_1\cdots
c_{n-2}x_{n-2}}{x_{n-3}},-\smallf{c_1\cdots c_{n-3}x_{n-3}}{x_{n-4}},\ldots,-\smallf{c_1c_2x_2}{x_1},-c_1x_1,b)\,.
\end{equation*} 
\end{prop}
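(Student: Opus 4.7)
The plan has four stages: a Fourier expansion of $V$, a Bruhat-type factorization of the matrix inside $B$, an Iwasawa decomposition at each adelic place, and identification of the support condition $\mathcal{X}$.

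First, using (\ref{Vperiodfourexp}) to expand $V$ as $\sum_{r\in\Q^*} B(\ttwo{r}{}{}{I_{n-1}} g)$ reduces the problem to evaluating $B(M^{(r)})$ for each $r\in\Q^*$, where $M^{(r)}$ is $\ttwo{r}{}{}{I_{n-1}}$ times the matrix inside $V$ in (\ref{rhside3}). A routine matrix multiplication shows
$$ M^{(r)} \ = \ w_0 \cdot \tilde A \cdot U^{-1}, $$
with $w_0$ the permutation sending $j \mapsto j{+}1$ for $j=1,\dots,n-2$, $n-1 \mapsto 1$, $n \mapsto n$; $\tilde A = \operatorname{diag}(c_1\cdots c_{n-2},\, c_1\cdots c_{n-3},\, \dots,\, c_1,\, -rb^{-1},\, b)$; and $U^{-1}$ the upper unipotent whose only nontrivial entries are $-x_{n-2},\dots,-x_1$ in column $n-1$. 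This exhibits $M^{(r)}$ as a representative of the Bruhat cell indexed by $w_0$, and equivalently $M^{(r)} = \tilde A' \cdot (w_0 U^{-1})$ with $\tilde A' = w_0 \tilde A w_0^{-1}$ the permuted diagonal.

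Next, at each adelic place I would massage the factorization into Iwasawa form $M^{(r)} = n \cdot \Delta \cdot k$ with $n \in N(\A)$, $\Delta$ diagonal, and $k \in N_-(\R)\times K_f$. This is accomplished by pushing $U^{-1}$ through $\tilde A$ via conjugation and then through $w_0$; the permutation $w_0$ turns the off-diagonal column of $U^{-1}$ into a column of the appropriate unipotent while simultaneously shifting the diagonal entries by factors $x_{n-j}/x_{n-j-1}$ (with the convention $x_0 = 1$). Applying $B$ and invoking (\ref{nminspher}) together with the left $\psi_N$-equivariance of $B$ then gives $B(M^{(r)}) = \psi_N(n)\, B(\Delta)$. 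A careful bookkeeping shows that $\psi_N(n)$ combined with the prefactor $\psi(c_1 x_1)$ produces exactly the character $\psi\big(c_1 x_1 + \sum_{j=2}^{n-2} c_j x_j / x_{j-1} + r/(b x_{n-2})\big)$ of the proposition, and that the resulting $\Delta$ matches the claimed one; the cross-terms $c_j x_j / x_{j-1}$ in the character are precisely the superdiagonal entries produced when conjugating $U^{-1}$ across $\tilde A$ and $w_0$ during the rearrangement.

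Finally, the restriction to $\mathcal{X}$ emerges from the local vanishing of the spherical Whittaker function $W_p$ at each finite place $p$: the Iwasawa diagonal produced above has entries whose $p$-adic valuations depend linearly on the $v_p(x_j)$, and the Casselman--Shalika support condition for the unramified Whittaker function $W_p$ translates into the monotonicity chain $v_p(x_{n-2}) \le v_p(x_{n-3}) \le \cdots \le v_p(x_1) \le 0$. Combining over all primes gives precisely $\mathcal{X}$, and outside this region the integrand vanishes identically. The main obstacle will be carrying out the Iwasawa decomposition explicitly at the finite places in the ``hard'' regime where some $|x_j|_p > 1$: here $U^{-1}_p \notin K_p$, and the factorization requires column operations with $p$-adically large coefficients, so tracking the cascade of shifts in the diagonal and in the unipotent factor is delicate. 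It is precisely the monotonicity built into $\mathcal{X}$ that allows these operations to telescope cleanly and reproduce the $\Delta$ and the character of the proposition on the nose.
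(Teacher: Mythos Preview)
Your proposal has a genuine gap in the final step. You claim that the restriction to $\mathcal X$ arises because the unramified Whittaker function $W_p$ vanishes outside $\mathcal X$ by the Casselman--Shalika support condition. But that support condition applies to the Iwasawa diagonal $\Delta_p$, and your identification of $\Delta_p$ with the diagonal matrix displayed in the proposition is only valid \emph{inside} $\mathcal X$. The lower-unipotent factor in your factorization has subdiagonal entries $-x_{j-1}/x_j$ and $-1/x_1$; these lie in $\Z_p$ precisely when $|x_j|_p \ge |x_{j-1}|_p$ and $|x_1|_p \ge 1$, i.e.\ precisely on $\mathcal X$. Outside $\mathcal X$ this factor is not in $K_p$, the Iwasawa decomposition is different, and the resulting diagonal is not the one you wrote down. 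Concretely: at a prime $p$ where every $x_j$ lies in $p\Z_p$, the matrix $\ttwo{0}{-1}{I_{n-2}}{-x}$ is already in $GL(n-1,\Z_p)$, so the Iwasawa diagonal is simply $\operatorname{diag}(rb^{-1},D_c,b)$, and for suitable $r,b,c_j$ this satisfies the dominance condition and $W_p$ does not vanish. So the integrand is \emph{not} supported on $\mathcal X$, and your argument for the restriction collapses.

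The paper obtains the restriction to $\mathcal X$ by an entirely different mechanism. It first uses the right-invariance of $V$ under integer unipotent matrices (from (\ref{nminspher})) to collapse the integral over $\A^{n-2}$ to one over coset representatives for $(\A/\widehat\Z)^{n-2}$, then invokes strong approximation to choose representatives with each $x_i^{-1}\in\R^*\times\widehat\Z$. The matrix identity (\ref{smash}) is applied iteratively; after each application a new invariance (under $x_j\mapsto x_j+z\,x_{j-1}$ with $z\in\widehat\Z$) permits a further choice of representative satisfying the next link $|x_j|_p\ge|x_{j-1}|_p$ in the chain. Only once all representatives lie in $\mathcal X$ does the factorization (\ref{smashold}) put the right factor into $N_-(\R)\times K_f$ at every place, and only then are (\ref{Vperiodfourexp}) and the $\psi_N$-equivariance of $B$ applied to reach (\ref{rhside6}). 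The Whittaker support condition you invoke does enter the paper's argument, but only \emph{after} the proposition, in (\ref{simprootcondition}), where it serves to break $\mathcal X$ into the divisor pieces (\ref{divisors}); it plays no role in producing $\mathcal X$ itself.
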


\begin{proof} By (\ref{nminspher}),
$V$ is invariant under right translation by the matrix $\(
                                               \begin{smallmatrix}
                                              I_{n-2} & v & 0  \\
                                               0 &   1 & 0  \\
                                               0 & 0  & 1 \\
                                             \end{smallmatrix}
                      \)$ for $v\in\widehat{\Z}^{n-2}$, which has the effect of translating $x$ to $x-v$.
Thus the integration over $\A^{n-2}$ reduces to the quotient $(\A/\widehat{\Z})^{n-2}$.  By ignoring the set where
all $(x_i)_\R$  vanish and appealing to strong approximation, we may chose coset representatives
$x=(x_{n-2},\ldots,x_1)$ such that each $x_i\i\in \R^*\times \widehat{\Z}$. This condition for $x_1$ along with the
matrix identity
\begin{equation}\label{smash}
    \( \begin{smallmatrix}
  &   &   &   &   & -1 \\
 1 &   &   &   &   & -x_{n-2} \\
  &  \ddots &   &   &   & \vdots \\
  &   &   &  1 &   & -x_2 \\
  &   &   &   &  1 & -x_1 \\
\end{smallmatrix} \) \(
 \begin{smallmatrix}
 1 &   &   &   &  \\
  &  \ddots &   &   &  \\
  &   &  \ddots &   &  \\
  &   &   &  1 &  \\
  &   &   &  \f{1}{x_1} & 1 \\
\end{smallmatrix}
\) \ \ = \ \
 \( \begin{smallmatrix}
 1 &   &   &   &   & -1 \\
  &  \ddots &   &   &   & -x_{n-2} \\
  &   &  &  1 &   & \vdots \\
  &   &   &  &  1 & -x_2 \\
  &   &   &   &  & -x_1 \\
\end{smallmatrix} \) \(
\begin{smallmatrix}
  &   &   &  -\f{1}{x_1} &  \\
 1 &   &   &  -\f{x_{n-2}}{x_1} &  \\
  &  \ddots &   &  \vdots &  \\
  &   &  1 &  -\f{x_2}{x_1} &  \\
  &   &   &   & 1 \\
\end{smallmatrix}\)
\end{equation}
allows us to replace the last matrix in (\ref{rhside3}) by these last two, regarded as embedded into the upper left
$(n-1) \,\times\,(n-1)$ block of $GL(n)$.  The first of these matrices may be passed through to the left, and
causes $V$ to transform by $\psi(\f{c_2x_2}{x_1})$. The last matrix now has the same form as the original one, but
of size one dimension smaller.  Right translation by an integer matrix as before, along with the structure of the
character, shows that this integral is unchanged if $x_2\mapsto x_2+zx_1$, for any $z\in \widehat{\Z}$.  Thus we
may again assume that $\f{x_1}{x_2}$ is a $p$-adic unit at each place. Continuing this way, we reduce the
integration to the domain $\mathcal X$.  To summarize, we have thus factored $\ttwo{}{-1}{I_{n-2}}{x}$ as
\begin{equation}\label{smashold}
\aligned & A \,U  = \ \  \\ & \ \ \(
                                               \begin{smallmatrix}
 -\f{1}{x_{n-2}} &  -\f{1}{x_{n-3}} &  -\f{1}{x_{n-4}} &  \cdots &  
   -\f{1}{x_1} & -1 \\
  &  -\f{x_{n-2}}{x_{n-3}} &  -\f{x_{n-2}}{x_{n-4}} &  \cdots &  
     -\f{x_{n-2}}{x_1} & -x_{n-2} \\
  &   &  -\f{x_{n-3}}{x_{n-4}} &  \cdots & 
    -\f{x_{n-3}}{x_1} & -x_{n-3} \\
  &   &   &  \ddots & 
    \vdots & \vdots \\
  &   &     &   &  -\f{x_2}{x_1} & -x_2 \\
  &   &      &   &   & -x_1 \\\end{smallmatrix}
                      \)\(
                                               \begin{smallmatrix}
 1 &   &   &   &   &       \\
 -\f{x_{n-3}}{x_{n-2}} &  1 &      &   &   &    \\
  &  -\f{x_{n-4}}{x_{n-3}} &    &   &   &     \\
     &   &  \ddots &   &   &  \\
  &     &   &    -\f{x_1}{x_2} &  1 &  \\
  &     &   &     &  -\f{1}{x_1} & 1 \\
                                             \end{smallmatrix}
                      \).
  \endgathered
\end{equation}
The matrix $U$ lies in $N_{-}(\R)\times GL(n-1,\widehat{\Z})$ in
 the region $\mathcal X$, and in this
situation the last matrix in (\ref{rhside3}) can be replaced by simply $\ttwo{A}{}{}1$, for $V$ is right-invariant
under such a matrix $U$. Therefore (\ref{rhside3}) can be written as
\begin{equation}\label{rhside4}
    |tq|^{n-2}\int_{{\cal X}}
    \psi(c_1x_1)\,V\(
\tthree{b\i}{0}{0}{0}{D_c}{0}{0}{0}{b} \ttwo{A}{}{}1
    \)\,dx\,,
\end{equation}
which equals
\begin{equation}\label{rhside5}
    |tq|^{n-2}\int_{{\cal X}}
    \psi(c_1x_1)\sum_{r\,\in\,\Q^*}B\(
    \tthree{r b\i}{0}{0}{0}{D_c}{0}{0}{0}{b}
\ttwo{A}{}{}1
    \)
\end{equation}
by (\ref{Vperiodfourexp}). To obtain the expression (\ref{rhside6}) we
  change the index of summation $r$ to $rc_1\cdots c_{n-2}$,
  and commute the upper triangular matrix $A$ across to the left so that it
  transforms out according the character $\psi$ of $N$ on the left of $B$.
\end{proof}

 The value of $B(\D)$ factors into an archimedean factor (which is the value
of the inducing character on this matrix), times local Whittaker factors. Since these local Whittaker functions are
unramified, their value is determined by the $p$-adic valuations on the simple roots, and vanish unless these are
all $\le 1$:
\begin{equation}\label{simprootcondition}
    |\smallf{q r  x_{n-3}}{x_{n-2}^2}|_p\,,\ |\smallf{c_{n-2}x_{n-2}x_{n-4}}{x_{n-3}^2}|_p\,,\
    |\smallf{c_{n-3}x_{n-3}x_{n-5}}{x_{n-4}^2}|_p\,, \ldots,  \,
    |\smallf{c_3 x_3x_1}{x_2^2}|_p\,, \
    |\smallf{c_2x_2}{x_1^2}|_p \,,
    \ |c_1x_1q|_p \, \le \,   1.
\end{equation}
Let us first consider the last of these inequalities, recalling that $|x_1|_p\le1$.  It constrains $d_1x_1$ to be a
$p$-adic unit at all places, for some divisor $d_1$ of $qc_1\in\Z$.  Since $d_1\mapsto \f{qc_1}{d_1}$ is a
bijection of such divisors, we may instead write $|x_1|_p=|\f{d_1}{qc_1}|_p$, for $d_1|qc_1$.  Likewise, the
constraints $|x_1|_p\le|x_2|_p\le|\f{x_1^2}{c_2}|_p$ mean that
$1\le|\f{x_2}{x_1}|_p\le|\f{x_1}{c_2}|_p=|\f{d_1}{qc_1c_2}|_p$. Thus, $|\f{x_2}{x_1}|_p$ equals
$|\f{d_2}{qc_1c_2/d_1}|_p$ for some $d_2|\f{qc_1c_2}{d_1}$, i.e. $|x_2|_p=|\f{d_1^2d_2}{q^2c_1^2c_2}|_p$.
Continuing, we see that the range of integration ${\mathcal X}$ in (\ref{rhside6}) can be broken up as the disjoint
union over
\begin{equation}\label{divisors}\aligned
d_1\,|&\,qc_1\\
d_2\,|&\,\smallf{qc_1c_2}{d_1}\\
d_3\,|&\,\smallf{qc_1c_2c_3}{d_1d_2}\\
\vdots \\
d_{n-2}\,|&\,\smallf{qc_1\cdots c_{n-2}}{d_1\cdots d_{n-3}}
\endaligned
\end{equation}
of
\begin{equation}\label{ranges}
\aligned
  &  \left\{\
    |x_1|_p \ = \ |\smallf{d_1}{qc_1}|_p \ , \ \ |\smallf{x_j}{x_{j-1}}|_p  \ = \
    |\smallf{d_1\cdots d_j}{qc_1\cdots c_j}|_p\ \ \ \text{for~~}j\,\ge\,2\,
    \right\} \ \ =
  \\
  & \qquad\qquad  = \ \ \left\{\
    |x_1|_p \ = \ |\smallf{d_1}{qc_1}|_p \ , \ \ |x_j|_p  \ = \
    |\smallf{d_1^jd_2^{j-1}\cdots d_j}{q^jc_1^jc_2^{j-1}\cdots c_j}|_p\ \ \ \text{for~~}j\,\ge\,2
   \, \right\}.
  \endaligned
\end{equation}
The divisors in (\ref{divisors}) are precisely the ones occurring on the right hand side of the formula in
\thmref{mainthm}, so we are reduced to showing that (\ref{rhside6}) -- when integrated over (\ref{ranges}) instead
of $\mathcal X$ -- corresponds to the rest of the right hand side of that formula.  The first constraint in
(\ref{simprootcondition}) governs the integrality of $r$, namely that on the piece (\ref{ranges}) one has that $r$
times $\f{q^n c_1^{n-1}c_2^{n-2}\cdots c_{n-2}^2}{d_1^{n-1}d_2^{n-2}\cdots d_{n-2}^2}$ is an integer.   When $r$ is
divided by that quantity, the sum over it in (\ref{rhside6}) becomes a sum over nonzero integers, corresponding to
the one on the right hand side in \thmref{mainthm}. We simultaneously change variables
\begin{equation}\label{xchangevar}
    x_1 \ \mapsto \ \smallf{d_1}{qc_1}\,x_1 \ , \ \ x_j \ \mapsto \ \smallf{d_1^jd_2^{j-1}\cdots d_j}{q^jc_1^jc_2^{j-1}\cdots c_j}\,x_j\ \ \text{for~~}j\,\ge\,2\,,
\end{equation}
which incurs no change of measure factor because these ratios are rational numbers.  This converts the domain
(\ref{ranges}) to $\{x_j\in \R\times \widehat{\Z}^*\}$, i.e.~adeles which are $p$-adic units at each finite place.
Then the relevant contribution of (\ref{rhside6}) is
\begin{equation}\label{rhside7}
\aligned &    |tq|^{n-2}\,\int_{(\R\times\widehat{\Z}^*)^{n-2}}
 \psi( \smallf{x_1d_1}{q} + \sum_{j\,=\,2}^{n-2} \smallf{d_jx_jx_{j-1}\i}{\smallf{qc_1\cdots c_{j-1}}{d_1\cdots d_{j-1}}}
+ \smallf{r}{b}\smallf{d_1 d_2 \cdots d_{n-2} }{q^2 c_1 c_2 \cdots c_{n-2} }  x_{n-2}\i ) \, \times
\\
&  B(\operatorname{diag}( -\smallf{r}{b}\smallf{d_1 d_2 \cdots d_{n-2} }{q^2 x_{n-2}}, -\smallf{d_1\cdots
d_{n-2}}{q}\smallf{x_{n-2}}{x_{n-3}}, -\smallf{d_1\cdots d_{n-3}}{q}\smallf{x_{n-3}}{x_{n-4}},
\ldots,-\smallf{d_1d_2x_2}{qx_1},-\smallf{d_1x_1}{q},b) ))dx.
\endaligned
\end{equation}
We next modify the signs of $r$ and the $x_j$  so that the signs of $x_1$,
$\f{x_2}{x_3},\ldots,\f{x_{n-2}}{x_{n-3}}$, and $\f{r}{x_{n-2}}$ are all flipped.  Thus we replace (\ref{rhside7})
with
\begin{equation}\label{rhside8}
\aligned &   |tq|^{n-2}\,\int_{(\R\times\widehat{\Z}^*)^{n-2}}
 \psi( -\smallf{x_1d_1}{q} - \sum_{j\,=\,2}^{n-2} \smallf{d_jx_jx_{j-1}\i}{\smallf{qc_1\cdots c_{j-1}}{d_1\cdots d_{j-1}}}
- \smallf{r}{b}\smallf{d_1 d_2 \cdots d_{n-2} }{q^2 c_1 c_2 \cdots c_{n-2} }  x_{n-2}\i ) \, \times
\\
& \ \ \times \, B(\operatorname{diag}(\smallf{r}{b}\smallf{d_1 d_2 \cdots d_{n-2} }{q^2 x_{n-2}}, \smallf{d_1\cdots
d_{n-2}}{q}\smallf{x_{n-2}}{x_{n-3}},\smallf{d_1\cdots d_{n-3}}{q}\smallf{x_{n-3}}{x_{n-4}},
\cdots,\smallf{d_1d_2x_2}{qx_1},\smallf{d_1x_1}{q},b) )).
\endaligned
\end{equation}
We now observe that both $\psi$ and $B$ in (\ref{rhside8}) split into an archimedean factor (which is a
distribution in $t\in\R$, the archimedean part of $b$), and a nonarchimedean factor (in which we similarly replace
$b$ by $\f{\a}{q}$). In the latter, the value of $B$ is precisely equal to
\begin{equation}\label{valueofB}|r|^{-(n-1)/2}\,\(\prod_{j\,=\,1}^{n-2}
    |d_j|^{-j(n-j)/2}\) \
    a_{r,d_{n-2},\ldots,d_{n-1}}\,,
\end{equation}
because $\a$ and the $x_i$ are all $p$-adic units.

An arbitrary element of $\A_f$ has the form $x_f+z$, where $x_f$ is the finite part of a rational number $x$ and
$z\in\widehat{\Z}$; the value of $\psi$ on such an adele is equal to $e(-x)$.
 Thus if an element of $\f{qc_1\cdots
c_j}{d_1\cdots d_j}\widehat{\Z}$ is added to $x_j$, the character $\psi$ is
 unchanged, and the $(\widehat{\Z}^*)^{n-2}$
part of the integral in (\ref{rhside8}) breaks up into a sum over cosets: it equals the  hyperkloosterman sum in
\thmref{mainthm}, divided by
 \begin{equation}\label{index}
 \left| \prod_{j\le n-2} \smallf{qc_1\cdots c_j}{d_1\cdots d_j} \right| \ \ = \ \ \left|
\smallf{q^{n-2}c_1^{n-2}c_2^{n-3}\cdots c_{n-2}}{d_1^{n-2}d_2^{n-3}\cdots d_{n-2}}\right| \,,
\end{equation}
which is the  adelic measure (i.e.~index) of this set that it is trivial on within $\widehat{\Z}^{n-2}$.

At this point the calculation has shifted from adeles to reals, and essentially repeats the final steps of the
calculation in the previous section. The archimedean part (\ref{rhside8}), including the factor of $|tq|^{n-2}$, is
a distribution in $t\in \R$ which is integrated against the function (\ref{testfunction}).  In order to symmetrize
the expression which follows, we now relabel $t$, the archimedean part of $b$, as $x_{n-1}$, and the variable $u$
from (\ref{testfunction}) as $x_{n}$. This contribution is equal to the product of powers of $|c_j|$ and
$\sgn(c_j)$ there times
\begin{equation}\label{archpart1}
\aligned &
 \(\prod_{j\,=\,1}^{n-2}
|d_j|^{\sum_{i\le n-j}(\f{n+1}{2}-i-\l_i)} \,\sgn(d_j)^{\sum_{i\le n-j}\d_i}\) \
\times \\
& \ \, \times \,
    \int_{\R^n}e(-\smallf{x_1d_1}{q} - \sum_{j\,=\,2}^{n-2}
     \smallf{d_jx_jx_{j-1}\i}{~\smallf{qc_1\cdots c_{j-1}}{d_1\cdots d_{j-1}}~}
- \smallf{r}{x_{n-1}}\smallf{d_1 d_2 \cdots d_{n-2} }{q^2 c_1 c_2 \cdots c_{n-2} }  x_{n-2}\i - x_{n-1}x_n)\ \times
\\
& \ \, \times \,
 f(x_{n})\,|x_n|^{-\l_n}\,\sgn(x_n)^{\d_n}\,
|r|^{\f{n-1}{2}-\l_1}\,\sgn(r)^{\d_1}\, |qx_{n-1}|^{\l_1-\l_n-1} \ \times
 \\
& \ \,  \times \, \sgn(qx_{n-1})^{\d_1+\d_n}\, \(\prod_{j\,=\,1}^{n-2}|x_j|^{\l_{n-j-1}-\l_{n-j}-1}\,
\sgn(x_j)^{\d_{n-j-1}+\d_{n-j}}\)dx_1\cdots dx_n\,.
\!\!\!\!\!\!\!\!\!\!\!\!\!\!\!\!\!\!\!\!\!\!\!\!\!\!\!\!\!\!\!\!\!\!\!\!\!\!\!\!
\!\!\!\!\!\!\!\!\!\!\!\!\!\!\!\!\!\!\!\!\!\!\!\!\!\!\!\!\!\!\!\!\!\!\!\!
\endaligned
\end{equation}
To symmetrize the argument of the exponential, we change $x_{n-1}\mapsto x_{n-1}\i$; this changes its occurrence
outside the exponential to $|x_{n-1}|^{-\l_1+\l_n-1}\sgn(x_{n-1})^{\d_1+\d_n}$. Each term in the exponential, aside
from the first, is now a ratio of successive variables $\f{x_j}{x_{j-1}}$.  Changing variables $x_j\mapsto
x_1\cdots x_j$ converts these ratios each to $x_j$, and the expression  (\ref{archpart1}) as a whole to
\begin{equation}\label{archpart2}
\aligned & |q|^{\l_1-\l_n-1}\sgn(q)^{\d_1+\d_n}\( \prod_{j\,=\,1}^{n-2} |d_j|^{\sum_{i\le n-j}\f{n+1}{2}-i-\l_i}
\sgn(d_j)^{\sum_{i\le n-j}\d_i}\)|r|^{\f{n-1}{2}-\l_1} \ \times
\!\!\!\!\!\!\!\!\!\!\!\!\!\!\!\!\!\!\!\!\!\!\!\!\!\!\!\!\!\!\!\!\!\!\!\!\!\!\!\!
\!\!\!\!\!\!\!\!\!\!\!\!\!\!\!\!\!\!\!\!\!\!\!\!\!\!\!\!\!\!\!\!\!\!\!\!\\
& \ \times \, \sgn(r)^{\d_1} \int_{\R^n}e(-\smallf{x_1d_1}{q} - \sum_{j\,=\,2}^{n-2} \smallf{d_jx_j}{\smallf{qc_1\cdots
c_{j-1}}{d_1\cdots d_{j-1}}} -
r\smallf{d_1 d_2 \cdots d_{n-2} }{q^2 c_1 c_2 \cdots c_{n-2} }  x_{n-1}  - x_n) \ \times\\
& \ \qquad\qquad \ \  \times \, f(x_1\cdots x_{n})\,
|x_n|^{-\l_n}\,\sgn(x_n)^{\d_n}\,|x_{n-1}|^{-\l_1}\,\sgn(x_{n-1})^{\d_1} \  \times
\\
& \ \qquad \qquad\qquad\qquad \qquad \ \ \times \,\( \prod_{j\,=\,1}^{n-2}|x_j|^{-\l_{n-j}}\,
\sgn(x_j)^{\d_{n-j}}\)\,dx_1\cdots dx_n\,.
\endaligned
\end{equation}
We now come to the final change of variables, which maps $x_j\mapsto \f{qc_1\cdots c_{j-1}}{d_1\cdots d_j}x_j$ for
$ j \le n-2$, and $x_{n-1}\mapsto \f{q^2c_1\cdots c_{n-2}}{rd_1\cdots d_{n-2}}x_{n-1}$.  The argument of $f$ is
divided by the product of these ratios, $\f{rd_1^{n-1}d_2^{n-2}\cdots d_{n-2}^2}{q^nc_1^{n-2}c_2^{n-3}\cdots
c_{n-2}}$, and so this integral is therefore  by (\ref{Ffromf})  equal to the same instance of
$F(\f{rd_1^{n-1}d_2^{n-2}\cdots d_{n-2}^2}{q^nc_1^{n-2}c_2^{n-3}\cdots c_{n-2}})$ occurring in \thmref{mainthm},
times the change of variables factor
\begin{equation}\label{theendisnear}
   \(\prod_{j\,=\,1}^{n-2} |\smallf{qc_1\cdots c_{j-1}}{d_1\cdots d_j}|^{1-\l_{n-j}}
   \sgn(\smallf{qc_1\cdots c_{j-1}}{d_1\cdots d_j})^{\d_{n-j}}\)
   |\smallf{q^2c_1\cdots c_{n-2}}{rd_1\cdots d_{n-2}}|^{1-\l_1}
   \sgn(\smallf{q^2c_1\cdots c_{n-2}}{rd_1\cdots d_{n-2}})^{\d_1}
   .
\end{equation}
To finish, we multiply the products of $|q|$, $\sgn(q)$, $|r|$, $\sgn(r)$, $|c_j|$, $\sgn(c_j)$, $|d_j|$, and
$\sgn(d_j)$ from (\ref{testfunction}), (\ref{valueofB}), (\ref{archpart1}), and (\ref{theendisnear}), and divide by
(\ref{index}). Using (\ref{sumtozero}), this indeed results in $\f{|q|}{|rd_1\cdots d_{n-2}|}$, verifying the right
hand side of the formula in \thmref{mainthm}.

\appendix

\section{Appendix: Proof of Proposition~\ref{psformulaprop}}\label{appendix}

In this appendix we prove  proposition~\ref{psformulaprop}, the main identity used in the adelic proof of our
$GL(n)$ Voronoi summation formula. We do this by establishing a more general statement, namely a crucial but
unpublished ingredient in the  work of Jacquet, Piatetski-Shapiro, and Shalika on the tensor product $L$-functions
on $GL(n_1)\times GL(n_2)$ for $|n_1-n_2|>1$.  Despite not appearing in their papers, it is both the mechanism by
which their functional equation and analytic properties are established, and the source for the local integrals
which they study in detail.

The more general statement concerns a smooth cusp form $\phi$ on the quotient $GL(n,F)\backslash GL(n,\A)$, where
$F$ is a number field and $\A=\A_F$ is its ring of adeles (our application here only requires $F=\Q$).  In
particular it  does not involve a level assumption.  By convention, we define the dual form $\tilde{\phi}$ by the
formula $\widetilde{\phi}(g)=\phi(\tilde{g})$ (see (\ref{contratau})).

\begin{prop}\label{appprop}
 Fix $1<m<n$ and let $N^\circ$ denote the unipotent radical of the $(m,1,1,\ldots,1)$ standard parabolic subgroup of $GL(n)$.  Define periods
 \begin{equation*}
    \aligned
\ \ \ \    V(g) \ \ & = \ \ \int_{N^\circ( F)\backslash N^\circ(\A)}\,\phi(ng)\,\overline{\psi_N(n)}\,dn \\
   \text{and} \ \ \widetilde{V}(g) \ \ & = \ \ \int_{N^\circ( F)\backslash N^\circ(\A)} \tilde{\phi}(ng)\,\psi_N(n)\,dn \\
   & = \ \ \int_{\widetilde{N^\circ}( F)\backslash \widetilde{N^\circ}(\A)} \phi(n\tilde{g})\,\overline{\psi_N(n)}\,dn \,,
    \endaligned
 \end{equation*}
where $\widetilde{N^\circ}$ is the unipotent radical of the $(1,1,\ldots,1,m)$ standard parabolic subgroup of
$GL(n)$. Then
\begin{equation*}
   \widetilde{V}(g) \ \ = \ \ \int_{M_{n-m,m-1}(\A)} V\(\tthree{I_{m-1}}{}{}{X}{I_{n-m}}{}{}{}1 w \,\tilde{g}\)dX\,,
\end{equation*}
in which the integral converges absolutely,
 $M_{k,\ell}$ denotes $k\times \ell$ matrices, and  $w$ denotes the permutation matrix $\ttwo{}{I_{m-1}}{I_{n-m+1}}{}$.
\end{prop}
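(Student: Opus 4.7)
My plan is to reduce the identity to a standard unfolding identity for cusp forms, by first using conjugation by $w$ to align the two sides up to unipotent factors, and then invoking cuspidality.

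First, since $w\in GL(n,F)$ and $\phi$ is left $GL(n,F)$-invariant, I would change variables $n\mapsto w^{-1}nw$ in the definition of $\widetilde V(g)$. A direct matrix computation shows that $w\widetilde{N^\circ}w^{-1}$ factors, as an $F$-variety, as $U^-\cdot U^+$, where $U^-=\{u_X:X\in M_{n-m,m-1}\}$ is precisely the abelian group appearing in the proposition (entries in the lower-left block, rows $\{m,\ldots,n-1\}$ and columns $\{1,\ldots,m-1\}$), and $U^+:=w\widetilde{N^\circ}w^{-1}\cap N$ has entries in positions $\{m,\ldots,n-1\}\times\{m+1,\ldots,n\}$ and is contained in $N^\circ$. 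Matching superdiagonal entries under the $w$-conjugation yields $\psi_N(w^{-1}u_Xw)=1$ and $\psi_N(w^{-1}u^+w)=\psi_N(u^+)$, so that after parametrizing the conjugate unipotent as $n'=u^+ u_X$ one obtains
\[
\widetilde V(g) \ = \ \int_{U^-(F)\backslash U^-(\A)}\int_{U^+(F)\backslash U^+(\A)} \phi(u^+ u_X w\tilde g)\,\overline{\psi_N(u^+)}\,du^+\,dX .
\]

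Next, I would decompose $N^\circ = U^\perp\cdot U^+$, where $U^\perp$ is the abelian subgroup of $N^\circ$ corresponding to the top-right $(m-1)\times(n-m)$ block (entries in rows $\{1,\ldots,m-1\}$ and columns $\{m+1,\ldots,n\}$), on which $\psi_N$ is trivial. Substituting into the right-hand side of the proposition converts it into the triple integral
\[
\int_{U^-(\A)}\int_{U^\perp(F)\backslash U^\perp(\A)}\int_{U^+(F)\backslash U^+(\A)} \phi(u^\perp u^+ u_X w\tilde g)\,\overline{\psi_N(u^+)}\,du^+\,du^\perp\,dX.
\]
Interchanging the order of integration, the proposition reduces to the identity
\[
\int_{U^-(F)\backslash U^-(\A)} H(u_X)\,dX \ = \ \int_{U^-(\A)}\int_{U^\perp(F)\backslash U^\perp(\A)} H(u^\perp u_X)\,du^\perp\,dX
\]
for the function $H(h):=\int_{U^+(F)\backslash U^+(\A)}\phi(u^+ h w\tilde g)\,\overline{\psi_N(u^+)}\,du^+$. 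I would establish this by Fourier-expanding $H$ along the compact abelian quotient $U^-(F)\backslash U^-(\A)$: the non-trivial Fourier components, once averaged over $U^\perp(F)\backslash U^\perp(\A)$, become degenerate Whittaker-type integrals of $\phi$ against characters of the larger unipotent subgroup generated by $U^-$ and $U^\perp$, which vanish by cuspidality of $\phi$ along an appropriate standard parabolic, while the trivial Fourier component reproduces the left-hand side.

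The main obstacle will be this last step, where the Fourier characters along $U^-$ must be carefully matched with constant-term vanishings of $\phi$ along standard parabolic subgroups, and commutators among $U^\pm$, $U^+$, $U^\perp$ must be tracked under the exchange of orders of integration (in particular the fact, verified by a quick Lie-algebra calculation, that $[U^+,U^-]\subset U^-$). Absolute convergence of the outer $X$-integration will follow, as in the standard Jacquet-Piatetski-Shapiro-Shalika integral representations, from the rapid decay of cusp forms in Siegel domains.
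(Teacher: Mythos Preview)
Your initial reductions are correct: conjugation by $w$ and the factorizations $w\widetilde{N^\circ}w^{-1}=U^+U^-$ and $N^\circ=U^\perp U^+$ hold exactly as you describe, with the stated normality relations, and the reduction to the displayed identity for $H$ is valid as an identity of iterated integrals over fundamental domains.

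The gap is in the final step. First, $H$ is \emph{not} a periodic function of $X$. Because $[U^+,U^-]\subset U^-$, it is $U^+$ that normalizes $U^-$, not conversely; thus for $\gamma\in U^-(F)$ one has $\gamma^{-1}u^+\gamma = u^+\cdot c$ with $c=c(u^+,\gamma)\in U^-(\A)$ genuinely adelic, and automorphy of $\phi$ cannot absorb it. So the Fourier expansion of $H$ along $U^-(F)\backslash U^-(\A)$ is not available. (Reversing the order to $n'=u_Xu^+$ does yield a periodic inner integral, but then the right-hand side can no longer be expressed in terms of it without commuting $u^\perp$ past $u_X$.) Second, and more decisively, the group generated by $U^-$ and $U^\perp$ is not unipotent: for example $[E_{m+1,1},E_{1,m+1}]=E_{m+1,m+1}-E_{1,1}$ is semisimple. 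Hence there is no ``larger unipotent subgroup generated by $U^-$ and $U^\perp$'' along whose radical cuspidality would annihilate the non-trivial Fourier modes.

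The paper's proof avoids both obstacles by an inductive unfolding. It defines intermediate integrals $I_0,\dots,I_{n-m}$ in which, at stage $j$, only the rightmost $j$ columns of the $U^\perp$-block are present, and only the bottom $j$ rows of the $U^-$-block have been promoted from $F\backslash\A$ to all of $\A$. The step $I_j=I_{j+1}$ is effected by left-translating $\phi$ by a \emph{rational} matrix $Q$ supported in a single row of $U^-$; the resulting commutator $u'=I+QY$ lies in the derived group of $U^+$ (so $\psi_N$ is unchanged), and its conjugation action on the $X$-block disturbs only those rows already integrated over $\A$. The sum over $q\in F^{m-1}$ then merges with the compact integral over that one row of $X_1$ to produce a full adelic integral. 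This row-by-row bookkeeping is precisely what makes the commutator structure close; attempting all $n-m$ rows simultaneously brings in the bad bracket $[U^-,U^\perp]$ and collapses the argument.
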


Proposition~\ref{psformulaprop} corresponds to the case $m=2$, but for distributions instead of smooth forms.
However, the distributional version stated there is equivalent to the statement here (for full-level cusp forms),
because automorphic distributions are equivalently linear functionals which control  their embeddings into spaces
of smooth functions. The periods $V$ defined here have Fourier expansions as sums of Whittaker functions, left
translated by elements of $GL(m-1, F)$ embedded into the upper left corner of $GL(n,\A)$.  Together with the
relation above, this implies the unfolding of Jacquet-Piatetski-Shapiro-Shalika's integral representation, as
quoted in \cite[Theorem 2.1]{cogdell}, for example.

Before giving the proof, it is helpful to describe some relevant aspects of integration  over quotients of
nilpotent groups which did not arise in the published special cases \cite{cogps,jpss} of proposition~\ref{appprop}.
Suppose that $G=G_1\ltimes G_2$ is the semidirect product of locally compact Hausdorff groups such that  $G_2$ is
abelian, and that the continuous action $\rho$ of $G_1$ on $G_2$ appear in the group law
\begin{equation}\label{semidirectproduct}
    (g_1,g_2)\cdot (h_1,h_2) \ \ = \ \ (g_1h_1,\,g_2+\rho(g_1)h_2)
\end{equation}
preserves its Haar measure.
 Under this assumption, the left
Haar measure on $G$ is the product measure of the left Haar measure on $G_1$ with the Haar measure on $G_2$.  The
group $G$ is furthermore unimodular if $G_1$ is.

We further suppose each $G_i$ has a discrete subgroup $\G_i$, with the semidirect product $\G=\G_1\ltimes \G_2$
itself discrete in $G$.  In particular, for each $\g_1\in \G_1$, $\rho(\g_1)$ acts bijectively on $\G_2$. Recall
that a fundamental domain for a discrete subgroup $\D$ of a topological group $H$ is an open set $S\subset H$ whose
left $\D$-translates are disjoint yet dense in $H$.  For any $h\in H$, $Sh$ is also a fundamental domain for $\D$,
but $hS$ is instead  a fundamental domain for $h\D h\i$. If $S_2$ is a fundamental domain for $\G_2\subset G_2$,
$\rho(\g_1) S_2$ is also a fundamental domain for each $\g_1\in \G_1$, because $\G_1$ normalizes $\G_2$.  It
follows from this that if $S_1$ is a fundamental domain for $\G_1\subset G_1$, then $S=S_1 \times S_2$ is a
fundamental domain for $\G\subset G$.  More generally, if $f:S_1\rightarrow G_2$ is continuous, then
$\{(s_1,f(s_1)+s_2)\,|\,s_1\in S_1,\,s_2\in S_2\}$ is also a fundamental domain for $\G\backslash G$.

We now apply the above considerations to an iterated semidirect product.  Namely, suppose that $U$ is any unipotent
radical of a parabolic subgroup of $GL(n)$, having dimension $d$.  Its adelic points $U(\A)$ can be identified,
setwise, with $d$-tuples $(u_1,\ldots,u_d) \in \A^d$, and its Haar measure is the product of Haar measures
$du_1\cdots du_d$ from each copy.  A fundamental domain for $U( F)\backslash U(\A)$ is given by any product of
fundamental domains for each $u_i\in  F\backslash \A$, one for each copy.  We always normalize our Haar measures to
give volume 1 to $( F\backslash \A)^d$ under this identification.   Because $U$ is an iterated semidirect product
of abelian groups, the remark at the end of the previous paragraph implies the following by induction:

\begin{prop}\label{leftfunddomprop}
Let $U$ be the unipotent radical of a parabolic subgroup of $GL(n)$. Then left translation by elements of $U(\A)$
maps any fundamental domain for $U( F)\backslash U(\A)$ into another.
\end{prop}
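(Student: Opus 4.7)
My plan is to induct on $\dim U$, exploiting the iterated central extension structure of $U$ (which is what ``iterated semidirect product of abelian groups'' in the preceding sentence amounts to for unipotent $U$, since extensions by one-dimensional subgroups are central but need not be split) together with the graph form of the fundamental-domain lemma established in the previous paragraphs. Concretely, I would choose a filtration $U = U^{(0)} \supset U^{(1)} \supset \cdots \supset U^{(d)} = \{e\}$ by normal $F$-subgroups with each successive quotient $U^{(i)}/U^{(i+1)}$ one-dimensional and central in $U/U^{(i+1)}$; such a filtration exists because $U$ is nilpotent. The base case $d = 1$ is immediate: $U \cong \mathbb{G}_a$ is abelian, $U(F)$ is central in $U(\A)$, and left translation commutes with the $U(F)$-action.

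For the inductive step, set $Z := U^{(d-1)}$ and $\bar{U} := U/Z$. An $F$-rational variety cross-section of $U \to \bar{U}$ identifies $U$ with $\bar{U} \times Z$ as an $F$-variety, with the group law taking the form $(\bar{v}, z)(\bar{v}', z') = (\bar{v}\bar{v}', z + z' + c(\bar{v}, \bar{v}'))$ for a polynomial 2-cocycle $c : \bar{U} \times \bar{U} \to Z$. For a product fundamental domain $S = \bar{S} \times S_Z$, a direct calculation followed by the reparameterization $\bar{w} = \bar{u}\bar{v}$ gives
\[
    u \cdot S \ = \ \{\,(\bar{w},\, f(\bar{w}) + v_z)\,\mid\,\bar{w} \in \bar{u}\bar{S},\,\,v_z \in S_Z\,\}\,,
\]
with $f(\bar{w}) := u_z + c(\bar{u}, \bar{u}^{-1}\bar{w})$ continuous (in fact polynomial) in $\bar{w}$. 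By the inductive hypothesis, $\bar{u}\bar{S}$ is a fundamental domain for $\bar{U}(F)\backslash \bar{U}(\A)$, and the graph form of the lemma from the preceding paragraphs (whose proof carries over verbatim from semidirect products to central extensions, since the cocycle enters only as an additional continuous shift of the $Z$-coordinate and does not affect how the second coordinate is solved for given the first) then identifies $u \cdot S$ as a fundamental domain for $U(F)\backslash U(\A)$.

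The main obstacle I anticipate is extending the argument from product-type fundamental domains to completely arbitrary measurable ones. My plan here is to give an inductive fibered description: for any measurable fundamental domain $S$ and the projection $\pi : U(\A) \to \bar{U}(\A)$, the fiber $S \cap \pi^{-1}(\bar{w})$ over almost every $\bar{w}$ is, after a $Z(F)$-cut-and-paste along that fiber, a translate of a fixed fundamental domain for $Z(F) \backslash Z(\A)$ by a measurable function of $\bar{w}$. Left translation by $u$ transforms this fibered structure by continuous shifts of both base and fiber, so the graph-form lemma applied level by level along the filtration propagates the conclusion to all of $U$; the measurable selections required are provided by a standard Borel section theorem for the compact quotient $U(F) \backslash U(\A)$.
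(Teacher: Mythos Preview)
Your first two paragraphs are correct and essentially reproduce the paper's one-line proof; the only difference is that you work with central extensions and a cocycle, whereas the paper exploits that unipotent radicals in $GL(n)$ split as honest iterated semidirect products of abelian groups (e.g.\ the last block-column is an abelian normal subgroup with the upper-left block as complement), so no cocycle is needed. Either route proves the statement for product-type fundamental domains and their left translates --- the graph-type domains of the paper's preceding paragraph.

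Your third paragraph, however, is attempting to prove something false. Read literally for \emph{arbitrary} open fundamental domains, the assertion fails already in the Heisenberg group. With $\Gamma=U(\Z)\subset U(\R)$, start from $S_0=(-\epsilon,1-\epsilon)^3$, remove a small closed ball $\bar B$ around $(0,0,\tfrac12)$, and adjoin its $\Gamma$-translate $B'=(0,1,0)\cdot B$ around $(0,1,\tfrac12)$. The resulting open set $S$ is a fundamental domain for $\Gamma$, yet for $u=(\tfrac12,0,0)$ one computes $u^{-1}\Gamma u=\{(a,b,c-\tfrac{b}{2}):a,b,c\in\Z\}$, and the two points $(0,0,0),(0,1,\tfrac12)\in S$ lie in the same $u^{-1}\Gamma u$-orbit; hence $uS$ is \emph{not} a fundamental domain for $\Gamma$. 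The same construction works adelically with $u$ taken at the archimedean place. Your proposed fibered description breaks exactly here: the fiber of $S$ over $(0,0)$ has measure strictly less than $1$, so no $Z(F)$-cut-and-paste can turn it into a fundamental domain for $Z(F)\backslash Z(\A)$.

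Fortunately the extension is unnecessary. In the paper's applications one starts from a product domain and applies finitely many left translations, remaining throughout in the graph-type class for which the induction of your first two paragraphs (and the paper's) is already complete. Drop the third paragraph.
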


\begin{proof}[Proof of proposition~\ref{appprop}]
Let $N'$ denote the subgroup $w\widetilde{N^\circ}w\i$, so that
\begin{equation}\label{appprop3}
\widetilde{V}(g) \ \ = \ \ \int_{N'( F)\backslash N'(\A)}\,\phi(nw\tilde{g}) \, \overline{\psi_N(n)}\,dn\,.
\end{equation}
This reduces the proposition to showing
\begin{multline}\label{appprop4}
    \int_{N'( F)\backslash N'(\A) }\phi(ng) \, \overline{\psi_N(n)}\,dn \ \ = \\ \ \ \int_{M_{n-m,m-1}(\A)} \int_{N^\circ( F)\backslash N^\circ(\A)}
    \phi\(n \tthree{I_{m-1}}{}{}{X}{I_{n-m}}{}{}{}1 g\)\overline{\psi_N(n)}\,dn\,dX
\end{multline}
for an arbitrary $g\in GL(n,\A)$.  Let $U$ denote the subgroup of unit upper triangular matrices in $GL(n-m+1)$.
The character $\psi_N$ for $GL(n-m+1)$ makes sense on $U(\A)$, and agrees with its
 $GL(n)$ variant when $U$ is embedded into the lower right corner; as no confusion will arise,
 we shall use the notation $\psi_N$ for either sized matrix.
Using the notation $M_{k,\ell}( F\backslash \A)$ as shorthand for a fundamental domain for the quotient
$M_{k,\ell}( F)\backslash M_{k,\ell}(\A)$, we define the following integrals for $0\le j \le n-m$:
\begin{equation}\label{appprop5}
\aligned &    I_j \ \ = \ \  \  \int_{\begin{smallmatrix}
y\,\in\,M_{m-1,j}( F\backslash \A) \\
X_1 \, \in \, M_{n-m-j,m-1}( F\backslash \A) \\
X_2\,\in\, M_{j,m-1}(\A) \\
u\,\in\,U( F)\backslash U(\A)
\end{smallmatrix}}
   \! \phi\(\!
\tthree{I_{m-1}}{}{y}{}{I_{n+1-m-j}}{}{}{}{I_j}\! \ttwo{I_{m-1}}{}{\begin{smallmatrix} X_1 \\ X_2
\\  0
\end{smallmatrix}}u g\)\,\times \\ & \ \ \qquad \qquad \qquad \qquad\qquad
\qquad\qquad \qquad \qquad \qquad \times \ \overline{\psi_N(u)}\,du\,dX_2\,dX_1\,dy\,.
\endaligned
\end{equation}
The $y$ integration is clearly well defined on the quotient, justifying the use of the fundamental domain for it.
We now argue that the $X_1$ and $u$
integrations are as well.  To simplify notation, let us temporarily write $Y=[0\ y]\in M_{m-1,n+1-m}$ and $X=\left[\begin{smallmatrix} X_1 \\
X_2 \\  0
\end{smallmatrix}\right]\in M_{n+1-m,m-1}$, so that the argument of
$\phi$ is $\ttwo{I_{m-1}}{Y}{}{I_{n+1-m}}\ttwo{I_{m-1}}{}{X}{u} g$. Suppose first that $u$ is replaced by $\g u$,
for $\g \in U( F)$. Since $\phi$ is left invariant under $\ttwo{I_{m-1}}{}{}{\g\i}$, the argument of $\phi$ can be
replaced by $\ttwo{I_{m-1}}{Y\g}{}{I_{n+1-m}}\ttwo{I_{m-1}}{}{\g\i X}{u}$. Since $\g$ is unipotent upper
triangular, a column of $Y\g$ is formed its counterpart in $Y$ by adding multiples of preceding columns  to it.
The reverse change of variables $y\mapsto y\g\i$ preserves the subgroup $M_{m-1,j}( F)$ and the measure $dy$.
Likewise, the variables  $X_1$ and $X_2$ can be changed to convert the expression back to $I_j$.  Thus the $u$
integration is well defined.

To show that the $X_1$ integration is well defined, suppose $Q\in M_{n-m+1,m-1}( F)$ has zero entries in its bottom
$j+1$ rows, so that its nonzero entries correspond to positions in $X_1$.  Adding $Q$ to $X$  likewise has the
effect of replacing the matrix $\ttwo{I_{m-1}}{Y}{}{I_{n+1-m}}$ in the  argument of $\phi$ by
\begin{equation}\label{approp21}
\aligned & \ttwo{I_{m-1}}{}{-Q}{I_{n-m+1}} \ttwo{I_{m-1}}{Y}{}{I_{n+1-m}}\ttwo{I_{m-1}}{}{Q}{I_{n-m+1}} \,  = \,
\ttwo{I_{m-1}}{}{-Q}{I_{n-m+1}}  \ttwo{I_{m-1}}{Y}{Q}{I_{n-m+1}}
\\ & \qquad = \, \ttwo{I_{m-1}}{Y}{}{I_{n-m+1}-QY} \, = \,
\ttwo{I_{m-1}}{Y}{}{I_{n-m+1}}\ttwo{I_{m-1}}{}{}{I_{n-m+1}-QY} ,
\endaligned
\end{equation}
since $YQ=0$.   With $u'=I_{n-m+1}-QY$, one has
$\ttwo{I_{m-1}}{}{}{u'}\ttwo{I_{m-1}}{}{X}{u}=\ttwo{I_{m-1}}{}{u'X}{u'u}$. As before, the change of variables
$X\mapsto u'X$  modifies rows of $X$ by adding multiplies of lower rows to them.  This change of variables can be
undone without changing the measure, and furthermore maps any fundamental domain for $X_1$ to another.  Another
application of  proposition~\ref{leftfunddomprop} shows that the   change of variables $u\mapsto (u')\i u$ also
maps any fundamental domain for $u\in U( F)\backslash U(\A)$ to another as well.  It also preserves the character
$\psi_N(u)$, because $u'$  -- an member of the derived subgroup $[U(\A),U(\A)]$ -- lies in the kernel of $\psi_N$.
Thus the integrands in each $I_j$ are well-defined on their regions of integration.

The group $N'$ is the semidirect product of the embedding of $U$ into the lower right corner of $GL(n)$,
 and the embedding of $M_{n-m,m-1}$ into $GL(n)$ given by $X_1\mapsto
\tthree{I_{m-1}}{}{}{X_1}{I_{n-m}}{}{}{}1$.  Hence the product of fundamental domains for $U( F)\backslash U(\A)$
and $M_{n-m,m-1}( F)\backslash M_{n-m,m-1}(\A)$ serves as a fundamental domain for $N'( F)\backslash N'(\A)$, and
the product of their Haar measures  is likewise the Haar measure on $N'$ which gives this quotient volume 1.
Therefore the integral $I_0$ reduces to the left hand side of (\ref{appprop4}). When $j=n-m$,
\begin{equation}\label{approp22}
\aligned &    I_{n-m} \ \ = \ \  \  \int_{\begin{smallmatrix}
y\,\in\,M_{m-1,n-m}( F\backslash \A) \\
X_2\,\in\, M_{n-m,m-1}(\A) \\
u\,\in\,U( F)\backslash U(\A)
\end{smallmatrix}}
  \phi\(
\tthree{I_{m-1}}{}{y}{}{1}{}{}{}{I_{n-m}}\! \ttwo{I_{m-1}}{}{\begin{smallmatrix}  X_2 \\  0
\end{smallmatrix}}u g\)\,\times \\ & \ \ \qquad \qquad \qquad \qquad \qquad\qquad \qquad \qquad
\qquad \qquad \qquad\times \ \overline{\psi_N(u)}\,du\,dX_2\,dy\,.
\endaligned
\end{equation}
Let us factor the last matrix in this expression as $\ttwo{I_{m-1}}{}{}u \ttwo{I_{m-1}}{}{\begin{smallmatrix}  X_2' \\
0
\end{smallmatrix}}{I_{n-m+1}}$, in which $u[\begin{smallmatrix}  X_2' \\  0
\end{smallmatrix}]=[\begin{smallmatrix}  X_2 \\  0
\end{smallmatrix}]$.  Both the change of variables $X_2\mapsto X_2'$, as well as its inverse,
involve  adding multiples of lower rows to higher ones, which does not alter the measure $dX_2$. Applying the same
considerations about product fundamental domains and measures for $N^\circ$ as observed for $N'$ at the beginning
of this paragraph,
 we see that (\ref{approp22})
is hence equal to the right hand side of (\ref{appprop4}).

To complete the proof  we will show that $I_j=I_{j+1}$, and afterwards argue the absolute convergence.
 Starting with
(\ref{appprop5}), we enlarge $y$ by adding a column to its left, denoted by $y_1$.  Let $Y=[0\ y]$ be as above, but
with this newly enlarged $y$. Let $Q$ now denote the $(n-m+1)\times(m-1)$ matrix which has all zeroes except for
its $n-m-j$-th row, which equals the row vector $q\in F^{m-1}$.  We have that $YQ=0$ and $QY$ is strictly upper
triangular, with its $(n-m-j,n-m-j+1)$-st entry equal to $qy_1$.  Using $\phi$'s left invariance under
$\ttwo{I_{m-1}}{}{Q}{I_{n-m+1}}$, we may rewrite $I_j$ in terms of its Fourier series expansion at $y_1=0$:
\begin{equation}\label{approp23}
 \aligned
&     \sum_{q\,\in\, F^{m-1}}\int_{\begin{smallmatrix}
y\,\in\,M_{m-1,j+1}( F\backslash \A) \\
X_1 \, \in \, M_{n-m-j,m-1}( F\backslash \A) \\
X_2\,\in\, M_{j,m-1}(\A) \\
u\,\in\,U( F)\backslash U(\A)
\end{smallmatrix}}
   \! \phi\(\!\ttwo{I_{m-1}}{}{Q}{I_{n-m+1}} \ttwo{I_{m-1}}{Y}{}{I_{n-m+1}} \!
\ttwo{I_{m-1}}{}{\begin{smallmatrix} X_1 \\ X_2 \\  0
\end{smallmatrix}}u g\)\,\times \\ & \ \ \qquad \qquad \qquad \qquad
\qquad\qquad \qquad \qquad \qquad \overline{\psi_N(u)}\,\overline{\psi(qy_1)}\,du\,dX_2\,dX_1\,dy\,.
\endaligned
\end{equation}
We calculate
\begin{equation}\label{approp24}
    \ttwo{I_{m-1}}{}{Q}{I_{n-m+1}} \ttwo{I_{m-1}}{Y}{}{I_{n-m+1}} \ \ = \ \
  \ttwo{I_{m-1}}{Y}{}{I_{n-m+1}} \ttwo{I_{m-1}}{}{Q}{I_{n-m+1}}  \ttwo{I_{m-1}}{}{}{u'},
\end{equation}
where $u'=I_{n-m+1}+QY\in U$. Therefore the argument of $\phi$ in (\ref{approp23}) is equal to
\begin{multline}\label{approp25}
\ttwo{I_{m-1}}{Y}{}{I_{n-m+1}} \ttwo{I_{m-1}}{}{Q}{I_{n-m+1}} \ttwo{I_{m-1}}{}{}{u'}
\ttwo{I_{m-1}}{}{\begin{smallmatrix} X_1 \\ X_2 \\  0 \end{smallmatrix}}u g \\
\ \ = \ \ \ttwo{I_{m-1}}{Y}{}{I_{n-m+1}} \ttwo{I_{m-1}}{}{Q}{I_{n-m+1}}
\ttwo{I_{m-1}}{}{u'\left[\begin{smallmatrix} X_1 \\ X_2 \\  0
\end{smallmatrix}\right]}{I_{n-m+1}} \ttwo{I_{m-1}}{}{}{u'u} g\,.
\end{multline}
Again, changing variables $u\mapsto (u')\i u$ maps any fundamental domain for $U( F)\backslash U(\A)$ into another,
and changes the character $\psi_N(u)$ to $\psi_N(u)\psi(qy_1)\i$ -- cf. proposition~\ref{leftfunddomprop}.  The
multiplication of the unipotent matrix $u'$ on $\left[\begin{smallmatrix} X_1 \\ X_2
\\  0
\end{smallmatrix}\right]$ serves to  add multiples of lower rows to a
higher rows.  Again, since $X_1$ and $X_2$ are integrated over abelian groups,
 this can be reversed by a change of variables which
does not destroy the fundamental domain for $X_1$ or change either Haar measure.
 After combining the sum over $q$ in (\ref{approp23}) with
the integration over the bottom row of $X_1$, so that it becomes an integration over $\A^{m-1}$ instead of $(
F\backslash \A)^{m-1}$, we have thus converted the expression for $I_j$ into that for $I_{j+1}$.

Finally, we  conclude by justifying the absolute convergence of all these integrals, as well as the manipulations
which formally relate them.  For this we shall use an argument due to Jacquet-Shalika (\cite[\S6.4]{jsextsq}) in
the context of exterior square $L$-functions. The above manipulations show that the absolute value of $I_j$ is
bounded by the analogous expression to (\ref{approp23}), but with absolute values around the integrand. That
expression is in turn bounded above by the expression for $I_{j+1}$ -- but again with absolute values around the
integrand -- because it is formed by combining a union of fundamental domains together. Thus each such expression
is absolutely convergent and all manipulations are justified, provided the final integral $I_{n-m}$, or
equivalently the integral in proposition~\ref{appprop}, is.  The absolute convergence of the $I_{n-m}$ integral
itself follows from the gauge estimates in \cite[\S5]{jacshaarch}.  Indeed, although the estimates there are for
the analogous local integrals instead,
 the mechanism
of bounding the unipotent integration by a rapidly decaying function of the unipotent variable applies equally to
the periods $V$, because of the expression for them as a sum of Whittaker functions mentioned just after the
statement of this proposition.
\end{proof}

\begin{bibsection}

\begin{biblist}

\bib{BHM}{article}{
   author={Blomer, Valentin},
   author={Harcos, Gergely},
   author={Michel, Philippe},
   title={Bounds for modular $L$-functions in the level aspect},
   language={English, with English and French summaries},
   journal={Ann. Sci. \'Ecole Norm. Sup. (4)},
   volume={40},
   date={2007},
   number={5},
   pages={697--740},
   issn={0012-9593},
   review={\MR{2382859 (2009g:11058)}},
   doi={10.1016/j.ansens.2007.05.003},
} 	

\bib{Casselman:1980}{article}{
     author={Casselman, W.},
      title={Jacquet modules for real reductive groups},
  booktitle={Proceedings of the International Congress of Mathematicians (Helsinki, 1978)},
      pages={557\ndash 563},
  publisher={Acad. Sci. Fennica},
      place={Helsinki},
       date={1980},
}

\bib{chm}{article}{
    author={Casselman, William},
    author={Hecht, Henryk},
    author={Mili{\v{c}}i{\'c}, Dragan},
     title={Bruhat filtrations and Whittaker vectors for real groups},
 booktitle={The mathematical legacy of Harish-Chandra (Baltimore, MD, 1998)},
    series={Proc. Sympos. Pure Math.},
    volume={68},
     pages={151\ndash 190},
 publisher={Amer. Math. Soc.},
     place={Providence, RI},
      date={2000},
}

\bib{cogdell}{article}{
   author={Cogdell, James W.},
   title={$L$-functions and converse theorems for ${\rm GL}\sb n$},
   conference={
      title={Automorphic forms and applications},
   },
   book={
      series={IAS/Park City Math. Ser.},
      volume={12},
      publisher={Amer. Math. Soc.},
      place={Providence, RI},
   },
   date={2007},
   pages={97--177},
   review={\MR{2331345 (2008e:11060)}},
}

\bib{cogps}{article}{
   author={Cogdell, J. W.},
   author={Piatetski-Shapiro, I. I.},
   title={Converse theorems for ${\rm GL}_n$. II},
   journal={J. Reine Angew. Math.},
   volume={507},
   date={1999},
   pages={165--188},
   issn={0075-4102},
   review={\MR{1670207 (2000a:22029)}},
}

\bib{DFI}{article}{
   author={Duke, W.},
   author={Friedlander, J. B.},
   author={Iwaniec, H.},
   title={The subconvexity problem for Artin $L$-functions},
   journal={Invent. Math.},
   volume={149},
   date={2002},
   number={3},
   pages={489--577},
   issn={0020-9910},
   review={\MR{1923476 (2004e:11046)}},
   doi={10.1007/s002220200223},
}

\bib{DuIwa}{article}{
   author={Duke, W.},
   author={Iwaniec, H.},
   title={Estimates for coefficients of $L$-functions. I},
   conference={
      title={Automorphic forms and analytic number theory},
      address={Montreal, PQ},
      date={1989},
   },
   book={
      publisher={Univ. Montr\'eal},
      place={Montreal, QC},
   },
   date={1990},
   pages={43--47},
   review={\MR{1111010 (92f:11068)}},
}

\bib{gli1}{article}{
   author={Goldfeld, Dorian},
   author={Li, Xiaoqing},
   title={Voronoi formulas on ${\rm GL}(n)$},
   journal={Int. Math. Res. Not.},
   date={2006},
   pages={Art. ID 86295, 25},
   issn={1073-7928},
   review={\MR{2233713 (2007f:11052)}},
}

\bib{gli2}{article}{
   author={Goldfeld, Dorian},
   author={Li, Xiaoqing},
   title={The Voronoi formula for ${\rm GL}(n,\R)$},
   journal={Int. Math. Res. Not.},
   date={2008}
}

\bib{gli3}{article}{
   author={Goldfeld, Dorian},
   author={Li, Xiaoqing},
   title={Addendum to: ``The Voronoi formula for ${\rm GL}(n,\Bbb R)$''
   [Int. Math. Res. Not. IMRN {\bf 2008}, no. 2, Art. ID rnm144, 39 pp.; MR
   2418857]},
   journal={Int. Math. Res. Not. IMRN},
   date={2008},
   pages={Art. ID rnn123, 1},
   issn={1073-7928},
   review={\MR{2448079 (2009i:11052)}},
}

\bib{ivic}{article}{
   author={Ivi{\'c}, Aleksandar},
   title={On the ternary additive divisor problem and the sixth moment of
   the zeta-function},
   conference={
      title={ theory},
      address={Cardiff},
      date={1995},
   },
   book={
      series={London Math. Soc. Lecture Note Ser.},
      volume={237},
      publisher={Cambridge Univ. Press},
      place={Cambridge},
   },
   date={1997},
   pages={205--243},
   review={\MR{1635762 (99k:11129)}},
}

\bib{jacshaarch}{article}{
   author={Jacquet, Herv{\'e}},
   author={Shalika, Joseph},
   title={Rankin-Selberg convolutions: Archimedean theory},
   conference={
      title={ sixtieth birthday, Part I},
      address={Ramat Aviv},
      date={1989},
   },
   book={
      series={Israel Math. Conf. Proc.},
      volume={2},
      publisher={Weizmann},
      place={Jerusalem},
   },
   date={1990},
   pages={125--207},
   review={\MR{1159102 (93d:22022)}},
}

\bib{jsextsq}{article}{
    author={Jacquet, Herv{\'e}},
    author={Shalika, Joseph},
     title={Exterior square $L$-functions},
 booktitle={Automorphic forms, Shimura varieties, and $L$-functions, Vol.\
            II (Ann Arbor, MI, 1988)},
    series={Perspect. Math.},
    volume={11},
     pages={143\ndash 226},
 publisher={Academic Press},
     place={Boston, MA},
      date={1990},
    }

\bib{jpss}{article}{
   author={Jacquet, Herv{\'e}},
   author={Piatetski-Shapiro, Ilja Iosifovitch},
   author={Shalika, Joseph},
   title={Automorphic forms on ${\rm GL}(3)$},
   journal={Ann. of Math. (2)},
   volume={109},
   date={1979},
   number={1},
   pages={169--258},
   issn={0003-486X},
   review={\MR{519356 (80i:10034a)}},
   doi={10.2307/1971270},
}

\bib{KMV}{article}{
   author={Kowalski, E.},
   author={Michel, P.},
   author={VanderKam, J.},
   title={Rankin-Selberg $L$-functions in the level aspect},
   journal={Duke Math. J.},
   volume={114},
   date={2002},
   number={1},
   pages={123--191},
   issn={0012-7094},
   review={\MR{1915038 (2004c:11070)}},
   doi={10.1215/S0012-7094-02-11416-1},
} 	

\bib{li}{article}{ author={Li, Xiaoqing},title={Bounds for $GL(3) \times GL(2)$ L-functions and GL(3)
L-functions},note={preprint},date={2008} }

\bib{MR2230922}{article}{
   author={Miller, Stephen D.},
   title={Cancellation in additively twisted sums on ${\rm GL}(n)$},
   journal={Amer. J. Math.},
   volume={128},
   date={2006},
   number={3},
   pages={699--729},
   issn={0002-9327},
   review={\MR{2230922 (2007k:11078)}},
}

\bib{inforder}{article}{
        author={Miller, Stephen D.},
        author={Schmid, Wilfried},
        title={Distributions and analytic continuation of Dirichlet series},
    journal={J. Funct. Anal.},
        volume={214},
        date={2004},
        number={1},
        pages={155\ndash 220},
        issn={0022-1236},
 }

\bib{voronoi}{article}{
   author={Miller, Stephen D.},
   author={Schmid, Wilfried},
   title={Automorphic distributions, $L$-functions, and Voronoi summation
   for ${\rm GL}(3)$},
   journal={Ann. of Math. (2)},
   volume={164},
   date={2006},
   number={2},
   pages={423--488},
   issn={0003-486X},
   review={\MR{2247965}},
}

\bib{sarnak}{article}{
   author={Sarnak, Peter},
   title={Estimates for Rankin-Selberg $L$-functions and quantum unique
   ergodicity},
   journal={J. Funct. Anal.},
   volume={184},
   date={2001},
   number={2},
   pages={419--453},
   issn={0022-1236},
   review={\MR{1851004 (2003c:11050)}},
   doi={10.1006/jfan.2001.3783},
}

\end{biblist}
\end{bibsection}

\vspace{1cm}
\begin{tabular}{lcl}
Stephen D. Miller                    & & Wilfried Schmid \\
Department of Mathematics         & & Department of Mathematics \\
Hill Center-Busch Campus          & & Harvard University \\
Rutgers, The State University of New Jersey             & & Cambridge, MA 02138 \\
 110 Frelinghuysen Rd             & & {\tt schmid@math.harvard.edu}\\
 Piscataway, NJ 08854-8019\\
 {\tt sdmiller@math.rutgers.edu}  \\

\end{tabular}

\end{document}